\documentclass[11pt]{amsart}

\usepackage[margin=1.25in]{geometry}
\usepackage{amsmath, amssymb, amsthm, graphicx, enumerate, tikz, float, color}
\usepackage{mathtools, comment}
\usepackage{wasysym}
\usepackage[misc]{ifsym}
\usepackage[colorlinks]{hyperref}
\usepackage{orcidlink}

\hypersetup{citecolor=blue}
\usetikzlibrary{matrix,arrows,decorations.pathmorphing}

\newtheorem{theorem}{Theorem}[section]
\newtheorem{lemma}[theorem]{Lemma}
\newtheorem{corollary}[theorem]{Corollary}
\newtheorem{proposition}[theorem]{Proposition}

\newtheorem*{thmA}{Theorem A}
\newtheorem*{thmB}{Theorem B}
\newtheorem*{thmC}{Theorem C}
\newtheorem{hyp}[theorem]{Hypothesis}

\theoremstyle{definition}
\newtheorem{definition}[theorem]{Definition}
\newtheorem{example}[theorem]{Example}

\theoremstyle{remark}

\DeclareMathOperator{\cd}{cd}
\DeclareMathOperator{\Irr}{Irr}

\numberwithin{equation}{section}

\newcommand{\SL}[2]{\Sigma_{#1,#2}^L} 
\newcommand{\SR}[2]{\Sigma_{#1,#2}^R} 
\newcommand{\SLR}[2]{\Sigma_{#1,#2}^*} 
\newcommand{\SLi}[3]{\Sigma_{#1,#2}^{#3L}} 
\newcommand{\SRi}[3]{\Sigma_{#1,#2}^{#3R}} 
\newcommand{\SZ}[2]{\Sigma_{#1,#2}^0} 

\makeatletter
\@namedef{subjclassname@2020}{%
  \textup{2020} Mathematics Subject Classification}
\makeatother

\begin{document}

\allowdisplaybreaks

\title[Prime character degree graphs within a family (iii)]{On prime character degree graphs occurring within a family of graphs (iii)}

\author[Bissler et al.]{Mark W. Bissler \orcidlink{0000-0002-0472-0334} | Thatcher Debowski \orcidlink{0009-0001-5430-6430} | Theodore F. Hoelker \orcidlink{0009-0009-7265-2079}\\ Jacob Laubacher \orcidlink{0000-0003-0045-7951} | Lorenzo Ravaglia \orcidlink{0009-0000-4891-4854} | G. Sivanesan \orcidlink{0000-0001-7153-960X}}
\address{Mark W. Bissler \orcidlink{0000-0002-0472-0334} | Department of General Education, Western Governors University | Salt Lake City, Utah 84107, USA}
\email{\textcolor{magenta}{mark.bissler@wgu.edu}}

\address{Thatcher Debowski \orcidlink{0009-0001-5430-6430}| Department of Mathematics, Hillsdale College | Hillsdale, Michigan 49242, USA}
\email{\textcolor{magenta}{tdebowski@hillsdale.edu}}

\address{Theodore F. Hoelker \orcidlink{0009-0009-7265-2079} | Department of Mathematics, Hillsdale College | Hillsdale, Michigan 49242, USA}
\email{\textcolor{magenta}{thoelker@hillsdale.edu}}

\address{Jacob Laubacher \orcidlink{0000-0003-0045-7951} | Department of Mathematics, Hillsdale College | Hillsdale, Michigan 49242, USA}
\email{\textcolor{magenta}{jlaubacher@hillsdale.edu}}

\address{Lorenzo Ravaglia \orcidlink{0009-0000-4891-4854} | Department of Mathematics, Hillsdale College | Hillsdale, Michigan 49242, USA}
\email{\textcolor{magenta}{lravaglia@hillsdale.edu}}

\address{G. Sivanesan \orcidlink{0000-0001-7153-960X} | Department of Mathematics, Government College of Engineering | Salem 636001, Tamil Nadu, India}
\email{\textcolor{magenta}{sivanesan@gceslem.edu.in}}

\date{\today}

\subjclass[2020]{Primary 20D10; Secondary 20C15, 05C75}

\keywords{Character degree graphs, solvable groups, families of graphs\\\indent\emph{Corresponding author.} Jacob Laubacher \Letter~\href{mailto:jlaubacher@hillsdale.edu}{jlaubacher@hillsdale.edu} \phone~(517) 607-3285.}

\begin{abstract}
We conclude the classification work done in the two previous papers of the same name. Here we add flexibility to the construction, thereby viewing the graphs in full generality. Our goal, as ever, is to determine which graphs do or do not occur as the prime character degree graph of a solvable group.
\end{abstract}

\maketitle

\section{Introduction}

Let $G$ be a finite solvable group, and denote $\Irr(G)$ as the set of irreducible characters of $G$. The set of character degrees of $G$ is then $\cd(G)=\{\chi(1)~|~\chi\in\Irr(G)\}$. Our main focus of study is the prime character degree graph of $G$, which is denoted as $\Delta(G)$. The vertex set of this simple graph is written as $\rho(\Delta(G))$ (or even $\rho(G)$ for short), and it consists of all the prime numbers that divide some character degree in $\cd(G)$. There is an edge between two vertices $p$ and $q$ in the graph $\Delta(G)$ if there exists some character degree $a\in\cd(G)$ such that $pq\mid a$. Notice that in this context, the words vertex and prime are synonymous. 

Prime character degree graphs have been a constant object of study, ranging through a wealth of topics like its metric dimension (see \cite{CSSTL}, for instance), disconnected components (like \cite{L} or \cite{LS}), regularity (see \cite{M2}), Eulerian paths (\cite{SST}), its diameter (see such papers like \cite{CDPS}, \cite{DHS}, \cite{L2}, \cite{L2.5}, and \cite{S}) associated eigenvalues (\cite{SS}), complete vertices (see \cite{M}), or even a broad overview (like \cite{L4}) or classic background (\cite{H2} and \cite{I}). Classifying whether a graph can or cannot occur as the prime character degree graph of a solvable group has been a point of interest, where we note that Zhang has cataloged full results for graphs with one, two, three, and four vertices in \cite{Z}. The graphs with five, six, seven, and eight vertices have all been studied in \cite{L3}, \cite{BLL}, \cite{LMS}, and \cite{LS2}, respectively.

More recently, work on families of graphs have been done, starting with the collections introduced in \cite{BL} and \cite{BLL2}. In fact, the object of investigation for the paper at hand is to finish the classification on the family of graphs first studied in \cite{LM} and then continued in \cite{DLM}. We recall the construction of the graph $\SLR{k}{n}$ in its full generality, where there are two main variants that we make note of: a ``left" version of the graph which we denote by $\SLi{k}{n}{m}$, and a ``right" version denoted by $\SRi{k}{n}{m}$. We then pick arbitrary integers $k$ and $m$ such that $k\geq1$ and $m\geq0$, and we enforce the restriction that $n$ is an integer such that $1\leq n\leq k$. Then, the graph $\SLR{k}{n}$ consists of three distinct subgraphs $A$, $B$, and $C$ which satisfy the following:
\begin{enumerate}[(i)]
    \item $A$ is a complete graph on the $k$ vertices $a_1, a_2, \ldots, a_k$,
    \item $B$ is a complete graph on the $k+n$ vertices $b_1, b_2, \ldots, b_{k+n}$,
    \item $C$ is a complete graph on the $m$ vertices $c_1, c_2, \ldots, c_m$,
    \item $\rho(A)$, $\rho(B)$, and $\rho(C)$ are all pairwise disjoint,
    \item there is an edge between $a_i$ and $b_i$ for all $1\leq i\leq k$,
    \item there is an edge between $a_i$ and $b_{k+i}$ for all $1\leq i\leq n$,
    \item there is an edge between $a_i$ and $c_j$ for all $1\leq i\leq k$ and $1\leq j\leq m$ in $\SLi{k}{n}{m}$,
    \item there is an edge between $b_i$ and $c_j$ for all $1\leq i\leq k+n$ and $1\leq j\leq m$ in $\SRi{k}{n}{m}$, and
    \item there are no other edges in $\SLR{k}{n}$ other than what is described above.
\end{enumerate}

The notation of $\SLR{k}{n}$, which is clunky and unwieldy at first glance, becomes natural and convenient in practice. Reading the graph from left to right, the subscript of $k$ always counts the number of ``$a$" vertices, and the subscript $n$ counts the number of ``one-to-two" edge-mappings spanning $A$ to $B$. The superscript of $L$ or $R$ dictate whether the ``$c$" vertices (of which there are $m$ of them) are attached to the left (read, $A$) or right ($B$) of the graph. Work on this family has been done before. In the first part of this paper (\cite{LM}), results were garnered specifically for the case of $m=1$ and associated solely for the left graphs. In other words, \cite{LM} focused on the family of graphs $\{\SLi{k}{n}{1}\}$. On the other hand, the second part (\cite{DLM}) yielded results for $\{\SRi{k}{n}{1}$\}.

The goal of this paper is to finish the classification for the family $\{\SLR{k}{n}\}$ in full generality by investigating the cases of $m=0$ and $m\geq2$. After providing all the necessary preliminaries in Section \ref{secpre}, we study the graph $\SLi{k}{n}{m}$ for $m\geq2$ in Section \ref{secleft} and the graph $\SRi{k}{n}{m}$ for $m\geq2$ in Section \ref{secright}. Notice that by taking $m=0$, we force $\rho(C)=\varnothing$, thereby having no ``$c$" vertices on the left nor right. We investigate this phenomenon in Section \ref{seczero}. We finish with three main results, which we label Theorem A, Theorem B, and Theorem C, that appear at the end of Section \ref{secleft}, Section \ref{secright}, and Section \ref{seczero}, respectively.

\section{Preliminaries}\label{secpre}

We present here the important relevant background information that is used most frequently in this paper. Our goal is to stay as self-contained as possible. We start, therefore, with a quick note about notation and convention. We will often be referring to a graph that that either can or cannot occur as the prime character degree graph of a solvable group. We occasionally abbreviate the above to instead say that a graph \emph{occurs} or \emph{does not occur}, respectively. Following this vein, to say that a graph is \emph{non-occurring} clearly implies it \emph{does not occur}. Further, we frequently mention removing a vertex $p$ and all incident edges from a graph $\Gamma$, which we notate as $\Gamma[p]$. Similarly, having an edge between $p$ and $q$ (denoted $\epsilon(p,q)$) removed from a graph $\Gamma$ can be written as $\Gamma[\epsilon(p,q)]$. Finally, we follow convention and use the notation of $K_n$ to refer to a complete graph on $n$ vertices.

We next address the landmark result of P\'alfy's, which is the most basic tool in the study of solvable groups. It is helpful in determining if a graph \emph{cannot} occur as the prime character degree graph of a solvable group.

\begin{lemma}[P\'alfy's condition from \cite{P}]\label{PC}
Let $G$ be a solvable group and let $\pi$ be a set of primes contained in $\Delta(G)$. If $|\pi|=3$, then there exists an irreducible character of $G$ with degree divisible by at least two primes from $\pi$. (In other words, any three vertices of the prime character degree graph of a solvable group span at least one edge.)
\end{lemma}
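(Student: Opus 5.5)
Since this is Pálfy's theorem from \cite{P}, quoted here only as a tool, I would reconstruct its proof by a minimal-counterexample argument. Suppose $G$ is a solvable group of least order with three primes $p,q,r\in\rho(G)$ such that no $\chi\in\Irr(G)$ has degree divisible by two of them. The first step is the reduction to quotients. For $N\trianglelefteq G$, $\Irr(G/N)\subseteq\Irr(G)$, so $\Delta(G/N)$ is a subgraph of $\Delta(G)$ on a subset of the vertices. By minimality, no proper quotient can contain all three primes in $\rho(G/N)$. In particular, for every minimal normal subgroup $V$, at least one of $p,q,r$ divides only degrees of characters not containing $V$ in their kernel. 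Moreover, if $G$ has two distinct minimal normal subgroups $V_1,V_2$, then $G$ embeds in $G/V_1\times G/V_2$, and degrees of a direct product multiply. A short argument then shows we may assume $G$ has a unique minimal normal subgroup $V$, which is an elementary abelian $s$-group since $G$ is solvable.

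The second step uses Itô's theorem. A prime $t\notin\rho(G)$ exactly when $G$ has a normal abelian Sylow $t$-subgroup. Applying this to suitable sections lets me pin down where the $p$-, $q$- and $r$-parts of $|G|$ must live. The target is the situation where $V=F(G)$ is self-centralizing, or at least where the Fitting subgroup is a $s$-group, so that $H=G/C_G(V)$ acts faithfully and completely reducibly on $V$. I would then use Clifford theory with Gallagher's theorem. For $1\neq\lambda\in\Irr(V)$ with stabilizer $I=I_G(\lambda)$, every $\chi\in\Irr(G\mid\lambda)$ has $|G:I|$ dividing $\chi(1)$. Thus orbit sizes of $H$ on the dual module $\widehat V$ all appear as divisors of character degrees. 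Characters of $I/V$ then supply further factors, since $\chi(1)=|G:I|\psi(1)$ for $\psi\in\Irr(I\mid\lambda)$.

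The crux, and the step I expect to be the main obstacle, is an orbit theorem for solvable linear groups. It says that if a solvable group $H$ acts faithfully and completely reducibly on a module $W$ in characteristic $s$, then for any two primes $t_1,t_2$ dividing $|H|$ there is a vector whose $H$-orbit has size divisible by $t_1t_2$. The only exceptions are small configurations that can be handled by hand. I would attack this by reducing, via Clifford's theorem and primitivity arguments, to $H$ primitive on $W$. In that case $H$ embeds in the semilinear group $\Gamma(s^n)$ modulo a normal extraspecial-type subgroup, and the orbit sizes can be computed directly; this is the Pálfy–Wolf analysis.

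Granting the orbit theorem, I would finish the case analysis. If two of $p,q,r$ divide $|H|$, the orbit theorem immediately produces an edge among them. Otherwise at most one of them divides $|H|$, and the remaining primes must enter $\rho(G)$ either through $s$ itself or through $\rho(I/V)$ for stabilizers $I$. An edge then arises by combining an orbit-size factor with a degree of $I/V$, using minimality applied to $I/V$ or to $G/V$. In every case we contradict the assumption that $p,q,r$ span no edge.
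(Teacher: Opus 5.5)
The paper gives no proof of this lemma; it imports it from P\'alfy \cite{P} as a black box. Your reconstruction therefore has to stand on its own, and it does not. The orbit theorem you identify as the crux is false, and its exceptions are not small configurations but infinite families inside the semilinear groups themselves. Let $H=\mathbb{F}_{s^N}^\times\rtimes\mathrm{Gal}(\mathbb{F}_{s^N}/\mathbb{F}_s)$ act on $W=\mathbb{F}_{s^N}$. The multiplicative group is already transitive on $W\setminus\{0\}$ and on the nontrivial characters of $W$. So the only orbit sizes are $1$ and $s^N-1$, and every prime dividing $N$ but not $s^N-1$ divides $|H|$ and no orbit size. The groups $G_m$ of Proposition~\ref{prop11mL} are exactly of this form: $p$ and $r_1$ both divide $|H|$, no orbit size is divisible by $pr_1$, and $p$ and $r_1$ are not adjacent in $\Delta(G_m)$. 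So your step ``if two of $p,q,r$ divide $|H|$, the orbit theorem immediately produces an edge'' is wrong. It breaks precisely in the semilinear case you planned to settle by direct computation. In such groups the primes dividing $|H:H\cap\mathbb{F}_{s^N}^\times|$ reach $\rho(G)$ only through $\cd(G/W)$, never through orbits.

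That throws the whole weight onto your last paragraph, which names no mechanism.
\begin{itemize}
\item An irreducible character of $G$ over $1\neq\lambda\in\Irr(V)$ has degree $|G:I|\psi(1)$ with $\psi\in\Irr(I\mid\lambda)$. Here $\psi(1)$ is a degree of $I/V$ only when $\lambda$ extends to $I$; in general these are projective degrees.
\item Nothing in your reduction prevents the unique minimal normal subgroup from being central. Then $H=G/C_G(V)=1$ and orbits say nothing at all.
\item Minimality cannot be applied to $G/V$, which by construction misses one of $p,q,r$. Edges of $\Delta(I/V)$ only transfer to $\Delta(G)$ (via Gallagher) when $\lambda$ extends.
\item You cannot simply arrange $C_G(V)=V=F(G)$. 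That needs $\Phi(G)=1$, but $\rho(G/\Phi(G))$ loses every prime $t$ for which $G$ has a normal nonabelian Sylow $t$-subgroup. This is why the paper itself rules out such Sylow subgroups before passing to $G/\Phi(G)$, and your sketch leaves that case untreated.
\end{itemize}
The missing idea is a way to control the primes of $\rho(G)$ that occur in no orbit size on $\widehat V$; that is where P\'alfy's argument does its real work.

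Your reduction to a unique minimal normal subgroup is salvageable, though not because degrees multiply in a subdirect product. Suppose $x\notin\rho(G/V_1)$ and $y\notin\rho(G/V_2)$. Choose $\alpha\in\Irr(G/V_2)$ with $x\mid\alpha(1)$ and $\beta\in\Irr(G/V_1)$ with $y\mid\beta(1)$. Let $X,Y$ be Sylow $x$- and $y$-subgroups, and restrict any constituent of $\alpha\beta$ to $XV_1$ and to $YV_2$. There $\beta$, respectively $\alpha$, becomes a sum of linear characters, which shows that $xy$ divides its degree.
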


Another way to process P\'alfy's condition is to recognize that a prime character degree graph corresponding to a finite solvable group has no triangle ($3$-cycle) in its complement graph. We note that in 2018 in \cite{A}, P\'alfy's condition was generalized to accommodate any odd cycle in the complement graph.

Our next tool, made formal in \cite{BLL2}, helps classify individual vertices of the graphs that we investigate.

\begin{definition}\label{adstrong}(\cite{BLL2})
Let $\Gamma$ be a graph and $p$ a vertex of $\Gamma$. Consider the following three conditions:
\begin{enumerate}[(i)]
    \item the subgraph of $\Gamma$ obtained by removing $p$ and all edges incident to $p$ is non-occurring,
    \item all of the subgraphs of $\Gamma$ obtained by removing one or more of the edges incident to $p$ are non-occurring,
    \item all of the subgraphs of $\Gamma$ obtained by removing $p$, the edges incident to $p$, and one or more of the edges between two adjacent vertices of $p$ are non-occurring.
\end{enumerate}
If $p$ satisfies conditions (i) and (ii), then $p$ is said to be an \textbf{admissible} vertex. If $p$ satisfies all three conditions, then $p$ is said to be a \textbf{strongly admissible} vertex of $\Gamma$.
\end{definition}

There are heavy consequences if every vertex is admissible (Theorem \ref{admissible} below), and albeit less substantial, there are still crucial results for our arguments if even a single vertex is admissible (see Lemma \ref{bigoh}) or strongly admissible (see Lemma \ref{strong}).

\begin{theorem}\label{admissible}\emph{(\cite{BLL2})}
Assume that $\Gamma$ is a non-empty graph. If $\Gamma$ is a graph in which every vertex is admissible, then $\Gamma$ is not $\Delta(G)$ for any solvable group $G$.
\end{theorem}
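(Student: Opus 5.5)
The plan is a minimal counterexample argument, passing to a normal subgroup of prime index-power and comparing its character degree graph with $\Gamma$.

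Suppose the statement fails. Among all solvable groups $G$ such that $\Delta(G)$ is a non-empty graph in which every vertex is admissible, choose one of minimal order, and put $\Gamma=\Delta(G)$. Note that $G\neq 1$.

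Since $G$ is solvable, $G'<G$. Choose a prime $p$ dividing $|G:G'|$, and let $N=O^p(G)$. Then $N$ is a proper normal subgroup of $G$, and $G/N$ is a nontrivial $p$-group. We want to show that $\Delta(N)$ is either $\Gamma$ itself or one of the graphs described in Definition \ref{adstrong}(i) or (ii) for the vertex $p$. The key input is Clifford theory. If $\chi\in\Irr(G)$ and $\theta$ is an irreducible constituent of $\chi_N$, then $\chi(1)/\theta(1)$ divides $|G:N|$, so it is a power of $p$. Conversely, every $\theta\in\Irr(N)$ lies under some $\chi\in\Irr(G)$. Hence for any primes $q,r\neq p$:
\begin{itemize}
\item $q\in\rho(N)$ if and only if $q\in\rho(G)$;
\item $qr$ divides some degree in $\cd(N)$ if and only if $qr$ divides some degree in $\cd(G)$.
\end{itemize}
Likewise, any edge of $\Delta(N)$ incident to $p$ is also an edge of $\Gamma$. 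So $\Delta(N)$ agrees with $\Gamma$ away from $p$, and exactly one of the following holds:
\begin{enumerate}[(a)]
\item $\Delta(N)=\Gamma$;
\item $p\notin\rho(N)$, so $\Delta(N)=\Gamma[p]$;
\item $p\in\rho(N)$ and $\Delta(N)$ is $\Gamma$ with one or more edges incident to $p$ removed.
\end{enumerate}

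We then dispose of the three cases.
\begin{itemize}
\item In case (a), $N$ is a smaller solvable group whose graph is non-empty with every vertex admissible, which contradicts the minimality of $|G|$.
\item In case (b), condition (i) for $p$ says that $\Gamma[p]$ is non-occurring, yet it equals $\Delta(N)$ with $N$ solvable. This is a contradiction.
\item In case (c), condition (ii) for $p$ gives the same contradiction.
\end{itemize}

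The main point to verify carefully is the Clifford-theoretic step, namely that passing to $N=O^p(G)$ only divides degrees by powers of $p$, so that no vertex or edge away from $p$ is lost. One degenerate situation also needs a check: if $\Gamma$ is a single vertex, then $\Gamma[p]$ is the empty graph, which occurs (for example, for abelian groups). So condition (i) already fails, such a $\Gamma$ has no admissible vertex, and the hypothesis excludes it. If the interpretation of ``removing edges'' is meant to include an isolated $p$ remaining, that situation is also covered by case (c).
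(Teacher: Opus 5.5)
Your argument is correct and is essentially the proof in \cite{BLL2}, which this paper only cites: a minimal counterexample, plus the Clifford-theoretic fact that a normal subgroup of $p$-power index can only lose the vertex $p$ or edges at $p$, so admissibility forces $O^p(G)=G$ for every $p\in\rho(G)$. That step is Lemma~\ref{bigoh}, which you have inlined, and the remaining possibility of a prime $p\notin\rho(G)$ dividing $|G:G'|$ gives $\Delta(O^p(G))=\Delta(G)$ and contradicts minimality, exactly as in your case (a).
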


\begin{lemma}\label{bigoh}\emph{(\cite{BLL2})}
Let $G$ be a solvable group, and suppose $p$ is an admissible vertex of $\Delta(G)$. For every proper normal subgroup $N$ of $G$, suppose that $\Delta(N)$ is a proper subgraph of $\Delta(G)$. Then $O^p(G)=G$.
\end{lemma}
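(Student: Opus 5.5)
The plan is to argue by contradiction. Suppose $N:=O^p(G)$ is a proper subgroup of $G$. Then $N$ is a proper normal subgroup of $G$ and $G/N$ is a nontrivial $p$-group. By hypothesis, $\Delta(N)$ is a proper subgraph of $\Delta(G)$. Since $N$ is solvable, $\Delta(N)$ occurs. The aim is to show that $\Delta(N)$ is one of the graphs that admissibility of $p$ declares non-occurring, namely $\Delta(G)[p]$ or a graph obtained from $\Delta(G)$ by deleting one or more edges incident to $p$.

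The first step is to compare $\Delta(N)$ with $\Delta(G)$ using Clifford theory. Let $\chi\in\Irr(G)$ and let $\theta\in\Irr(N)$ be a constituent of $\chi_N$. Then $\theta(1)$ divides $\chi(1)$, and $\chi(1)/\theta(1)$ divides $|G:N|$, which is a power of $p$. Moreover, every $\theta\in\Irr(N)$ lies under some $\chi\in\Irr(G)$. Three consequences follow.
\begin{enumerate}[(a)]
    \item Every vertex and edge of $\Delta(N)$ is a vertex and edge of $\Delta(G)$.
    \item Every prime $q\neq p$ in $\rho(G)$ lies in $\rho(N)$. Indeed, if $q\mid\chi(1)$ then $q\mid\theta(1)$, because the two degrees differ only by a power of $p$.
    \item Every edge $\epsilon(q,r)$ of $\Delta(G)$ with $q,r\neq p$ lies in $\Delta(N)$. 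Indeed, if $qr\mid\chi(1)$ then $qr\mid\theta(1)$.
\end{enumerate}
Hence $\Delta(N)$ and $\Delta(G)$ can differ only at the vertex $p$ and at edges incident to $p$.

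The second step is a case split on whether $p$ survives in $\Delta(N)$.
\begin{enumerate}[(i)]
    \item If $p\notin\rho(N)$, then by (a)--(c) we have $\Delta(N)=\Delta(G)[p]$. This graph is non-occurring by condition (i) of Definition \ref{adstrong}, contradicting that $N$ is solvable. This includes the degenerate case where $\Delta(G)[p]$ is empty, which condition (i) also declares non-occurring.
    \item If $p\in\rho(N)$, then $\Delta(N)$ has the same vertex set as $\Delta(G)$ and contains every edge not incident to $p$. Since $\Delta(N)$ is a proper subgraph, at least one edge incident to $p$ is missing. So $\Delta(N)$ is obtained from $\Delta(G)$ by removing one or more edges incident to $p$, and it is non-occurring by condition (ii) of Definition \ref{adstrong}. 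This again contradicts the solvability of $N$.
\end{enumerate}
Therefore $O^p(G)=G$.

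The only step that needs care is the Clifford-theoretic comparison in the first step. In particular, one must check that every prime $q\neq p$ and every edge avoiding $p$ genuinely transfers from $\Delta(G)$ down to $\Delta(N)$. Once that is in place, the case split is immediate.
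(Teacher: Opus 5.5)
Your proof is correct: with $N=O^p(G)$, the fact that $\chi(1)/\theta(1)$ divides the $p$-power $|G:N|$ shows that $\Delta(N)$ can differ from $\Delta(G)$ only by losing $p$ or losing edges incident to $p$. Conditions (i) and (ii) of admissibility then exclude every proper possibility. This paper quotes the lemma from \cite{BLL2} without proof, and your Clifford-theoretic comparison is the standard argument behind that result.
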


\begin{lemma}\label{strong}\emph{(\cite{BLL2})}
Let $G$ be a solvable group and assume that $p$ is a prime whose vertex is a strongly admissible vertex of $\Delta(G)$. For every proper normal subgroup $N$ of $G$, suppose that $\Delta(G/N)$ is a proper subgraph of $\Delta(G)$. Then a Sylow $p$-subgroup of $G$ is not normal.
\end{lemma}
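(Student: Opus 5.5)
The plan is to argue by contradiction, using a minimal counterexample. Suppose $G$ is a solvable group with $\Delta(G)$ having the strongly admissible vertex $p$, every proper normal subgroup $N$ having $\Delta(G/N)$ a proper subgraph of $\Delta(G)$, and $P\in\mathrm{Syl}_p(G)$ normal in $G$. I will derive a contradiction with one of conditions (i)--(iii) of Definition \ref{adstrong}. Throughout I use Itô's theorem and its refinements (Itô--Michler): $P$ is normal and abelian exactly when $p$ divides no character degree. Since $p\in\rho(G)$, $P$ is therefore nonabelian. I also use the standard fact that $\Delta(G/N)$ and $\Delta(N)$ are subgraphs of $\Delta(G)$ for $N\trianglelefteq G$.

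\textbf{Step 1.} Since $P\trianglelefteq G$, Schur--Zassenhaus gives a complement $H$ with $G=PH$, and $H\cong G/P$. I consider $\Delta(G/P')$ and $\Delta(G/P)$. Because $G/P$ is a $p'$-group, $p\notin\rho(G/P)$. Hence $\Delta(G/P)$ is a subgraph of $\Delta(G)[p]$, namely some subgraph of $\Gamma$ with $p$ removed.

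\textbf{Step 2.} I would show that every prime $q\neq p$ in $\rho(G)$ already lies in $\rho(G/P)$, and moreover that every edge between two such primes appears in $\Delta(G/P)$, except possibly edges between two neighbours of $p$. The idea is as follows. For $\chi\in\Irr(G)$, let $\theta$ be an irreducible constituent of $\chi_P$. Then $\chi(1)/\theta(1)$ divides $|G:P|$, and by Gallagher/Clifford theory its $p'$-part is governed by the degree of a projective character of the inertia group $I_G(\theta)/P$. If $\chi(1)$ is coprime to $p$, then $\theta$ is linear, so $\chi$ comes from $G/P'$. Since $P/P'$ is abelian and normal of $p$-power order with a $p'$-complement, the $p'$-degrees of $G/P'$ behave like those of the relevant quotient. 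This yields the claim for edges not involving neighbours of $p$.

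\textbf{Step 3.} Combining these, $\Delta(G/P)$ is $\Delta(G)[p]$ with possibly some edges between neighbours of $p$ deleted. If no edges are deleted, condition (i) is violated. If some are deleted, condition (iii) is violated, since then $\Delta(G/P)$, the graph of a solvable group, would be one of the listed non-occurring graphs. This is the contradiction. The minimality hypothesis on quotients, together with the observation that $\Delta(G/P)$ is a proper subgraph, lets me reduce to $G/P$ itself rather than needing an inductive argument. If instead I needed to handle $\Delta(G/P')$ containing $p$, I would apply condition (ii) to it: removing edges at $p$ gives a non-occurring graph, so all $p$-edges must be present, forcing $\Delta(G/P')=\Delta(G)$, which contradicts properness.

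\textbf{Main obstacle.} The hard step is Step 2: controlling which edges among primes other than $p$ survive in the quotient $G/P$. Degrees $\chi(1)$ divisible by $p$ may be the only witnesses of edges $q$--$r$, and then $q$ and $r$ must both be neighbours of $p$. To show that $q$ and $r$ themselves still survive as vertices, I would first try to use that a character $\chi$ with $qr\mid\chi(1)$ has $qr\mid |G:I_G(\theta)|\cdot(\text{projective degree})$. I would then argue via $G/P'$ and the action of $H$ on $\Irr(P/P')$, possibly invoking a result such as Gluck--Wolf type orbit arguments to produce a $p'$-degree divisible by $q$.
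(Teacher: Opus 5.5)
\textbf{There is a genuine gap, and it lies in your choice of quotient.} Your Step 2 claims two things:
\begin{enumerate}[(a)]
\item every $q\in\rho(G)\setminus\{p\}$ lies in $\rho(G/P)$;
\item every edge of $\Delta(G)[p]$ not joining two neighbours of $p$ survives in $\Delta(G/P)$.
\end{enumerate}
This is false in general. Take $G=\mathrm{SL}(2,3)\cong Q_8\rtimes C_3$ with $p=2$. Then $\cd(G)=\{1,2,3\}$, but $G/P\cong C_3$ is abelian, so the vertex $3$ disappears from $\Delta(G/P)$.

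The failure is exactly the step you waved through. A degree $\chi(1)$ with $p\nmid\chi(1)$ comes from a linear $\theta\in\Irr(P)$ and equals $|G:I_G(\theta)|\,\beta(1)$ with $\beta\in\Irr(I_G(\theta)/P)$. When $\theta\neq 1_P$, the orbit length $|G:I_G(\theta)|$ is invisible in $G/P$. So primes, and edges between primes not both adjacent to $p$, that are witnessed only by such orbit lengths vanish from $\Delta(G/P)$.

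Conditions (i) and (iii) of Definition~\ref{adstrong} only concern graphs with vertex set $\rho(\Delta(G))\setminus\{p\}$ in which only edges among neighbours of $p$ have been deleted. So Step 3 has nothing to contradict. The quotient hypothesis does not rescue it either: it only says $\Delta(G/P)$ is a proper subgraph, not that it is non-occurring. Also, your branch with $p\in\rho(G/P')$ never arises. Since $P/P'$ is an abelian normal Sylow $p$-subgroup of $G/P'$, It\^o's theorem gives $p\notin\rho(G/P')$.

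\textbf{The repair is to stop at $G/P'$, which is the route behind the cited lemma.}

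\emph{Edges.} You already observed that any $\chi$ with $p\nmid\chi(1)$ lies in $\Irr(G/P')$. Hence every edge of $\Delta(G)[p]$ not joining two neighbours of $p$ lies in $\Delta(G/P')$.

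\emph{Vertices.} Suppose $q\in\rho(G)\setminus\{p\}$ but $q\notin\rho(G/P')$, and let $Q\in\mathrm{Syl}_q(G)$.
\begin{itemize}
\item By It\^o--Michler, $QP'/P'$ is normal in $G/P'$. So $[Q,P]\le P'\le\Phi(P)$, and coprime action forces $[Q,P]=1$.
\item Then $Q\le T:=I_G(\theta)$ for every $\theta\in\Irr(P)$.
\item Since $P$ is a normal Hall subgroup of $T$, $\theta$ extends to $T$. Gallagher and Clifford then give every $\chi\in\Irr(G)$ degree $|G:T|\,\theta(1)\,\beta(1)$ with $\beta\in\Irr(T/P)$.
\item Here $q\nmid|G:T|$ because $Q\le T$, and $q\nmid\theta(1)$ because $\theta(1)$ is a power of $p$.
\item Also $q\nmid\beta(1)$: we have $q\notin\rho(G/P)$, and a subgroup of a group with a normal abelian Sylow $q$-subgroup has one too, so $q\notin\rho(T/P)$.
\end{itemize}
Hence $q\notin\rho(G)$, a contradiction.

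\emph{Conclusion.} $\Delta(G/P')$ is $\Delta(G)[p]$ with possibly some edges between neighbours of $p$ removed. Conditions (i) and (iii) declare such a graph non-occurring, yet $G/P'$ is solvable. This argument uses neither condition (ii) nor the hypothesis on proper quotients.
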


One of our strategies for eliminating graphs will be to first conclude that there are no normal nonabelian Sylow subgroups associated with some prime (vertex). In fact, we see that conclusion above for a strongly admissible vertex. However, if a vertex is not strongly admissible, we need a way to still garner that conclusion. One method to do this is to satisfy a five-part technical hypothesis in \cite{BLL3}, and another is to satisfy the lemma below.

\begin{lemma}\label{pi}\emph{(\cite{BLL2})}
Let $\Gamma$ be a graph satisfying P\'alfy's condition. Let $q$ be a vertex of $\Gamma$, and denote $\pi$ to be the set of vertices of $\Gamma$ adjacent to $q$, and $\rho$ to be the set of vertices of $\Gamma$ not adjacent to $q$. Assume that $\pi$ is the disjoint union of nonempty sets $\pi_1$ and $\pi_2$, and assume that no vertex in $\pi_1$ is adjacent in $\Gamma$ to any vertex in $\pi_2$. Let $v$ be a vertex in $\pi_2$ adjacent to an admissible vertex $s$ in $\rho$. Furthermore, assume there exists another vertex $w$ in $\rho$ that is not adjacent to $v$.

Let $G$ be a solvable group such that $\Delta(G)=\Gamma$, and assume that for every proper normal subgroup $N$ of $G$, $\Delta(N)$ is a proper subgraph of $\Delta(G)$. Then a Sylow $q$-subgroup of $G$ for the prime associated to $q$ is not normal.
\end{lemma}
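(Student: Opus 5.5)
We argue by contradiction. Suppose $\Gamma=\Delta(G)$ satisfies the hypotheses of the statement and a Sylow $q$-subgroup $Q$ of $G$ is normal. Since $q\in\rho(G)$, Ito's theorem makes $Q$ nonabelian. The plan is to show that the characters of $G$ lying over nonlinear characters of $Q$ only produce edges within $\{q\}\cup\pi$. We then push the remaining information into a proper normal subgroup or quotient, contradicting either the minimality assumption on $G$ or the admissibility of $s$.

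\emph{Step 1: degrees over nonlinear $\theta\in\Irr(Q)$.} Write $G=QH$ with $H$ a Hall $q'$-subgroup. Fix a nonlinear $\theta\in\Irr(Q)$ with stabilizer $I=I_G(\theta)$. Since $Q$ is a normal Sylow subgroup, $\theta$ extends to $I$. By Gallagher's theorem, the characters of $G$ over $\theta$ have degrees $\theta(1)\,\beta(1)\,|G:I|$ with $\beta\in\Irr(I/Q)$. Each such degree is divisible by $q$, so every prime dividing $|G:I|$ or some $\beta(1)$ lies in $\pi$. Because no vertex of $\pi_1$ is adjacent to a vertex of $\pi_2$, each of these $q'$-parts is either a $\pi_1$-number or a $\pi_2$-number. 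In particular, no prime of $\rho$ divides $|G:I_G(\theta)|$ for any nonlinear $\theta$, and no prime of $\rho$ divides $\cd(I_G(\theta)/Q)$.

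\emph{Step 2: locating the edges at $s$.} Every edge incident to $s$, in particular $\epsilon(v,s)$, must be realized by a character whose kernel contains $Q'$, because characters over nonlinear $\theta$ have degrees divisible by $q$, and $s$ is not adjacent to $q$. Hence $\Delta(G/Q')$ contains $s$, $v$, $w$ and the edge $\epsilon(v,s)$. Lemma \ref{bigoh} applies to the admissible vertex $s$, which gives $O^s(G)=G$. I would then use that $G/Q'$ has an abelian normal Sylow $q$-subgroup $Q/Q'$. This lets us study $H$ acting on $Q/Q'$ (Ito--Michler applied to the $q'$-part of $G/Q'$) to show that the primes in $\rho$, together with $v$, all appear already in $\Delta(HQ'/Q')\cong\Delta(H/(H\cap Q'))$.

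\emph{Step 3: the contradiction.} Combining Steps 1 and 2, the vertex $w$ and the edges at $s$ come from the $q'$-part, while $\pi_1$ and $\pi_2$ are forced to be mutually nonadjacent. The goal is to produce a proper normal subgroup $N$ of $G$ (a natural candidate is $N=O^{\sigma}(G)$ for a suitable set of primes $\sigma\subseteq\pi_1$, or the preimage of a normal Hall subgroup of $G/Q$) with $\Delta(N)=\Delta(G)$. This contradicts the hypothesis that $\Delta(N)$ is a proper subgraph of $\Delta(G)$ for every proper normal $N$. The vertex $w$ not adjacent to $v$ is used with P\'alfy's condition (Lemma \ref{PC}) on the triple $\{q,v,w\}$ or $\{s,w,u\}$ with $u\in\pi_1$, to force the required adjacencies inside $N$. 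If the argument instead produces the subgraph $\Gamma[s]$ or a graph obtained by deleting edges at $s$, admissibility of $s$ gives the contradiction directly.

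The main obstacle is Step 3, namely choosing the right proper normal subgroup (or quotient) and verifying that its graph is all of $\Gamma$, or else is one of the non-occurring subgraphs guaranteed by the admissibility of $s$. I would first try $N=O^{\pi_1}(G)$ together with the Gallagher description from Step 1.
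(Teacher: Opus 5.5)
\textbf{There is a genuine gap: the proposal sets up the situation correctly but never derives a contradiction.} The paper only quotes this lemma from \cite{BLL2}, so there is no in-paper proof to compare against; I am judging the proposal on its own.

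What you have is sound as set-up. Characters over a nonlinear $\theta\in\Irr(Q)$ have degree divisible by $q$. Their $q'$-parts are $\pi_1$-numbers or $\pi_2$-numbers. Hence every edge at $s$, and the vertex $w$, is realized in $\Irr(G/Q')$, and Lemma \ref{bigoh} gives $O^s(G)=G$.

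The reduction at the end of Step 2 is unsupported. Since $H\cap Q'=1$, $\Delta(HQ'/Q')$ is just $\Delta(G/Q)$. The degrees of $G/Q'=(Q/Q')\rtimes H$ have the form $|H:I_H(\lambda)|\,\beta(1)$ with $\lambda\in\Irr(Q/Q')$. The primes $s$, $v$, $w$ can come entirely from the orbit lengths $|H:I_H(\lambda)|$, which need not divide any degree of $H$; for abelian $H$, $\rho(H)$ is empty. Ito--Michler applied to $G/Q'$ cannot move a vertex of $\Delta(G/Q')$ into $\Delta(G/Q)$.

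Step 3, where the proof would actually happen, contains no argument. For $N=O^{\pi_1}(G)$ you give no reason that $G$ has a nontrivial $\pi_1$-quotient. Even if it does, Clifford theory only says that $\chi(1)/\psi(1)$ divides the $\pi_1$-number $|G:N|$. So edges with an endpoint in $\pi_1$, for instance those joining $q$ to $\pi_1$, may disappear in $\Delta(N)$, and $\Delta(N)=\Delta(G)$ is not available. The same objection applies to preimages of normal Hall subgroups of $G/Q$.

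The hypothesis that $w$ is not adjacent to $v$ is also never really used. Both P\'alfy triples you name are vacuous. In $\{q,v,w\}$, $q$ is already adjacent to $v$. In $\{s,w,u\}$, $s\sim w$ already follows from P\'alfy's condition on $\{q,s,w\}$, since $q$ is adjacent to neither $s$ nor $w$.

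Under the stated hypotheses, the only contradictions available are a proper normal subgroup $N$ with $\Delta(N)=\Gamma$, or $O^s(G)<G$ against Lemma \ref{bigoh}. The missing idea is a concrete mechanism that takes the normality of $Q$, the edge $\epsilon(v,s)$, and the non-edge between $v$ and $w$, and produces one of these two outcomes. As written, the admissibility of $s$ is used only to obtain $O^s(G)=G$, and nothing in your argument then contradicts it. You would need to supply that mechanism explicitly for this to become a proof.
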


We next recall a result from \cite{BLL2} which allows us a final contradiction to help eliminate a graph. Notice the use of admissible vertices.

\begin{proposition}\label{final}\emph{(\cite{BLL2})}
Let $\Gamma$ be a graph satisfying P\'alfy's condition with $n\geq5$ vertices. Also, assume there exist distinct vertices $a$ and $b$ of $\Gamma$ such that $a$ is adjacent to an admissible vertex $c$, $b$ is not adjacent to $c$, and $a$ is not adjacent to an admissible vertex $d$.

Let $G$ be a solvable group and suppose for all proper normal subgroups $N$ of $G$ we have that $\Delta(N)$ and $\Delta(G/N)$ are proper subgraphs of $\Gamma$. Let $F$ be the Fitting subgroup of $G$ and suppose that $F$ is minimal normal in $G$. Then $\Gamma$ is not the prime character degree graph of any solvable group.
\end{proposition}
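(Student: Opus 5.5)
The plan is to argue by contradiction. Suppose $\Gamma=\Delta(G)$ with $G$ as in the statement, and put $V=F$. Since $F$ is minimal normal in the solvable group $G$, it is an elementary abelian $r$-group for some prime $r$. We have $\Phi(G)\le F$. If $\Phi(G)=F$, then $G/\Phi(G)$ would have trivial Fitting subgroup, which is impossible, so $\Phi(G)<F$, and minimality of $F$ gives $\Phi(G)=1$. Gaschütz's theorem then gives a complement, $G=H\ltimes V$ with $H\cong G/F$. Because $C_G(F)=F$ for the Fitting subgroup of a solvable group, $H$ acts faithfully and irreducibly on $V$, and hence also on $\widehat V=\Irr(V)$.

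Clifford theory and Ito's theorem then describe the degrees:
\[
\cd(G)=\cd(H)\cup\{\,|H:I_H(\lambda)|\,\psi(1) : 1\ne\lambda\in\widehat V,\ \psi\in\Irr(I_H(\lambda))\,\}.
\]
Every degree divides $|G:F|=|H|$, so $\rho(G)\subseteq\pi(|H|)$. By hypothesis $\Delta(H)=\Delta(G/F)$ is a proper subgraph of $\Gamma$, so the orbit part of the formula must supply either a vertex or an edge that $\Delta(H)$ lacks.

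The next step uses the two admissible vertices. Apply Lemma \ref{bigoh} to $c$ and to $d$ (the hypothesis on $\Delta(N)$ is assumed) to get $O^c(G)=O^d(G)=G$. I would also try the quotient-type argument of Lemma \ref{strong}, via the structure of $G=H\ltimes V$, to rule out normal Sylow $c$- and $d$-subgroups of $H$. Since $\Delta(H)$ is an occurring proper subgraph, admissibility of $c$ should force $\Delta(H)$ to differ from $\Gamma$ at some vertex or edge other than those at $c$, because removing $c$ or any of its edges gives a non-occurring graph; the same holds for $d$. The aim is to show that $a$ and $c$ are joined in $\Gamma$ only through a degree of the form $|H:I_H(\lambda)|\psi(1)$, and that the orbit sizes of $H$ on $\widehat V$ are tightly constrained.

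The main case split is along the Manz--Wolf dichotomy for a faithful irreducible module of a solvable group.

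\begin{enumerate}[(1)]
\item \emph{Semilinear case, $H\le\Gamma(V)=\Gamma L(1,r^n)$.} Here $H$ is metacyclic, and the known descriptions of $\Delta$ for such groups apply. Such graphs have at most two complete blocks with a very restricted overlap. With $|\rho(\Gamma)|\ge5$ and P\'alfy's condition (Lemma \ref{PC}), this should contradict the configuration in the statement: $a\sim c$, $b\not\sim c$, $a\not\sim d$, with $c,d$ admissible.
\item \emph{Non-semilinear case.} Here I would use the orbit theorems, such as the existence of a regular orbit of a Sylow subgroup or of large orbits (Manz--Wolf, Yang). The goal is a $\lambda$ whose orbit size $|H:I_H(\lambda)|$ is divisible by both $a$ and $d$, or by both $b$ and $c$. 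Either conclusion would create an edge $\Gamma$ does not have.
\end{enumerate}

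I expect the main obstacle to be step (2): turning the orbit-size information into a specific forbidden edge. To do this I would first show, using $O^c(G)=G$ and the non-normality of the Sylow $d$-subgroup, that both $c$ and $d$ divide some nontrivial orbit size. I would then compare the stabilizers of orbits realizing $\epsilon(a,c)$ with a $\lambda$ whose stabilizer has $d$-power index, aiming to force $a$ and $d$ into a common degree.
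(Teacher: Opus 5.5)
Your reductions are sound: $\Phi(G)=1$, $G=H\ltimes V$ with $V=F$ faithful and irreducible, the Clifford--Gallagher description of $\cd(G)$, $\rho(G)\subseteq\pi(|H|)$, and $O^c(G)=O^d(G)=G$ via Lemma~\ref{bigoh}. The paper only imports this proposition from \cite{BLL2} and gives no proof, so I am judging your sketch on its own. Nothing after the reductions is an argument, and the one concrete reason you give in case (1) is false. Take the union of two cliques on at least five vertices, with $a,c$ only in the first and $b,d$ only in the second. It satisfies P\'alfy's condition and exactly the pattern $a\sim c$, $b\not\sim c$, $a\not\sim d$. So the shape of a semilinear graph yields no contradiction.

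Case (1) is closed instead by $O^c(G)=O^d(G)=G$, which you derive but never use. Put $C=H\cap\Gamma_0(V)$. Since $G/VC\cong H/C$ is cyclic, $O^c(G)=O^d(G)=G$ forces $c,d\nmid|H:C|$. As $c,d\in\pi(|H|)$, both divide $|C|$. Because $C$ acts semiregularly on $\widehat V\setminus\{1\}$, each $I_H(\lambda)$ with $\lambda\neq1$ embeds in $H/C$ and is cyclic. Hence the degrees lying over $\lambda\neq1$ are exactly the orbit sizes $|H:I_H(\lambda)|$, all divisible by $|C|$ and so by $cd$. Meanwhile every degree of $H$ divides $|H:C|$ (Ito), so it is prime to $c$. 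The edge $a$--$c$ must therefore come from an orbit size, which is also divisible by $d$, giving $a\sim d$, a contradiction. Separately, Lemma~\ref{strong} is unavailable: $c,d$ are only admissible, and that lemma concerns Sylow subgroups of $G$, not of $H$. In any case $G$ has no nontrivial normal $p$-subgroup for $p\neq r$ here.

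Case (2) carries the real content, and you only state it as an aim. You cite no result that produces an orbit size divisible by a prescribed pair of primes such as $\{a,d\}$ or $\{b,c\}$. Sylow regular-orbit theorems control one prime at a time, and the action need not be coprime. Also, an edge like $a$--$c$ may come from $\cd(H)$ rather than from an orbit.

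Moreover, ``non-semilinear'' conflates two different situations.
\begin{enumerate}[(i)]
\item If $V$ is quasi-primitive with $H\not\le\Gamma(V)$, the natural tool is the Manz--Wolf regular-orbit theorem. Apart from a finite list of small modules, $H$ has a regular orbit on $\widehat V$, so $|H|\in\cd(G)$ and $\Delta(G)$ is complete, contradicting $b\not\sim c$. The exceptional modules must then be checked directly. This is presumably where the hypothesis $|\rho(\Gamma)|\ge5$ enters, and your sketch never uses it.
\item If $V$ is not quasi-primitive, you need a separate argument based on how $H$ permutes the homogeneous components of $V$ restricted to a suitable normal subgroup.
\end{enumerate}
Until both subcases are carried out, the proposal does not prove the statement.
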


In our study of the family of graphs $\{\SLR{k}{n}\}$, we frequently rely on results from other previously investigated collections of graphs. Chief among them is the family of graphs introduced in \cite{BL} denoted by $\{\Gamma_{k,t}\}$, whose construction is as follows:

Take complete graphs $A$ and $B$ such that $A$ has $k$ vertices $a_1, a_2, \ldots, a_k$ and $B$ has $t$ vertices $b_1, b_2, \ldots, b_t$. Without loss of generality, we take $k\geq t$. Furthermore, set $\rho(\Gamma_{k,t})=\rho(A)\cup\rho(B)$ where we note that we can always enforce $\rho(A)\cap\rho(B)=\varnothing$. There is an edge in $\Gamma_{k,t}$ between vertices $p$ and $q$ if
\begin{enumerate}[(i)]
    \item $p,q\in\rho(A)$,
    \item $p,q\in\rho(B)$, or
    \item $p=a_i\in\rho(A)$ and $q=b_i\in\rho(B)$ for some $1\leq i\leq t$.
\end{enumerate}

\begin{theorem}\label{KT}\emph{(\cite{BL})}
The graph $\Gamma_{k,t}$ occurs as the prime character degree graph of a solvable group precisely when $t=1$ or $k=t=2$. Otherwise $\Gamma_{k,t}$ does not occur as the prime character degree graph of any solvable group, nor does any proper connected subgraph of $\Gamma_{k,t}$ with the same vertex set whenever $k\geq t\geq2$.
\end{theorem}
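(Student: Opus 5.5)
The plan is to split the statement into the occurrence half, handled by explicit constructions, and the non-occurrence half, handled by induction on $k+t$ via Theorem \ref{admissible}. For occurrence, the main tool is that $\Delta(G_1\times G_2)$ is the join of $\Delta(G_1)$ and $\Delta(G_2)$ whenever $\rho(G_1)\cap\rho(G_2)=\varnothing$.

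\textbf{The case $t=1$.} Here $a_1$ is adjacent to every other vertex, and $\Gamma_{k,1}[a_1]$ is the disjoint union of $K_{k-1}$ and the isolated vertex $b_1$. So $\Gamma_{k,1}$ is the join of $K_1$ with a disconnected graph whose components are $K_{k-1}$ and $K_1$. I would take a nonabelian group with $\cd=\{1,a_1\}$ and a solvable group whose graph is disconnected with components $K_{k-1}$ and $K_1$. The latter comes from Lewis' classification of disconnected graphs; one component of arbitrary size paired with an isolated vertex is realized there. I would choose the primes so the two groups have disjoint vertex sets and take the direct product.

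\textbf{The case $k=t=2$.} Here $\Gamma_{2,2}$ is the $4$-cycle $a_1a_2b_2b_1$. This is the join of the empty graph on $\{a_1,b_2\}$ with the empty graph on $\{a_2,b_1\}$. I would take two groups such as $S_4$ (with $\cd(S_4)=\{1,2,3\}$) and a Frobenius-type group with $\cd=\{1,r,s\}$ on two new primes, and take their direct product.

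\textbf{Non-occurrence for $k\ge t\ge 2$, $(k,t)\neq(2,2)$.} I would prove simultaneously, by induction on $k+t$, that $\Gamma_{k,t}$ and every proper connected spanning subgraph of it are non-occurring. The aim is to show every vertex is admissible and then apply Theorem \ref{admissible}.

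For condition (ii) of Definition \ref{adstrong}, deleting edges at a vertex yields one of three things.
\begin{enumerate}[(1)]
\item A disconnected graph. Solvable disconnected graphs have two complete components, which fails here whenever $t\ge2$, since some $a_i$--$b_i$ edge forces non-completeness or Pálfy's condition fails.
\item A connected proper spanning subgraph, covered by the induction hypothesis.
\item A graph with an independent triple, ruled out by Lemma \ref{PC}. For instance, deleting $a_1a_2$ makes $\{a_1,a_2,b_j\}$ independent for a suitable $j$.
\end{enumerate}

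For condition (i), deleting a vertex gives either $\Gamma_{k-1,t}$ or $\Gamma_{k-1,t-1}$ (after relabeling). Alternatively it gives a graph of the same type where some $b$ loses its $A$-neighbour, making it a proper spanning subgraph of a smaller $\Gamma$. The induction applies unless we land on $\Gamma_{2,2}$ or a $t=1$ graph. Those exceptional deletions occur only for small parameters, namely $(k,t)\in\{(3,2),(3,3),(2,\cdot)\}$.

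\textbf{Main obstacle.} The hard part is the base cases $\Gamma_{3,2}$ and $\Gamma_{3,3}$ and their spanning subgraphs, where some vertex deletions produce occurring graphs ($C_4$ or $\Gamma_{k,1}$). There Theorem \ref{admissible} does not apply directly. For these I would take a minimal counterexample $G$, so that proper normal subgroups and quotients have proper subgraphs.
\begin{enumerate}[(a)]
\item Use the vertices that are admissible (those whose deletion leaves a Pálfy-violating or inductively excluded graph) with Lemma \ref{bigoh} to get $O^p(G)=G$.
\item Use Lemma \ref{strong} or Lemma \ref{pi} to show the relevant Sylow subgroups are not normal.
\item Deduce that the Fitting subgroup $F$ is minimal normal.
\item Conclude via Proposition \ref{final}, with $a=a_1$, $c=b_1$, $b=a_2$-side vertices and $d$ chosen in $B$ away from $a_1$. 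This needs at least $5$ vertices, which holds for $(3,2)$ and larger.
\end{enumerate}
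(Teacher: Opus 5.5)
\textbf{There is a genuine gap in the non-occurrence half, in the $t=2$ family.} The paper gives no proof of this theorem (it is quoted from \cite{BL}), so your argument has to stand on its own. Your occurrence half is fine. The problem is your bookkeeping for condition (i). Deleting one vertex cannot lower both parameters. Deleting $b_i$ ($i\le t$) from $\Gamma_{k,t}$ gives $\Gamma_{k,t-1}$, and for $t=2$ that is $\Gamma_{k,1}$, which \emph{occurs}. So $b_1$ and $b_2$ fail condition (i) of Definition \ref{adstrong} in $\Gamma_{k,2}$ for \emph{every} $k\ge 3$, not only for small parameters, and Theorem \ref{admissible} never applies to this family.

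Condition (i) for $b_3$ in $\Gamma_{k,3}$ already requires $\Gamma_{k,2}$ to be non-occurring. So the whole family $\{\Gamma_{k,2}\}_{k\ge3}$ is the real base of your induction. It needs the minimal-counterexample machinery for all $k$, presumably by induction on $k$, not just for $(3,2)$. Your sketch of that machinery also needs repair:
\begin{enumerate}[(1)]
\item In $\Gamma_{k,2}$ the only admissible vertices are $a$-vertices, so your choice $c=b_1$, $d\in B$ in Proposition \ref{final} is not allowed. Something like $a=b_1$, $b=b_2$, $c=a_1$, $d=a_2$ works instead.
\item The normal-Sylow step for $b_1$ (and symmetrically $b_2$) has to go through Lemma \ref{pi}: split its neighbourhood as $\{a_1\}\cup\{b_2\}$, with $s=a_2$ and $w=a_3$.
\item For $k=3$, the vertex $a_3$ is not admissible (its deletion gives $\Gamma_{2,2}$), and its two neighbours are adjacent. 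So neither Lemma \ref{strong} nor Lemma \ref{pi} covers it.
\item For $t=2$, deleting $a_1a_2$ creates no independent triple, so Lemma \ref{PC} does not exclude it. You need the $5$-cycle $a_1,a_2,b_1,a_3,b_2$ in the complement together with \cite{A}.
\end{enumerate}

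\textbf{Your treatment of condition (ii) is circular.} The graphs obtained by deleting some edges $a_ib_i$ are spanning subgraphs of the \emph{same} $\Gamma_{k,t}$. They have $k+t$ vertices, so an induction on $k+t$ does not cover them. You need an inner induction on the number of $A$--$B$ edges.

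Its base case, with a single edge $a_ib_i$ joining $K_k$ and $K_t$, cannot be handled by admissibility. Deleting $a_i$ there leaves the disconnected graph $K_{k-1}\sqcup K_t$, which can occur; for instance $K_2\sqcup K_3$ does. So this base must be excluded directly, since it has two cut vertices (\cite{MQ}) and diameter three (Theorem \ref{diameter}). Only after that can the subgraphs with two or more matching edges be handled vertex by vertex, using the $t=2$ result and the outer induction.
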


Our final frequently-used tool helps tame graphs with a diameter of three. The following, presented in \cite{S}, allows us to partition the vertex set $\rho(\Gamma)$ of a diameter three graph $\Gamma$ into four nonempty pairwise disjoint sets $\rho_1$, $\rho_2$, $\rho_3$, and $\rho_4$. We refer the reader to \cite{S} for more subtle detail, but one can always find a labeling that guarantees the following:

\begin{theorem}\label{diameter}\emph{(\cite{S})}
Let $G$ be a solvable group where $\Delta(G)$ has diameter three. One then has the following:
\begin{enumerate}[(i)]
    \item\label{diam1} $|\rho_3|\geq3$,
    \item\label{diam2} $|\rho_1\cup\rho_2|\leq|\rho_3\cup\rho_4|$,
    \item\label{diam3} if $|\rho_1\cup\rho_2|=n$, then $|\rho_3\cup\rho_4|\geq2^n$, and
    \item\label{diam4} $G$ has a normal Sylow $p$-subgroup for exactly one prime $p\in\rho_3$.
\end{enumerate}
\end{theorem}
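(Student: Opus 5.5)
The plan is to follow the strategy of \cite{S}, building on Lewis's structural analysis of solvable groups whose degree graph has diameter three. Let $G$ be solvable with $\Delta(G)$ of diameter three, and take $G$ minimal (no proper normal subgroup or quotient realizing the same graph) where this is harmless.

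\emph{Step 1 (the normal Sylow subgroup).} Fix vertices $p_1,p_4$ at distance three. Using Ito--Michler, Gallagher's theorem and Clifford theory, show that $G$ has a normal nonabelian Sylow $p$-subgroup $P$ for some prime $p$. If no Sylow subgroup were normal and nonabelian, $G/F(G)$ would act on $F(G)$ with enough regular orbits (via Pálfy/Wolf orbit results). The resulting character degrees would then join every pair of vertices by a path of length at most two, contradicting diameter three. Uniqueness of such a prime then gives part~(\ref{diam4}).

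\emph{Step 2 (defining the partition).} Let $F=F(G)$. Define the four sets as follows:
\begin{itemize}
\item $\rho_3$ is the set of primes dividing $|G:F|$ that lie in the component of characters lying over nonlinear characters of $P$;
\item $\rho_4$ is the remaining vertices on that side;
\item $\rho_1,\rho_2$ are the vertices on the ``far'' side, at distance three and two from $\rho_4$ respectively.
\end{itemize}
Show that $p\in\rho_3$, which completes part~(\ref{diam4}). Next show that $G/F$ acts on $V=P/P'$ (or on a chief factor of it) as a subgroup of a semilinear group $\Gamma(p^a)$. The degrees coming from $\operatorname{Irr}(G/P')$ then produce the $\rho_1\cup\rho_2$ part of the graph. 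The degrees from characters over nonlinear $\theta\in\operatorname{Irr}(P)$ produce a clique containing $\rho_3\cup\rho_4$.

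\emph{Step 3 (counting).} Let $n=|\rho_1\cup\rho_2|$. These primes divide $|\Gamma(p^a):\Gamma_0(p^a)|$, the Galois part, so $a$ is divisible by a product of $n$ distinct primes. The primes in $\rho_3\cup\rho_4$ include Zsigmondy prime divisors of $p^{d}-1$ for each divisor $d$ of $a$ built from subsets of those $n$ primes. Such prime divisors must appear as degree divisors because the corresponding orbit sizes are forced. Distinct subsets give distinct Zsigmondy primes, which yields $|\rho_3\cup\rho_4|\geq 2^n$, i.e.\ part~(\ref{diam3}). Part~(\ref{diam2}) follows since $2^n\geq n$.

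\emph{Step 4 (the bound on $\rho_3$).} For part~(\ref{diam1}), apply Pálfy's condition (Lemma~\ref{PC}) together with the orbit analysis to show that $\rho_3$ contains $p$ and at least two further Zsigmondy-type primes. The expected main obstacle is Step 3: showing that each Zsigmondy prime genuinely divides a character degree. This needs careful Clifford correspondence over non-regular orbits, and the Zsigmondy exceptions $p^6-1$ with $p=2$, and $p^2-1$ with $p$ Mersenne, must be handled separately.
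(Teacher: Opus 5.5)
The paper does not prove this statement; it quotes it from \cite{S}. Your sketch therefore has to stand on its own, and it does not yet. The decisive gap is Step~1. The existence of a normal nonabelian Sylow $p$-subgroup, together with the uniqueness of $p$, is the main theorem of \cite{S}, and what you give for it is not an argument. No P\'alfy--Wolf type orbit theorem guarantees regular orbits for an arbitrary solvable linear group. For instance, $\Gamma_0(p^a)$ is transitive on the nonzero vectors of $\mathrm{GF}(p^a)$, so any subgroup of $\Gamma(p^a)$ that contains it and has nontrivial Galois part has no regular orbit at all; actions of this kind are exactly what occur in the known diameter-three examples. You never say how the resulting degrees would join every two vertices by a path of length at most two, and you simply assert that $p$ is unique. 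Moreover, (iv) is a statement about the given $G$. Proving it for a minimal group with the same graph does not transfer back, so the minimality reduction is not harmless there.

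Steps 2--4 also have concrete problems.

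\emph{Step~2.} You put into $\rho_3$ only primes dividing $|G:F|$. But a normal Sylow $p$-subgroup lies in $F$, so $p\nmid|G:F|$, and by your own definition $p\notin\rho_3$, which contradicts (iv). More importantly, the $\rho_i$ in the statement are graph-theoretic, and so is every use of them in this paper (e.g.\ reading off $|\rho_3|=1$ for $\SLi{2}{1}{(t+1)}[a_1]$). Since $\Delta(G)$ satisfies Lemma~\ref{PC} and has diameter three, it is a union of two cliques. The sets $\rho_2$ and $\rho_3$ are the vertices having a neighbour in the opposite clique, and $\rho_1,\rho_4$ are the rest. You must prove that your group-theoretically defined sets coincide with these. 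In particular, Step~3 needs every prime of $\rho_1\cup\rho_2$ to divide $a$, and this is never argued.

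\emph{Step~3.} The count is off by one. The empty subset gives $d=1$, and $p-1$ has no prime divisor when $p=2$; this case really occurs, e.g.\ in Lewis's example over $\mathrm{GF}(2^{15})$. The correct bookkeeping is $2^n-1$ primitive prime divisors, one for each nonempty subset, plus $p$ itself. You still have to show that each of these primitive prime divisors actually divides a character degree and lands in $\rho_3\cup\rho_4$. You name this as the main obstacle but do not resolve it. You also have to treat $d=2$ with $p$ a Mersenne prime.

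\emph{Step~4.} There is no argument for $|\rho_3|\geq 3$. You need at least two primes other than $p$ on that side that are adjacent to a vertex of $\rho_2$, and nothing in Steps~1--3 controls adjacency between the two cliques.
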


\section{The Left Family}\label{secleft}

Our aim in this section is to classify the family $\{\SLi{k}{n}{m}\}$. Work on this family has been done in \cite{LM}, namely on the graphs $\SL{k}{n}$. One can see the result below:

\begin{theorem}\label{Lfam}\emph{(\cite{LM})}
The graph $\SL{k}{n}$ occurs as the prime character degree graph of a solvable group when $(k,n)=(1,1)$. Otherwise, $\SL{k}{n}$ does not occur as the prime character degree graph of any solvable group.
\end{theorem}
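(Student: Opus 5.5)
The plan has two parts: exhibit a group for $(k,n)=(1,1)$, and rule out every other case by a minimal counterexample argument built on admissible vertices.

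\emph{The case $(k,n)=(1,1)$.} The graph $\SL{1}{1}$ has vertices $a_1,b_1,b_2,c$. Its edges are $a_1b_1$, $a_1b_2$, $a_1c$ and $b_1b_2$, so $a_1$ is a complete vertex and the rest of the graph is $K_2\cup K_1$. I would take a solvable group $H$ with $\Delta(H)$ equal to the disconnected graph on $\{b_1,b_2\}\cup\{c\}$; such groups exist by Zhang's classification \cite{Z}. I would then take a nonabelian $a_1$-group $P$. Since $\cd(H\times P)=\{xy : x\in\cd(H),\, y\in\cd(P)\}$, the graph $\Delta(H\times P)$ joins $a_1$ to every vertex and leaves $\Delta(H)$ unchanged otherwise. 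Hence $\Delta(H\times P)=\SL{1}{1}$.

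\emph{Set-up for the remaining cases.} Since $n\le k$, every other case has $k\ge2$. First I check that $\SL{k}{n}$ satisfies P\'alfy's condition (Lemma \ref{PC}), so it cannot be discarded outright. Any triple with two vertices in $B$, or two in $A$, spans an edge. A triple of the form $\{a_i,b_j,c\}$ spans the edge $a_ic$. The graph also has diameter two, so Theorem \ref{diameter} does not apply either. I would therefore argue by minimal counterexample. Suppose $G$ is solvable of minimal order with $\Delta(G)=\SL{k}{n}$, where $k\ge2$. By minimality, $\Delta(N)$ and $\Delta(G/N)$ are proper subgraphs for every proper normal subgroup $N$.

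\emph{Showing each vertex is admissible.} The aim is to reach Theorem \ref{admissible}. For a vertex $p$, I would check conditions (i) and (ii) of Definition \ref{adstrong} using three mechanisms.
\begin{itemize}
\item \emph{P\'alfy.} Deleting an edge $a_ic$ creates the independent triple $\{a_i,b_j,c\}$ for any $j$ with $a_i\not\sim b_j$, which exists because $k+n\ge3$. So every subgraph with an edge at $c$ removed is non-occurring. Similarly, deleting an edge $a_ib_i$ or $a_ib_{k+i}$ tends to create a complement triangle $\{a_i,b_i,c\}$-type configuration.
\item \emph{Theorem \ref{KT}.} Deleting $c$, or deleting the edges from $a_i$ to $b_{k+i}$, produces a graph containing $\Gamma_{k,k}$ or $\Gamma_{k+n,k}$ with extra vertices. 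Its non-occurrence should then follow from Theorem \ref{KT}, including the clause on proper connected subgraphs with the same vertex set.
\item \emph{Induction.} Deleting a vertex $a_i$ or $b_j$ yields a smaller graph of the same shape, or one of the graphs $\Gamma_{k,t}$. An induction on $k+n$ then applies, with $\SL{1}{1}$ as the only exception. That exception never arises as a vertex-deleted subgraph once $k\ge2$.
\end{itemize}

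\emph{Main obstacle and fallback.} I expect the hardest step to be condition (i) for $c$ and for the vertices $a_1,\dots,a_n$, which carry two $B$-neighbours. Deleting $c$ leaves $A$ and $B$ joined by a ``one-to-two'' matching. That graph is not literally some $\Gamma_{k,t}$, and it might occur. If it does, I would abandon global admissibility. Instead I would use the admissible vertices that do exist, namely those in $B$ and the $a_i$ with $i>n$. Lemma \ref{bigoh} gives $O^p(G)=G$, and Lemma \ref{strong} or Lemma \ref{pi} (with $q=c$, $\pi_1=\{a_i\}$-type splittings) show that the relevant Sylow subgroups are not normal. These facts force the Fitting subgroup $F$ to be minimal normal. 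Finally, I would apply Proposition \ref{final} with $a=a_1$, $c=b_1$, $b=b_2$, $d=b_{k+2}$ or another admissible vertex of $B$ not adjacent to $a_1$. This is legitimate because $\SL{k}{n}$ has at least $2k+n+1\ge5$ vertices.
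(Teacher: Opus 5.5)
Your construction for $(k,n)=(1,1)$ is fine. For example, $\Gamma(2^{11})$ has $\cd=\{1,11,2^{11}-1\}$ with $2^{11}-1=23\cdot 89$, which gives $K_1\cup K_2$; and $\SL{1}{1}$ is a four-vertex graph covered by \cite{Z} anyway. The paper does not reprove this theorem; it imports it from \cite{LM}. So your non-occurrence half has to stand on its own, and as written it does not.

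\textbf{The vertex-deleted subgraphs are misidentified.} Your ``Induction'' and ``Theorem \ref{KT}'' bullets get these graphs wrong.
\begin{itemize}
\item $\SL{k}{n}[c]=\SZ{k}{n}$. This is $\Gamma_{k+n,k}$ plus $n$ extra edges on the same vertex set, and Theorem \ref{KT} says nothing about supergraphs.
\item For $n<j\le k$, $\SL{k}{n}[b_j]=\SLi{k-1}{n}{2}$, because $a_j$ becomes a second ``$c$'' vertex.
\item $\SL{k}{n}[a_i]$ has diameter three; it is not a smaller graph of the same shape.
\end{itemize}
For $k=2$ these subgraphs can occur. $\SZ{2}{1}$ and $\SZ{2}{2}$ occur by Proposition \ref{prop1and20}, and $\SL{2}{1}[b_2]=\SLi{1}{1}{2}$ is the bowtie of Proposition \ref{prop11mL}. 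Hence $c$ is not admissible in $\SL{2}{1}$ or $\SL{2}{2}$. Also $b_2$ is not admissible in $\SL{2}{1}$, contrary to your claim that all of $B$ is. So Theorem \ref{admissible} is unreachable exactly in the base cases.

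For $k\ge3$ you would need non-occurrence of $\SZ{k}{n}$ and $\SLi{k-1}{n}{2}$. These are among the results this paper later derives \emph{from} Theorem \ref{Lfam}; see Lemma \ref{lem32ad}, Lemma \ref{lemk10}, Proposition \ref{propkn0} and Proposition \ref{propk1mL}. Using them requires a carefully staged joint induction, which your plan does not set up.

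\textbf{The fallback does not repair this.} Before the Frattini/Fitting reduction you must exclude a normal nonabelian Sylow subgroup for every prime, including $c$.
\begin{itemize}
\item Lemma \ref{strong} needs $c$ to be strongly admissible, which fails for $k=2$.
\item Lemma \ref{pi} with $q=c$ is inapplicable. The neighbourhood of $c$ is $A$, a clique, so it cannot be split into nonempty mutually nonadjacent $\pi_1,\pi_2$. The lemma does work for $q=b_2$ in $\SL{2}{1}$, as in Lemma \ref{lem21pi}, but not for $c$.
\item You never check condition (iii) of Definition \ref{adstrong} for the vertices to which you apply Lemma \ref{strong}.
\item ``These facts force $F$ to be minimal normal'' skips the real work. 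You still have to prove $\Phi(G)=1$, which needs control of proper spanning subgraphs and of the disconnected case. You also need the case analysis on $\rho(G/N)$ and $\rho(G/M)$, as in Lemma \ref{lem21mLfit}.
\item Your instance of Proposition \ref{final} is invalid. With $c=b_1$ and $b=b_2$, $b$ is adjacent to $c$ because $B$ is complete. A choice meeting the adjacency requirements is $a=b_2$, $c=a_2$, $b=b_1$, $d=a_1$, once $a_1$ and $a_2$ are shown admissible.
\end{itemize}
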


In particular, we see that Theorem \ref{Lfam} is nothing more than the case of $m=1$. That is, $\SLi{k}{n}{1}=\SL{k}{n}$. Our goal here is to classify the graphs for $m\geq2$.

\subsection{The case of $k=1$}

The scenario of $k=1$ brings to light some unsolved problems in number theory. One can see Figure \ref{fig11mL} for examples of such graphs, which are addressed in Proposition \ref{prop11mL}.

\begin{figure}[htb]
    \centering
$
\begin{tikzpicture}[scale=2]
\node (uc1) at (0,1) {$c_1$};
\node (uc2) at (0,0) {$c_2$};
\node (ua1) at (0.5,0.5) {$a_1$};
\node (ub1) at (1,1) {$b_1$};
\node (ub2) at (1,0) {$b_2$};
\path[font=\small,>=angle 90]
(uc1) edge node [right] {$ $} (uc2)
(uc1) edge node [right] {$ $} (ua1)
(uc2) edge node [right] {$ $} (ua1)
(ua1) edge node [right] {$ $} (ub1)
(ua1) edge node [right] {$ $} (ub2)
(ub1) edge node [right] {$ $} (ub2);
\node (vc1) at (2.25,1) {$c_1$};
\node (vc2) at (2.25,0) {$c_2$};
\node (vc3) at (1.75,0.5) {$c_3$};
\node (va1) at (2.75,0.5) {$a_1$};
\node (vb1) at (3.25,1) {$b_1$};
\node (vb2) at (3.25,0) {$b_2$};
\path[font=\small,>=angle 90]
(vc1) edge node [right] {$ $} (vc2)
(vc1) edge node [right] {$ $} (va1)
(vc2) edge node [right] {$ $} (va1)
(va1) edge node [right] {$ $} (vb1)
(va1) edge node [right] {$ $} (vb2)
(vb1) edge node [right] {$ $} (vb2)
(vc1) edge node [right] {$ $} (vc3)
(vc3) edge node [right] {$ $} (vc2)
(va1) edge node [right] {$ $} (vc3);
\node (wc1) at (4.5,1) {$c_1$};
\node (wc2) at (4.5,0) {$c_2$};
\node (wc3) at (4,0.8) {$c_3$};
\node (wc4) at (4,0.2) {$c_4$};
\node (wa1) at (5,0.5) {$a_1$};
\node (wb1) at (5.5,1) {$b_1$};
\node (wb2) at (5.5,0) {$b_2$};
\path[font=\small,>=angle 90]
(wc1) edge node [right] {$ $} (wc2)
(wc1) edge node [right] {$ $} (wa1)
(wc2) edge node [right] {$ $} (wa1)
(wa1) edge node [right] {$ $} (wb1)
(wa1) edge node [right] {$ $} (wb2)
(wb1) edge node [right] {$ $} (wb2)
(wc1) edge node [right] {$ $} (wc3)
(wc3) edge node [right] {$ $} (wc2)
(wa1) edge node [right] {$ $} (wc3)
(wc3) edge node [right] {$ $} (wc4)
(wc2) edge node [right] {$ $} (wc4)
(wa1) edge node [right] {$ $} (wc4)
(wc1) edge node [right] {$ $} (wc4);
\node (xc1) at (1.375,-0.5) {$c_1$};
\node (xc2) at (1.375,-1.5) {$c_2$};
\node (xc3) at (.875,-0.5) {$c_3$};
\node (xc4) at (.875,-1.5) {$c_4$};
\node (xc5) at (0.375,-1) {$c_5$};
\node (xa1) at (1.875,-1) {$a_1$};
\node (xb1) at (2.375,-0.5) {$b_1$};
\node (xb2) at (2.375,-1.5) {$b_2$};
\path[font=\small,>=angle 90]
(xc1) edge node [right] {$ $} (xc2)
(xc1) edge node [right] {$ $} (xa1)
(xc2) edge node [right] {$ $} (xa1)
(xa1) edge node [right] {$ $} (xb1)
(xa1) edge node [right] {$ $} (xb2)
(xb1) edge node [right] {$ $} (xb2)
(xc1) edge node [right] {$ $} (xc3)
(xc3) edge node [right] {$ $} (xc2)
(xa1) edge node [right] {$ $} (xc3)
(xc3) edge node [right] {$ $} (xc4)
(xc2) edge node [right] {$ $} (xc4)
(xa1) edge node [right] {$ $} (xc4)
(xc1) edge node [right] {$ $} (xc4)
(xc3) edge node [right] {$ $} (xc4)
(xc1) edge node [right] {$ $} (xc4)
(xc3) edge node [right] {$ $} (xc5)
(xc4) edge node [right] {$ $} (xc5)
(xc5) edge node [right] {$ $} (xa1)
(xc2) edge node [right] {$ $} (xc3)
(xc2) edge node [right] {$ $} (xc5)
(xc1) edge node [right] {$ $} (xc5);
\node (yc1) at (4.125,-0.6) {$c_1$};
\node (yc2) at (4.125,-1.4) {$c_2$};
\node (yc3) at (3.625,-0.5) {$c_3$};
\node (yc4) at (3.625,-1.5) {$c_4$};
\node (yc5) at (3.125,-0.75) {$c_5$};
\node (yc6) at (3.125,-1.25) {$c_6$};
\node (ya1) at (4.625,-1) {$a_1$};
\node (yb1) at (5.125,-0.5) {$b_1$};
\node (yb2) at (5.125,-1.5) {$b_2$};
\path[font=\small,>=angle 90]
(yc1) edge node [right] {$ $} (yc2)
(yc1) edge node [right] {$ $} (ya1)
(yc2) edge node [right] {$ $} (ya1)
(ya1) edge node [right] {$ $} (yb1)
(ya1) edge node [right] {$ $} (yb2)
(yb1) edge node [right] {$ $} (yb2)
(yc1) edge node [right] {$ $} (yc3)
(yc3) edge node [right] {$ $} (yc2)
(ya1) edge node [right] {$ $} (yc3)
(yc3) edge node [right] {$ $} (yc4)
(yc2) edge node [right] {$ $} (yc4)
(ya1) edge node [right] {$ $} (yc4)
(yc1) edge node [right] {$ $} (yc4)
(yc3) edge node [right] {$ $} (yc4)
(yc1) edge node [right] {$ $} (yc4)
(yc3) edge node [right] {$ $} (yc5)
(yc4) edge node [right] {$ $} (yc5)
(yc5) edge node [right] {$ $} (ya1)
(yc2) edge node [right] {$ $} (yc3)
(yc2) edge node [right] {$ $} (yc5)
(yc1) edge node [right] {$ $} (yc5)
(yc1) edge node [right] {$ $} (yc6)
(ya1) edge node [right] {$ $} (yc6)
(yc2) edge node [right] {$ $} (yc6)
(yc4) edge node [right] {$ $} (yc6)
(yc6) edge node [right] {$ $} (yc3)
(yc5) edge node [right] {$ $} (yc6);
\end{tikzpicture}
$
    \caption{The graphs $\SLi{1}{1}{m}$ for $2\leq m\leq6$}
    \label{fig11mL}
\end{figure}

\begin{proposition}\label{prop11mL}
The graph $\SLi{1}{1}{m}$ occurs as the prime character degree graph of some solvable group for all $2\leq m\leq35$.
\end{proposition}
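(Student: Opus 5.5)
The plan is to realize each graph $\SLi{1}{1}{m}$, for $2\leq m\leq35$, by one explicit solvable group whose parameters are found by a number-theoretic search. With $k=n=1$ the graph is the vertex $a_1$ adjacent to everything, joined to the disjoint union of the edge $\{b_1,b_2\}$ and the clique $K_m$ on $c_1,\ldots,c_m$.

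\emph{Why a direct product will not work.} A direct product $H_1\times H_2$ with $\rho(H_1)\cap\rho(H_2)=\varnothing$ creates every edge between $\rho(H_1)$ and $\rho(H_2)$. The $b$'s and the $c$'s must be non-adjacent, so they cannot come from different factors. The natural alternative, taking $\Delta(H)=K_2\sqcup K_m$ and multiplying by a group $P$ with $\Delta(P)=\{a_1\}$ (say a nonabelian $a_1$-group), is blocked for small $m$ by P\'alfy's inequality on disconnected graphs. It also would not explain the number theory the paper alludes to. So I will look for a single group $G$ in which the prime $a_1$ divides two ``types'' of character degrees:
\begin{itemize}
\item degrees divisible by $a_1b_1b_2$ and by no $c_j$;
\item degrees divisible by $a_1c_1\cdots c_m$ and by no $b_i$.
\end{itemize}
Concretely, I would aim for $\cd(G)\supseteq\{1,\ a_1b_1b_2,\ a_1c_1\cdots c_m\}$, with every other degree involving only primes inside one of these two cliques.

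\emph{The candidate family.} I would use a semilinear construction. Let $V=\mathbb F_{q^r}$, and let $H\leq\Gamma(q^r)=\mathbb F_{q^r}^\times\rtimes\mathrm{Gal}$ be chosen so that, in $G=V\rtimes H$, two things hold:
\begin{itemize}
\item the nonlinear characters lying over nontrivial characters of $V$ have degrees $|H:C_H(\lambda)|$, whose prime divisors form one clique;
\item the characters of $H$ itself contribute the other clique.
\end{itemize}
The shared vertex $a_1$ is arranged as a prime dividing both the Galois part $r$ and $q^r-1$. One could also adjust $G$ by a central product or an extension by a small $a_1$-group. The first step is to compute $\cd(G)$ for this family via Clifford theory and Gallagher's theorem. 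The aim is a formula for $\cd(G)$ in terms of $q$, $r$ and the chosen subgroup, of the shape
$$\cd(G)=\{1,\ r',\ (q^r-1)/d,\ \ldots\},$$
where $r'$ and $d$ are read off from the choice of $H$. From that formula I would check P\'alfy's condition (Lemma \ref{PC}) and read off $\Delta(G)$.

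\emph{The parameter search.} The second step translates ``$\Delta(G)=\SLi{1}{1}{m}$'' into arithmetic conditions:
\begin{itemize}
\item the relevant factor of $q^r-1$ must have exactly $m$ distinct prime divisors, all distinct from the primes in the $b$-part;
\item the $b$-part must have exactly the two primes $b_1,b_2$;
\item $a_1$ must divide both parts, with Zsigmondy-type control ensuring no unwanted prime divides both.
\end{itemize}
The third step exhibits, for each $m$ with $2\leq m\leq 35$, explicit $(q,r)$ satisfying these conditions, presented as a table and verified by factorization.

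\emph{Main obstacle.} I expect this third step to be the hard part, and it is why the statement stops at $35$ rather than covering all $m$. We need integers of a special form, essentially cyclotomic values $\Phi_r(q)$ or quotients of $q^r-1$, with \emph{exactly} $m$ prime factors and with prescribed coprimality. Proving such numbers exist for every $m$ is an open-type number-theoretic question. So for the finite range I would carry out a computer search over small $r$ and prime powers $q$, factoring candidates and recording one witness per $m$. I would try highly composite $r$ first, since $q^r-1$ then splits into many cyclotomic factors and accumulates many distinct primes quickly.
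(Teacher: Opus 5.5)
\textbf{There is a genuine gap.} The statement is a list of $34$ existence claims, and your proposal never exhibits a group for any of them. Every case is deferred to a $\cd$ computation ``of the shape'' $\{1,r',(q^r-1)/d,\ldots\}$ and to a search that is never carried out.

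More seriously, the family you commit to cannot reach the small values of $m$, at least in the one version whose degrees you can actually write down. Take $V=\mathbb{F}_{q^r}$, $H=\mathbb{F}_{q^r}^\times\rtimes\mathrm{Gal}(\mathbb{F}_{q^r}/\mathbb{F}_q)$ and $G=V\rtimes H$.
\begin{enumerate}[(i)]
\item $H$ is transitive on the nontrivial characters of $V$, with cyclic stabilizers. Hence $\cd(G)=\{t : t\mid r\}\cup\{q^r-1\}$.
\item For the triangle $\{a_1,b_1,b_2\}$ to come from $\cd(H)$ with $a_1$ shared, you need $\pi(r)=\{a_1,b_1,b_2\}$. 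So $r$ has at least $8$ divisors.
\item Zsigmondy's theorem then forces $q^r-1$ to have at least $d(r)-2\geq 6$ distinct prime divisors, so $m\geq 5$.
\end{enumerate}
In that range your idea does work. For example, $q=2$, $r=30$ gives $\cd(G)=\{1,2,3,5,6,10,15,30,2^{30}-1\}$ with $2^{30}-1=3^2\cdot 7\cdot 11\cdot 31\cdot 151\cdot 331$, and this is $\SLi{1}{1}{5}$ with $a_1=3$. For $m=2,3,4$, however, everything rests on unspecified ``adjustments'' (proper subgroups of $\mathbb{F}_{q^r}^\times$, central products, extra $a_1$-groups). Those cases are simply not handled.

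The missing idea is the one you discarded. P\'alfy's inequality for a disconnected graph only forces the larger component to have at least $2^2-1=3$ vertices when the smaller has $2$. So $K_2\sqcup K_m$ is excluded only for $m=2$.

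For $3\leq m\leq 35$ the paper uses exactly the direct product you dismissed. Let $G_m=\mathbb{F}_{2^r}\rtimes\mathbb{F}_{2^r}^\times\rtimes\mathrm{Gal}(\mathbb{F}_{2^r}/\mathbb{F}_2)$, where $r$ has exactly two prime divisors. Then $\cd(G_m)=\{t: t\mid r\}\cup\{2^r-1\}$, so $\Delta(G_m)=K_2\sqcup K_m$ whenever $2^r-1$ has exactly $m$ prime divisors, none of which divides $r$. This is where the number theory you anticipated actually lives: the paper's table of exponents. Taking $G_1\times G_m$ with $\Delta(G_1)=K_1$ on a new prime then makes that prime adjacent to everything and creates no other edges, so $\Delta(G_1\times G_m)=\SLi{1}{1}{m}$.

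Only the bowtie case $m=2$ needs a non-product group. There the paper simply cites Lewis's construction from his classification of five-vertex graphs.
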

\begin{proof}
First note that for $m=2$, the graph $\SLi{1}{1}{2}$ is exactly the bowtie graph with five vertices that Lewis constructs in \cite{L3}, verifying it does indeed occur as the prime character degree graph of some finite solvable group.

Next, let $m\in\mathbb{N}$ such that $3\leq m\leq35$. We follow an approach similar to that done in \cite{L3} (see also \cite{BLL}, \cite{LMS}, and \cite{LS2}). For each appropriate $m$, we start by considering the finite field of order $2^{p^aq^b}$ acted on by its full multiplication group followed by its Galois group, where $p$, $a$, $q$, and $b$ are all defined based on $m$ given in Figure \ref{figpandq}. One can verify that $2^{p^aq^b}-1$ is the product of powers of $m$ distinct primes, all of which are different than $p$ and $q$. (One can do this using Mathematica, or an online database, like factordb, for example.) In notation, we now have the finite solvable group $G_m$ given as the following:
$$
G_m=\mathbb{F}_{2^{p^aq^b}}\rtimes\mathbb{F}_{2^{p^aq^b}}^\times\rtimes\text{Gal}(\mathbb{F}_{2^{p^aq^b}}/\mathbb{F}_2),
$$
and one can easily check that the set of character degrees for $G_m$ is
$$
\cd(G_m)=\{1,p^i,q^j,p^iq^j,2^{p^aq^b}-1~|~1\leq i\leq a\text{~and~}1\leq j\leq b\}.
$$
Therefore, $\Delta(G_m)$ can be seen as the disconnected graph where one component is $K_2$ with the vertex set $\{p,q\}$, and the other is $K_m$ where the $m$ vertices come from the $m$ distinct primes that are divisors of $2^{p^aq^b}-1$.

Next, we recall that the singleton $K_1$ occurs as the prime character degree graph of some solvable group $G_1$. In notation, we have that $\Delta(G_1)=K_1$.

We now consider the direct product of $G_1$ and $G_m$. Notice that $\Delta(G_1\times G_m)=\SLi{1}{1}{m}$, which is what we wanted.
\end{proof}

\begin{figure}[htb]
    \centering
    \resizebox{\textwidth}{!}{
\begin{tabular}{c|c|c|c|c|c|||c|c|c|c|c|c|||c|c|c|c|c|c}
$m$ & $p$ & $a$ & $q$ & $b$ & $2^{p^aq^b}-1$ & $m$ & $p$ & $a$ & $q$ & $b$ & $2^{p^aq^b}-1$ & $m$ & $p$ & $a$ & $q$ & $b$ & $2^{p^aq^b}-1$\\
\hline\hline\hline
3 & 2 & 1 & 5 & 1 & $2^{10}-1$ & 14 & 7 & 1 & 163 & 1 & $2^{1141}-1$ & 25 & 2 & 5 & 107 & 1 & $2^{3424}-1$\\
4 & 5 & 1 & 7 & 1 & $2^{35}-1$ & 15 & 3 & 2 & 37 & 1 & $2^{333}-1$ & 26 & 2 & 4 & 127 & 1 & $2^{2032}-1$\\
5 & 3 & 1 & 17 & 1 & $2^{51}-1$ & 16 & 2 & 2 & 97 & 1 & $2^{388}-1$ & 27 & 2 & 4 & 7 & 2 & $2^{784}-1$\\
6 & 11 & 1 & 13 & 1 & $2^{143}-1$ & 17 & 5 & 3 & 13 & 1 & $2^{1625}-1$ & 28 & 2 & 2 & 31 & 2 & $2^{3844}-1$\\
7 & 7 & 1 & 23 & 1 & $2^{161}-1$ & 18 & 3 & 3 & 53 & 1 & $2^{1431}-1$ & 29 & 2 & 6 & 19 & 1 & $2^{1216}-1$\\
8 & 3 & 1 & 53 & 1 & $2^{159}-1$ & 19 & 2 & 3 & 59 & 1 & $2^{472}-1$ & 30 & 2 & 6 & 67 & 1 & $2^{4288}-1$\\
9 & 13 & 1 & 23 & 1 & $2^{299}-1$ & 20 & 3 & 3 & 37 & 1 & $2^{999}-1$ & 31 & 2 & 6 & 29 & 1 & $2^{1856}-1$\\
10 & 2 & 1 & 113 & 1 & $2^{226}-1$ & 21 & 3 & 4 & 13 & 1 & $2^{1053}-1$ & 32 & 2 & 4 & 163 & 1 & $2^{2608}-1$\\
11 & 23 & 1 & 29 & 1 & $2^{667}-1$ & 22 & 2 & 3 & 157 & 1 & $2^{1256}-1$ & 33 & 2 & 4 & 11 & 2 & $2^{1936}-1$\\
12 & 13 & 1 & 73 & 1 & $2^{949}-1$ & 23 & 2 & 6 & 11 & 1 & $2^{704}-1$ & 34 & 2 & 4 & 13 & 2 & $2^{2704}-1$\\
13 & 31 & 1 & 47 & 1 & $2^{1457}-1$ & 24 & 2 & 2 & 23 & 2 & $2^{2116}-1$ & 35 & 2 & 5 & 7 & 2 & $2^{1568}-1$\\
\end{tabular}
    }
    \caption{The construction of the group $G_m$}
    \label{figpandq}
\end{figure}

Concerning the groups $G_m$ above, we note that some of these have indeed been constructed before, specifically $3\leq m\leq 6$. In fact, Figure \ref{figpandq} documents the exact primes used for the respective constructions that the corresponding authors have used for $m=3$ (as seen in \cite{L3}), for $m=4$ (\cite{BLL}), for $m=5$ (\cite{LMS}), and for $m=6$ (\cite{LS2}). The rest are new constructions. See Example \ref{exm7} below for an explicit description for $m=7$, if interested. Moreover, for $m>35$, we note that the graph $\SLi{1}{1}{m}$ \emph{possibly} occurs. If one has enough computing power, one can conceivably find more that do occur. Mercifully, we stopped at $35$.

\begin{example}\label{exm7}
Here we present the explicit construction of the finite solvable group from Proposition \ref{prop11mL} corresponding to $m=7$. Following the entry for $m=7$ in Figure \ref{figpandq}, we use the prime numbers $7$ and $23$, and note that $7\cdot23=161$. Therefore, we are considering the finite field of order $2^{161}$ acted on by its full multiplication group and then its Galois group, which we ultimately denote as $G_7$. We see that $2^{161}-1$ is the product of seven distinct prime numbers, all of which are different that $7$ and $23$. In fact,
$$
2^{161}-1=47\cdot127\cdot1289\cdot178481\cdot3188767\cdot45076044553\cdot14808607715315782481,
$$
where for ease of notation, we can label those prime factors as $r_i$ for $1\leq i\leq7$. We then have that $\cd(G_7)=\{1,7,23,161,2^{161}-1\}$, which yields a disconnected prime character degree graph with two components, one of which will be $K_2$ (specifically, the complete graph on the vertex set $\{7,23\}$), and the other $K_7$ (the complete graph on the vertex set $\{r_1,r_2,r_3,r_4,r_5,r_6,r_7\}$). Following the notation above where $G_1$ is the finite solvable group whose prime character degree graph is the singleton (which we can guarantee is different than all of $p$, $q$, and $r_i$ for all $1\leq i\leq7$, denoted with a bullet on the graph below), we then consider $\Delta(G_1\times G_7)$ as seen Figure \ref{fig117L}, which is exactly the graph $\SLi{1}{1}{7}$.
\begin{figure}[htb]
    \centering
$
\begin{tikzpicture}[scale=2]
\node (r1) at (0,.5) {$r_1$};
\node (r2) at (.5,.9) {$r_2$};
\node (r3) at (.5,.1) {$r_3$};
\node (r4) at (1,1) {$r_4$};
\node (r5) at (1,0) {$r_5$};
\node (r6) at (1.5,.9) {$r_6$};
\node (r7) at (1.5,.1) {$r_7$};
\node (x) at (2,.5) {$\bullet$};
\node (p) at (2.5,1) {$7$};
\node (q) at (2.5,0) {$23$};
\path[font=\small,>=angle 90]
(p) edge node [right] {$ $} (q)
(r1) edge node [right] {$ $} (r2)
(r1) edge node [right] {$ $} (r3)
(r1) edge node [right] {$ $} (r4)
(r1) edge node [right] {$ $} (r5)
(r1) edge node [right] {$ $} (r6)
(r1) edge node [right] {$ $} (r7)
(r2) edge node [right] {$ $} (r3)
(r2) edge node [right] {$ $} (r4)
(r2) edge node [right] {$ $} (r5)
(r2) edge node [right] {$ $} (r6)
(r2) edge node [right] {$ $} (r7)
(r3) edge node [right] {$ $} (r4)
(r3) edge node [right] {$ $} (r5)
(r3) edge node [right] {$ $} (r6)
(r3) edge node [right] {$ $} (r7)
(r4) edge node [right] {$ $} (r5)
(r4) edge node [right] {$ $} (r6)
(r4) edge node [right] {$ $} (r7)
(r5) edge node [right] {$ $} (r6)
(r5) edge node [right] {$ $} (r7)
(r6) edge node [right] {$ $} (r7)
(x) edge node [right] {$ $} (p)
(x) edge node [right] {$ $} (q)
(x) edge node [right] {$ $} (r1)
(x) edge node [right] {$ $} (r2)
(x) edge node [right] {$ $} (r3)
(x) edge node [right] {$ $} (r4)
(x) edge node [right] {$ $} (r5)
(x) edge node [right] {$ $} (r6)
(x) edge node [right] {$ $} (r7);
\end{tikzpicture}
$
    \caption{The graph $\Delta(G_1\times G_7)=\SLi{1}{1}{7}$}
    \label{fig117L}
\end{figure}
\end{example}

\subsection{The case of $k=2$}

Notice that in the case of $k=2$, there are only two options for $n$ due to the restriction that $1\leq n\leq k$. For ease of argument, we therefore treat the cases of $n=1$ (see Figure \ref{fig21mL}) and $n=2$ (see Figure \ref{fig22mL}) separately.

For the case of $n=1$, we will proceed by induction on $m$. Let $m\in\mathbb{N}$ such that $m\geq2$. Notice that for the base case of $m=2$, the graph $\SLi{2}{1}{2}$ was shown not to occur as the prime character degree graph of any solvable group in \cite{LMS}. In that paper, following the label given by the authors, the graph $\SLi{2}{1}{2}$ is exactly the graph $C_{18}$. We then state the inductive hypothesis formally as follows:

\begin{hyp}\label{hyp21mL}
Given any integer $t\geq2$, we assume that the graph $\SLi{2}{1}{t}$ does not occur as the prime character degree graph of any solvable group.
\end{hyp}

For the inductive step, we aim to show that the graph $\SLi{2}{1}{(t+1)}$ also does not occur. We will follow a similar argument to that done in \cite{LMS}, which consists of a series of lemmas, the first of which shows that all but one of the vertices are strongly admissible.

\begin{figure}[htb]
    \centering
$
\begin{tikzpicture}[scale=2]
\node (xc1) at (0,1) {$c_1$};
\node (xc2) at (0,0) {$c_2$};
\node (xa1) at (.5,.8) {$a_1$};
\node (xa2) at (.5,.2) {$a_2$};
\node (xb3) at (1,.5) {$b_3$};
\node (xb1) at (1.5,1) {$b_1$};
\node (xb2) at (1.5,0) {$b_2$};
\path[font=\small,>=angle 90]
(xc1) edge node [right] {$ $} (xc2)
(xa1) edge node [right] {$ $} (xa2)
(xb1) edge node [right] {$ $} (xb2)
(xb1) edge node [right] {$ $} (xb3)
(xb2) edge node [right] {$ $} (xb3)
(xc1) edge node [right] {$ $} (xa1)
(xc1) edge node [right] {$ $} (xa2)
(xc2) edge node [right] {$ $} (xa1)
(xc2) edge node [right] {$ $} (xa2)
(xa1) edge node [right] {$ $} (xb1)
(xa2) edge node [right] {$ $} (xb2)
(xa1) edge node [right] {$ $} (xb3);
\node (yc1) at (2.75,1) {$c_1$};
````    \node (yc2) at (2.75,0) {$c_2$};
\node (yc3) at (2.25,.5) {$c_3$};
\node (ya1) at (3.25,.8) {$a_1$};
\node (ya2) at (3.25,.2) {$a_2$};
\node (yb3) at (3.75,.5) {$b_3$};
\node (yb1) at (4.25,1) {$b_1$};
\node (yb2) at (4.25,0) {$b_2$};
\path[font=\small,>=angle 90]
(yc1) edge node [right] {$ $} (yc2)
(ya1) edge node [right] {$ $} (ya2)
(yb1) edge node [right] {$ $} (yb2)
(yb1) edge node [right] {$ $} (yb3)
(yb2) edge node [right] {$ $} (yb3)
(yc1) edge node [right] {$ $} (ya1)
(yc1) edge node [right] {$ $} (ya2)
(yc2) edge node [right] {$ $} (ya1)
(yc2) edge node [right] {$ $} (ya2)
(ya1) edge node [right] {$ $} (yb1)
(ya2) edge node [right] {$ $} (yb2)
(ya1) edge node [right] {$ $} (yb3)
(yc3) edge node [right] {$ $} (yc1)
(yc3) edge node [right] {$ $} (yc2)
(yc3) edge node [right] {$ $} (ya1)
(yc3) edge node [right] {$ $} (ya2);
\node (zc1) at (5.5,1) {$c_1$};
\node (zc2) at (5.5,0) {$c_2$};
\node (zc3) at (5,.8) {$c_3$};
\node (zc4) at (5,.2) {$c_4$};
\node (za1) at (6,.8) {$a_1$};
\node (za2) at (6,.2) {$a_2$};
\node (zb3) at (6.5,.5) {$b_3$};
\node (zb1) at (7,1) {$b_1$};
\node (zb2) at (7,0) {$b_2$};
\path[font=\small,>=angle 90]
(zc1) edge node [right] {$ $} (zc2)
(za1) edge node [right] {$ $} (za2)
(zb1) edge node [right] {$ $} (zb2)
(zb1) edge node [right] {$ $} (zb3)
(zb2) edge node [right] {$ $} (zb3)
(zc1) edge node [right] {$ $} (za1)
(zc1) edge node [right] {$ $} (za2)
(zc2) edge node [right] {$ $} (za1)
(zc2) edge node [right] {$ $} (za2)
(za1) edge node [right] {$ $} (zb1)
(za2) edge node [right] {$ $} (zb2)
(za1) edge node [right] {$ $} (zb3)
(zc3) edge node [right] {$ $} (zc1)
(zc3) edge node [right] {$ $} (zc2)
(zc3) edge node [right] {$ $} (za1)
(zc3) edge node [right] {$ $} (za2)
(zc4) edge node [right] {$ $} (zc1)
(zc4) edge node [right] {$ $} (zc2)
(zc4) edge node [right] {$ $} (zc3)
(zc4) edge node [right] {$ $} (za1)
(zc4) edge node [right] {$ $} (za2);
\end{tikzpicture}
$
    \caption{The graphs $\SLi{2}{1}{2}$, $\SLi{2}{1}{3}$, and $\SLi{2}{1}{4}$}
    \label{fig21mL}
\end{figure}

\begin{lemma}\label{lem21ad}
Suppose that we are under the conditions of Hypothesis \ref{hyp21mL}. Further suppose that $\SLi{2}{1}{(t+1)}=\Delta(G)$ for some finite solvable group $G$, where $|G|$ is minimal. Then $G$ does not have any normal nonabelian Sylow $p$-subgroup for any $p\in\{a_1,a_2\}\cup\{b_1,b_3\}\cup\{c_1,c_2,\ldots,c_{t+1}\}$. In particular, $p$ is a strongly admissible vertex.
\end{lemma}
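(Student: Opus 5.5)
The plan is to show first that each listed vertex $p$ is strongly admissible in $\Gamma=\SLi{2}{1}{(t+1)}$. The conclusion about Sylow subgroups then follows from Lemma \ref{strong}, so most of the work is graph-theoretic. Recall the adjacencies of $\Gamma$:
\begin{itemize}
\item $a_1$ is adjacent to $a_2,b_1,b_3$ and all $c_j$;
\item $a_2$ is adjacent to $a_1,b_2$ and all $c_j$;
\item $b_1,b_2,b_3$ form a triangle, and the $c_j$ form a clique.
\end{itemize}
For each listed vertex $p$ I check conditions (i)--(iii) of Definition \ref{adstrong}. The tools are Pálfy's condition (Lemma \ref{PC}), Hypothesis \ref{hyp21mL}, Theorem \ref{KT}, and the known classification of small graphs (Zhang's \cite{Z} and the five-vertex results of \cite{L3}).

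\emph{The $c$ vertices.} Removing $c_j$ gives $\SLi{2}{1}{t}$, which does not occur by Hypothesis \ref{hyp21mL}; this is (i).

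For (ii), delete some edges at $c_j$. If the edge $c_ja_2$ is deleted, then $\{c_j,a_2,b_1\}$ is an independent set, violating Pálfy. If $c_ja_1$ is deleted, then $\{c_j,a_1,b_2\}$ is independent. If only edges $c_jc_i$ are deleted, look at the triple $\{c_j,c_i,b_1\}$: it is independent, so Pálfy fails again.

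For (iii), remove $c_j$ and some edges among its neighbours $a_1,a_2,c_i$. An edge $a_1a_2$, $a_1c_i$, $a_2c_i$ or $c_ic_{i'}$ is removed. Each such pair, together with $b_2$ or $b_1$ (whichever is nonadjacent to both endpoints), forms an independent triple. For example:
\begin{itemize}
\item $\{a_1,a_2\}$ has no common $b$-neighbour, so $\{a_1,a_2,b\}$ is independent for any $b$ nonadjacent to both, such as $b_2$ or $b_1$;
\item $\{a_1,c_i,b_2\}$ and $\{a_2,c_i,b_1\}$ are independent;
\item $\{c_i,c_{i'},b_1\}$ is independent.
\end{itemize}
So Pálfy kills all of these.

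\emph{The $a$ vertices.} Removing $a_1$ or $a_2$ leaves a graph that is disconnected or of a shape ruled out by Pálfy. For example, $\Gamma[a_1]$ contains $\{b_1,c_1\}$ together with the non-neighbour structure, namely the independent triple $\{b_1,c_1,\ldots\}$ — wait, $b_1$ and $b_2$ are adjacent. So I need to use the disconnected-graph classification instead.

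$\Gamma[a_2]$ is connected through $a_1$, and $b_2$ attaches only to $b_1,b_3$. I expect it to be a $\Gamma_{k,t}$-type subgraph, which is killed by Theorem \ref{KT}. Concretely, view $\Gamma[a_2]$ as complete pieces $\{a_1,c_j\}$ and $\{b_1,b_2,b_3\}$ with extra edges $a_1b_1$ and $a_1b_3$. I would compare this with \cite{LM}'s treatment of $\SL{k}{n}$ and with the diameter-three bound of Theorem \ref{diameter}: $\rho_1\cup\rho_2$ versus $\rho_3\cup\rho_4$ together with $|\rho_3\cup\rho_4|\ge 2^{|\rho_1\cup\rho_2|}$.

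For (ii) and (iii) at $a_i$, removing edges at $a_i$ typically creates a Pálfy triple, since $b_2$ is nonadjacent to everything outside $\{a_2,b_1,b_3\}$. Otherwise it disconnects the graph into two non-complete components or into pieces violating the disconnected-graph classification.

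\emph{The $b$ vertices.} For $b_1$ and $b_3$:
\begin{itemize}
\item removing $b_1$ yields a diameter-three graph. Its $\rho_3\cup\rho_4$ side is too small relative to $2^{|\rho_1\cup\rho_2|}$, so Theorem \ref{diameter} excludes it; alternatively I would match it to a $\Gamma_{k,t}$ with $t\ge2$.
\item edge deletions at $b_1$ or $b_3$ give triples like $\{b_1,b_2,a_1\}$ or $\{b_1,a_2,c_j\}$ becoming independent.
\end{itemize}

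The main obstacle is condition (i) for the $a$ and $b$ vertices. There the resulting graphs are connected, satisfy Pálfy, and must be excluded by the diameter-three inequalities of Theorem \ref{diameter}, or, for the $a$-deletions, possibly by reduction to a previously classified family. I would first try Theorem \ref{diameter}\eqref{diam3} and only then look for an embedding into the $\Gamma_{k,t}$ framework.

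Once strong admissibility is established for every listed $p$, I finish by minimality of $|G|$. Every proper quotient $G/N$ has $\Delta(G/N)$ a proper subgraph: otherwise $G/N$ would be a smaller group realising $\Gamma$. Lemma \ref{strong} then gives that a Sylow $p$-subgroup is not normal, and in particular $G$ has no normal nonabelian Sylow $p$-subgroup.
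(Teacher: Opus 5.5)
Your framework is the paper's: verify conditions (i)--(iii) of Definition \ref{adstrong} vertex by vertex, then apply Lemma \ref{strong} using minimality of $|G|$. However, several concrete verifications fail, and the tools you leave out are exactly the ones needed.

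\textbf{The edge $\epsilon(a_1,a_2)$.} This is the clearest failure, in $\Gamma=\SLi{2}{1}{(t+1)}$. No vertex is nonadjacent to both $a_1$ and $a_2$: $b_1$ and $b_3$ are adjacent to $a_1$, $b_2$ is adjacent to $a_2$, and every $c_i$ is adjacent to both. So your claim that $\{a_1,a_2,b_1\}$ or $\{a_1,a_2,b_2\}$ is independent is false; having no common $b$-neighbour does not give a common non-neighbour. Deleting this edge creates no P\'alfy triple and does not disconnect the graph. Yet it must be handled in condition (ii) for $a_1$ and $a_2$, and in condition (iii) for every $c_j$. The paper uses the generalized P\'alfy condition of \cite{A}. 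After deleting $\epsilon(a_1,a_2)$, the complement contains the $5$-cycle $a_1,a_2,b_3,c_i,b_2$ (take $i\neq j$ when $c_j$ has been removed, possible since $t+1\geq3$). Deleting further edges only enlarges the complement, so every combination is covered.

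\textbf{The $A$--$B$ edges.} These are $\epsilon(a_1,b_1)$, $\epsilon(a_1,b_3)$ (singly or together) and $\epsilon(a_2,b_2)$. Deleting them leaves connected graphs with no independent triple. The only non-neighbour of $a_1$ is $b_2$, which is adjacent to $b_1$ and $b_3$; the non-neighbours $b_1,b_3$ of $a_2$ are adjacent to $b_2$.
\begin{itemize}
\item Your dichotomy ``P\'alfy triple or disconnection'' therefore does not apply. The triples you list at $b_1$ are not independent ($a_1$ is adjacent to $b_1$, and $a_2$ to $c_j$); for $\epsilon(b_1,b_2)$ use $\{b_1,b_2,c_1\}$ instead.
\item These graphs have diameter three, as do $\Gamma[a_1]$ (which is connected, not disconnected) and $\Gamma[a_2]$. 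The uniform tool is Theorem \ref{diameter}\eqref{diam1}: the layer $\rho_3$ is a single vertex, e.g.\ $\{a_2\}$ for $\Gamma[a_1]$ with $p=b_1$, $q=c_1$.
\item Part \eqref{diam3}, which you planned to try first, gives nothing once $t\geq6$. The side containing the $c$'s then has at least $2^3$ vertices.
\item For $\Gamma[a_2]$ and $\Gamma[\epsilon(a_2,b_2)]$ there is no $\Gamma_{k,t}$ fallback, contrary to your expectation. There $a_1$ has two neighbours in $\{b_1,b_2,b_3\}$, and its remaining neighbours are nonadjacent to those, so neither graph embeds in any $\Gamma_{k,t}$. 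Only $\Gamma[a_1]$ and the deletions of $\epsilon(a_1,b_1)$ and/or $\epsilon(a_1,b_3)$ are proper connected subgraphs of $\Gamma_{t+2,3}$, resp.\ $\Gamma_{t+3,3}$.
\end{itemize}

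\textbf{The vertices $b_1,b_3$.} $\Gamma[b_1]$ has diameter two, not three: it is $\Gamma_{t+3,2}$, excluded by Theorem \ref{KT}. You also never check condition (iii) for $b_1$ and $b_3$. There $\epsilon(b_2,b_3)$ falls to P\'alfy with $c_1$. But deleting $\epsilon(a_1,b_3)$ from $\Gamma[b_1]$ is not a P\'alfy case; it needs the addendum of Theorem \ref{KT}, since it gives a connected proper subgraph of $\Gamma_{t+3,2}$ on the same vertex set.
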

\begin{proof}
First we consider the vertex $a_1$. Notice that the graph $\SLi{2}{1}{(t+1)}[a_1]$ is of diameter three, which will violate Sass's main result from \cite{S}. Specifically, one can follow the conventional notation with $p=b_1$ and $q=c_1$, which yields $|\rho_3|=1<3$, contradicting Theorem \ref{diameter}\eqref{diam1}, for instance. Next, removing the edge between $a_1$ and $c_i$ where $1\leq i\leq t+1$ results in a graph that does not occur by P\'alfy's condition from \cite{P} using vertices $a_1$, $c_i$, and $b_2$ (see Lemma \ref{PC}). Therefore, the edge $\epsilon(a_1,c_i)$ cannot be lost for any $1\leq i\leq t+1$. Removing the edge between $a_1$ and $a_2$ yields a graph that does not occur by the generalized P\'alfy's condition from \cite{A}. In particular, one can see the $5$-cycle in the complement graph using the vertices $a_1$, $a_2$, $b_3$, $c_1$, and $b_2$. Hence, the edge $\epsilon(a_1,a_2)$ cannot be lost. Removing the edge between $a_1$ and $b_1$ again results in a graph of diameter three, which does not occur by way of \cite{S}. To see this, one can use an argument similar to the above with $p=b_1$ and $q=c_1$, for example. Next, removing the edge $\epsilon(a_1,b_3)$ is symmetric to that of removing the edge $\epsilon(a_1,b_1)$, and therefore an identical argument can be applied. Finally, losing both of the edges results in the graph $\SLi{2}{1}{(t+1)}[\epsilon(a_1, b_1), \epsilon(a_1, b_3)]$, which can be shown not to occur by once again employing a diameter three argument. Hence, $a_1$ satisfies the first two conditions in Definition \ref{adstrong}, and therefore is admissible.

To see that $a_1$ is strongly admissible, we need only show that the third condition of Definition \ref{adstrong} is met. First, we consider removing $a_1$ and all incident edges, along with possibly the edges $\epsilon(b_1,b_3)$, $\epsilon(a_2,c_i)$ for all $1\leq i\leq t+1$, and $\epsilon(c_i,c_j)$ for all $1\leq i<j\leq t+1$. When the edge between $b_1$ and $b_3$ is removed, the resulting graph does not occur by P\'alfy's condition using vertices $b_1$, $b_3$, and $c_1$. Therefore, $\epsilon(b_1,b_3)$ cannot be lost. Next, removing the edge between $a_2$ and $c_i$ for any $1\leq i\leq t+1$ yields a graph that does not occur by P\'alfy's condition using vertices $a_2$, $b_1$, and $c_i$. Thus, $\epsilon(a_2,c_i)$ cannot be lost for all $1\leq i\leq t+1$. Finally, removing the edge between $c_i$ and $c_j$ for any $1\leq i<j\leq t+1$ gives us a graph that does not occur yet again by P\'alfy's condition with the trio $c_i$, $c_j$, and $b_1$, and so $\epsilon(c_i,c_j)$ cannot be lost. Thus, none of the edges between two adjacent vertices of $a_1$ can be removed, and therefore neither can any combination of them. Hence, $a_1$ is strongly admissible.

Next, let us consider the vertex $a_2$. Notice that the graph $\SLi{2}{1}{(t+1)}[a_2]$ is of diameter three, which will violate Sass's main result from \cite{S} by labeling $p=b_2$ and $q=c_1$, for example. This yields $|\rho_3|=1<3$, contradicting Theorem \ref{diameter}\eqref{diam1}. Next we consider losing edges incident to $a_2$. It was shown above that the edge $\epsilon(a_1,a_2)$ cannot be removed. Next, it is clear that $\epsilon(a_2,c_i)$ cannot be lost for all $1\leq i\leq t+1$, since doing so would violate P\'alfy's condition with $a_2$, $c_i$, and $b_1$. Finally, removing the edge between $a_2$ and $b_2$ once again gives us a graph of diameter three that does not occur due to the main result from \cite{S}. Thus, $a_2$ is admissible.

As for $a_1$ above, we need only show that $a_2$ satisfies the third condition of Definition \ref{adstrong} to prove that $a_2$ is strongly admissible. First, we consider the graph once $a_2$ and all incident edges have been removed, along with potentially $\epsilon(a_1,c_i)$ for all $1\leq i\leq t+1$ and $\epsilon(c_i,c_j)$ for all $1\leq i<j\leq t+1$. However, it was proven above that neither $\epsilon(a_1,c_i)$ could be lost for all $1\leq i\leq t+1$, nor could $\epsilon(c_i,c_j)$ be lost for all $1\leq i<j\leq t+1$, both due to P\'alfy's condition. Thus, $a_2$ is strongly admissible.

Now, let us consider vertex $b_1$. Observe that $\SLi{2}{1}{(t+1)}[b_1]$ is isomorphic to the graph $\Gamma_{t+3,2}$ and therefore does not occur by Theorem \ref{KT} because $t+3>2$ (see \cite{BL} or \cite{BLL2}). Let us next consider the removal of the edges incident to $b_1$. The edge $\epsilon(b_1,b_2)$ cannot be lost, because that would violate P\'alfy's condition with $b_1$, $b_2$, and $c_1$. Likewise, $\epsilon(b_1,b_3)$ cannot be removed either due to the trio $b_1$, $b_3$, and $c_1$. Finally, it was already established above that the edge $\epsilon(b_1,a_1)$ cannot be removed. Thus, $b_1$ is admissible.

It is clear that $b_1$ satisfies condition three of Definition \ref{adstrong}, as we will show. Let us consider the graph when $b_1$, all incident edges, and possibly $\epsilon(b_2,b_3)$ and $\epsilon(a_1,b_3)$ have been removed. The edge $\epsilon(b_2,b_3)$ cannot be removed due to P\'alfy's condition ($b_2$, $b_3$, and $c_1$), nor can $\epsilon(a_1,b_3)$ be removed due to the addendum of Theorem \ref{KT}. Thus, $b_1$ is strongly admissible. By symmetry, we also get that $b_3$ is strongly admissible.

Finally, we consider $c_1$. Note that $\SLi{2}{1}{(t+1)}[c_1]=\SLi{2}{1}{t}$, which does not occur by our inductive hypothesis (see Hypothesis \ref{hyp21mL}). Further, let us examine the loss of the edges incident to $c_1$. It was established above that $\epsilon(a_1,c_1)$ and $\epsilon(a_2,c_1)$ cannot be removed due to P\'alfy's condition. Likewise, the removal of the edge $\epsilon(c_1,c_i)$ for all $2\leq i\leq t+1$ again violates P\'alfy's condition (using $c_1$, $c_i$, and $b_1$, for example). Therefore, $c_1$ is admissible.

We will now show that $c_1$ is strongly admissible. Let us examine the graph once $c_1$ and all incident edges have been removed, along with potentially $\epsilon(a_1,a_2)$, $\epsilon(a_1,c_i)$, and $\epsilon(a_2,c_i)$ for all $2\leq i\leq t+1$, as well as $\epsilon(c_i,c_j)$ for all $2\leq i<j\leq t+1$. These edges were all treated above in a similar manner and cannot here be removed as shown by P\'alfy's condition or the generalized P\'alfy's condition from \cite{A}. Therefore, $c_1$ is strongly admissible. Due to a symmetrical argument, we also get that $c_i$ is strongly admissible for all $2\leq i\leq t+1$.

Since the above vertices are all strongly admissible, then by Lemma \ref{strong}, there is no normal nonabelian Sylow $p$-subgroup for any $p\in\{a_1,a_2\}\cup\{b_1,b_3\}\cup\{c_1,c_2,\ldots,c_{t+1}\}$.
\end{proof}

We next address the remaining vertex of the graph $\SLi{2}{1}{(t+1)}$, which is the prime $b_2$.

\begin{lemma}\label{lem21pi}
Suppose that we are under the conditions of Hypothesis \ref{hyp21mL}. Further suppose that $\SLi{2}{1}{(t+1)}=\Delta(G)$ for some finite solvable group $G$, where $|G|$ is minimal. Then $G$ does not have any normal nonabelian Sylow $b_2$-subgroup.
\end{lemma}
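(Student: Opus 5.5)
The plan is to apply Lemma \ref{pi} with $q=b_2$. The vertex $b_2$ is not strongly admissible in the way the other vertices were in Lemma \ref{lem21ad}: removing it leaves the vertex $b_3$ attached only through $a_1$ and $b_1$. So instead of Lemma \ref{strong} I would verify the combinatorial hypotheses of Lemma \ref{pi} directly in $\Gamma=\SLi{2}{1}{(t+1)}$.

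First I check that $\Gamma$ satisfies P\'alfy's condition. This is automatic if $\Gamma=\Delta(G)$ by Lemma \ref{PC}. It can also be seen directly: any three vertices either contain two from one of the cliques $A\cup C$ or $B$, and these are adjacent.

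Next I read off the neighbourhoods of $b_2$. Since $n=1$, $b_2$ is adjacent only to $a_2$, $b_1$ and $b_3$, so $\pi=\{a_2,b_1,b_3\}$. The non-neighbours are $\rho=\{a_1,c_1,\ldots,c_{t+1}\}$.

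The only edge-free split of $\pi$ keeps $b_1$ and $b_3$ together, because they are adjacent. The vertex $a_2$ is adjacent to neither $b_1$ nor $b_3$: its only edge into $B$ is $\epsilon(a_2,b_2)$, and $b_3=b_{k+1}$ is attached only to $a_1$. The natural first attempt, $\pi_2=\{a_2\}$, fails. Indeed, $a_2$ is adjacent to every vertex of $\rho$, so no suitable $w$ exists.

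So I would take $\pi_1=\{a_2\}$ and $\pi_2=\{b_1,b_3\}$, with $v=b_1$. The vertex $v$ is adjacent to $s=a_1\in\rho$, and $a_1$ is (strongly) admissible by Lemma \ref{lem21ad}. For the vertex $w$, I take $w=c_1\in\rho$, which is not adjacent to $b_1$ because $\SLi{2}{1}{(t+1)}$ has no edges between $B$ and $C$.

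The remaining hypothesis is that $\Delta(N)$ is a proper subgraph of $\Delta(G)$ for every proper normal subgroup $N$ of $G$. I would get this from the minimality of $|G|$. If $\Delta(N)=\Gamma$ for some proper normal $N$, then $N$ is a solvable group of smaller order with $\Delta(N)=\SLi{2}{1}{(t+1)}$, contradicting minimality. Note that $\Delta(N)\subseteq\Delta(G)$ always holds, by Clifford theory.

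With all hypotheses verified, Lemma \ref{pi} gives that a Sylow $b_2$-subgroup of $G$ is not normal. In particular, $G$ has no normal nonabelian Sylow $b_2$-subgroup.

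I expect the main obstacle to be choosing the partition correctly: the obvious choice $\pi_2=\{a_2\}$ does not work. The other potential subtlety is confirming that minimality genuinely yields the proper-subgraph condition on normal subgroups. I expect that to be routine here, since the paper uses the same setup in Lemma \ref{lem21ad}.
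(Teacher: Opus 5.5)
Your proposal is correct and is exactly the paper's argument: apply Lemma \ref{pi} with $q=b_2$, $\pi_1=\{a_2\}$, $\pi_2=\{b_1,b_3\}$, $v=b_1$, $s=a_1$ (admissible by Lemma \ref{lem21ad}) and $w=c_1$. Your explicit checks of P\'alfy's condition and of the proper-subgraph hypothesis (via Clifford theory and the minimality of $|G|$) are left implicit in the paper, and they are correct.
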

\begin{proof}
Following the notation of Lemma \ref{pi}, we consider $q=b_2$, and consequently $\pi=\{a_2,b_1,b_3\}$ and $\rho=\{a_1,c_1,c_2,\ldots,c_{t+1}\}$. We then denote $\pi_1=\{a_2\}$ and $\pi_2=\{b_1,b_3\}$, which allows us to consider $v=b_1\in\pi_2$ and $s=a_1\in\rho$, where we know $s$ is admissible by Lemma \ref{lem21ad}. Finally we take $w=c_1\in\rho$, which we note is not adjacent to $v$. Hence, the result follows by Lemma \ref{pi}.
\end{proof}

\begin{lemma}\label{lem21mLcon}
Suppose that we are under the conditions of Hypothesis \ref{hyp21mL}. Then no proper connected subgraph of $\SLi{2}{1}{(t+1)}$ with the same vertex set occurs as the prime character degree graph of any solvable group.
\end{lemma}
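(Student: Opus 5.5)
Any proper connected subgraph $\Gamma$ of $\SLi{2}{1}{(t+1)}$ with the same vertex set is obtained by deleting at least one edge, so it suffices to show that every single edge of $\SLi{2}{1}{(t+1)}$ is one whose loss already forces non-occurrence. I would then argue that deleting further edges preserves whichever obstruction was used. For P\'alfy-type obstructions this is automatic, since a triangle (or odd cycle) in the complement survives the removal of more edges. For the edges not handled by P\'alfy, I would appeal to the ``proper connected subgraph'' addendum of Theorem \ref{KT} or to a diameter argument. The strategy is therefore to sort the edges into classes according to the obstruction each one triggers, reusing the case analysis from the proof of Lemma \ref{lem21ad}.

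First, the P\'alfy edges, whose loss creates an independent triple, so that Lemma \ref{PC} applies to every graph missing them:
\begin{itemize}
\item $\epsilon(a_1,c_i)$, with witness $b_2$;
\item $\epsilon(a_2,c_i)$, with witness $b_1$;
\item $\epsilon(c_i,c_j)$, with witness $b_1$;
\item $\epsilon(b_1,b_2)$, $\epsilon(b_1,b_3)$ and $\epsilon(b_2,b_3)$, each with witness $c_1$.
\end{itemize}
Second, $\epsilon(a_1,a_2)$: losing it creates the $5$-cycle $a_1,a_2,b_3,c_1,b_2$ in the complement, so non-occurrence follows from \cite{A}.

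Any $\Gamma$ missing one of the edges above is therefore excluded. The remaining case is that $\Gamma$ keeps all of them and lacks only some nonempty subset of the three edges $\epsilon(a_1,b_1)$, $\epsilon(a_1,b_3)$, $\epsilon(a_2,b_2)$. Connectivity of $\Gamma$ forces at least one of these three to survive.

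To handle this case, I would observe that $\Gamma$ then consists of the complete graph on $\{a_1,a_2,c_1,\dots,c_{t+1}\}$ and the complete graph on $\{b_1,b_2,b_3\}$, joined by a proper nonempty subset of those three bridging edges. Such a graph has diameter three. I would apply Theorem \ref{diameter} with the labeling used in Lemma \ref{lem21ad}: take a vertex $p$ of the $B$-side and a vertex $q=c_1$ at distance three. This gives $|\rho_3|<3$, or failing that contradicts part \eqref{diam2} or \eqref{diam3}, since the $B$-side has only three vertices while $|\rho_3\cup\rho_4|$ must be at least $2^{|\rho_1\cup\rho_2|}$.

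The delicate subcase is when both $a_1$-edges survive and only $\epsilon(a_2,b_2)$ is lost, or when exactly one bridging edge survives. Then $\rho_3$ must be identified carefully, and I would check each of the at most six configurations explicitly. Alternatively, when only one $a_i$ is joined to $B$, I would reduce to a $\Gamma_{k,t}$-shaped graph and invoke the addendum of Theorem \ref{KT}. I expect the main obstacle to be this careful verification of the diameter-three conditions across the few bridging configurations; the rest is a direct recycling of the arguments in Lemma \ref{lem21ad}.
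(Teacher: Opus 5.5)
Your proposal is correct and is essentially the paper's proof. Clique edges are excluded by P\'alfy's condition or the $5$-cycle from \cite{A}, obstructions that persist under further deletions, and lost bridging edges are excluded by diameter-three arguments as in Lemma \ref{lem21ad}. The only difference is in the case of two lost bridging edges: the paper cites the addendum of Theorem \ref{KT} for $\Gamma_{t+3,3}$, while your Sass count also works there. Indeed $|\rho_3|\le 2$ in all six configurations for either choice of endpoint, so the fallback to parts (ii) and (iii) of Theorem \ref{diameter} is never needed. Your alternative reduction to $\Gamma_{k,t}$ applies only when exactly one bridging edge survives, since $a_1$ adjacent to both $b_1$ and $b_3$ does not sit inside any $\Gamma_{k,t}$, but you do not need it.
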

\begin{proof}
In the course of verifying admissibility, we have already shown that none of the edges in the complete subgraphs on $\{a_1,a_2,c_1,c_2,\ldots,c_{t+1}\}$ and $\{b_1,b_2,b_3\}$ can be removed. This is due to P\'alfy's condition or its generalized version from \cite{A}. Therefore, we need only consider losing one or two of the edges $\epsilon(a_1,b_1)$, $\epsilon(a_1,b_3)$, and $\epsilon(a_2,b_2)$, since losing all three would result in a disconnected subgraph. As proven in Lemma \ref{lem21ad}, none of the three edges can be removed on its own. Losing exactly two of the edges listed above yields a proper connected subgraph of $\Gamma_{t+3,3}$ with the same vertex set, which does not occur by Theorem \ref{KT}. The result follows.
\end{proof}

Under the assumption that there exists some finite solvable group $G$ with $|G|$ minimal such that $\Delta(G)=\SLi{2}{1}{(t+1)}$, we now have the outcome that there are no normal nonabelian Sylow $p$-subgroups associated with $G$, for all vertices $p\in\rho(G)$. This is due to Lemmas \ref{lem21ad} and \ref{lem21pi}. Consequently, we can then turn our attention to the Fitting subgroup of $G$, for which we follow convention and denote by $F$. We then note that $\rho(G)=\pi(|G:F|)$, and therefore $\rho(G)=\rho(G/\Phi(G))$, where $\Phi(G)$ is the Frattini subgroup of $G$. We aim to show that $\Phi(G)$ is trivial. There are two cases to consider: either $\Delta(G/\Phi(G))$ is connected, or $\Delta(G/\Phi(G))$ is disconnected.

For the first case, supposing that $\Delta(G/\Phi(G))$ is connected, we see that since $G$ and $G/\Phi(G)$ have the same vertex set, then Lemma \ref{lem21mLcon} guarantees that their prime character degree graphs must be the same. Therefore, $\Delta(G)=\Delta(G/\Phi(G))$, and because $|G|$ is minimal, we get that $\Phi(G)=1$, as desired.

For the second case, supposing that $\Delta(G/\Phi(G))$ is disconnected, we see that the complete components must have the vertex sets $\{a_1,a_2,c_1,\ldots,c_{t+1}\}$ and $\{b_1,b_2,b_3\}$. This may not explicitly violate P\'alfy's inequality from \cite{P2}, and therefore may occur. This will either satisfy Example 2.4 or Example 2.6 from \cite{L}. We can then follow the argument similar to that of Lemma 2.9 from \cite{BLL2}, thus reducing to the case that $\Phi(G)=1$, which was what we wanted.

Now that $\Phi(G)=1$, we can apply Lemma III 4.4 from \cite{H} to obtain the existence of some subgroup $H$ of $G$ such that $G=HF$ where $H\cap F$ is also trivial. Using this, we next aim to show that $F$ is minimal normal in $G$.

\begin{lemma}\label{lem21mLfit}
Suppose that we are under the conditions of Hypothesis \ref{hyp21mL}. Further suppose that $\SLi{2}{1}{(t+1)}=\Delta(G)$ for some finite solvable group $G$, where $|G|$ is minimal. Then the Fitting subgroup $\textit{F}$ of $G$ is minimal normal in $G$.
\end{lemma}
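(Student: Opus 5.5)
We follow the standard argument from \cite{BLL2} and \cite{LMS} for showing that the Fitting subgroup is minimal normal. By the preceding discussion we have $\Phi(G)=1$ and $G=HF$ with $H\cap F=1$. Since $\Phi(G)=1$, the Fitting subgroup $F$ is abelian and is a direct product of minimal normal subgroups of $G$, and it is complemented in $G$ by $H$. We argue by contradiction. Suppose $F$ is not minimal normal. Then we can write $F=M_1\times M_2$ with $M_1$ and $M_2$ nontrivial normal subgroups of $G$, each of which is a product of minimal normal subgroups of $G$. Each quotient $G/M_i$ is a proper quotient of $G$. By minimality of $|G|$, the graph $\Delta(G/M_i)$ is a proper subgraph of $\Gamma=\SLi{2}{1}{(t+1)}$. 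Because $G$ embeds into $G/M_1\times G/M_2$ (as $M_1\cap M_2=1$), every character degree of $G$ divides a product of a degree of $G/M_1$ and a degree of $G/M_2$. More usefully, the irreducible characters of $G$ are constituents of products of characters of the two quotients.

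Next we use the structure of $\Gamma$. Lemma \ref{lem21mLcon}, together with the non-occurrence results established in Lemma \ref{lem21ad}, already limits the possibilities sharply. Any subgraph of $\Gamma$ that is missing a vertex, or missing an edge while keeping the vertex set and staying connected, does not occur. So each $\Delta(G/M_i)$ must either be disconnected on the full vertex set $\rho(G)$ or omit vertices. Since $F$ is abelian and $H$ acts faithfully on $F$, every vertex $p\in\rho(G)$ divides $|G:F|=|H|$. This is because there are no normal nonabelian Sylow subgroups, by Lemmas \ref{lem21ad} and \ref{lem21pi}.

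The case analysis then runs as follows.
\begin{enumerate}[(i)]
\item If $\Delta(G/M_i)$ has the full vertex set, it is a disconnected subgraph. By P\'alfy's condition it has exactly two complete components, which must be $\{a_1,a_2,c_1,\ldots,c_{t+1}\}$ and $\{b_1,b_2,b_3\}$. Such groups are described by Lewis's classification in \cite{L} (Examples 2.4 and 2.6). In those examples the quotient has a specific structure whose Fitting subgroup is forced to be a minimal normal chief factor carrying all the degrees.
\item If $\Delta(G/M_i)$ misses vertices, it is an occurring subgraph of $\Gamma$ on fewer vertices. Here we use that the vertices are admissible: for an admissible vertex $p$, removal of $p$ gives a non-occurring graph. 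Hence $\rho(G/M_i)=\rho(G)$ for at least those $p$ that are admissible, and since every vertex except $b_2$ is admissible, only $b_2$ could be missing.
\end{enumerate}

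The key step, and the main obstacle, is to combine the information from $G/M_1$ and $G/M_2$ to produce all the edges of $\Gamma$. The cross edges $\epsilon(a_1,b_1)$, $\epsilon(a_1,b_3)$ and $\epsilon(a_2,b_2)$ must come from characters of $G$ that are not inflated from either quotient. Those are characters lying over $\lambda_1\times\lambda_2$ with both $\lambda_i\in\Irr(M_i)$ nontrivial. Their degrees are $|H:I_H(\lambda_1\times\lambda_2)|$ times a degree of the stabilizer, and this stabilizer is contained in $I_H(\lambda_1)$.

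For this step I would mimic Lemma 2.9 of \cite{BLL2}. If both quotients have disconnected graphs with the same two components, then $H$ has the structure from \cite{L} acting on each $M_i$. The orbit sizes on $\widehat{M_1}\times\widehat{M_2}$ then yield only degrees in $\cd(G/M_1)\cup\cd(G/M_2)$ multiplied by factors from those same components. That produces a disconnected $\Delta(G)$ or a degree containing some $b_j$ alongside $c_i$, which contradicts $\Gamma$. If instead $b_2$ is missing from one quotient, then the Sylow $b_2$-subgroup of $H$ centralizes that $M_i$. This should force the Sylow $b_2$-subgroup to be normal, or force $b_2$ to be adjacent to some $c_i$, which contradicts Lemma \ref{lem21pi} or the graph respectively. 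Either way we reach a contradiction, so $F$ is minimal normal in $G$.
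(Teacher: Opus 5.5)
\textbf{Verdict: there is a genuine gap.} Your setup matches the paper: $\Phi(G)=1$, $F$ abelian and complemented by $H$, $F=M_1\times M_2$, and proper quotients $G/M_i$. The argument fails at your case (ii).

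Admissibility of a vertex $p$ only says that the particular graph $\Gamma[p]$, and its variants from Definition~\ref{adstrong}, does not occur. It says nothing about smaller subgraphs of $\Gamma[p]$, and many of those do occur, e.g.\ $K_3$ on $\{b_1,b_2,b_3\}$, or the disconnected graph with components $\{b_1,b_2,b_3\}$ and $\{c_1\}$. So your claim that any subgraph of $\Gamma$ missing a vertex is non-occurring is false. The conclusion that ``only $b_2$ could be missing'' from $\rho(G/M_i)$ does not follow either: a quotient can a priori lose many admissible vertices at once. Most of the paper's proof is spent on exactly such cases, for instance $\rho(G/N)=\{b_1,b_2,b_3,c_1\}$.

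The correct way to use admissibility here is group-theoretic. Lemma~\ref{bigoh} gives $O^p(G)=G$ for every admissible $p$. Whenever $\Delta(G/N)$ is forced to be disconnected, Lewis's classification \cite{L} gives $G/N$ a central Sylow $p$-subgroup for some missing admissible $p$, so $O^p(G)<G$, a contradiction. The remaining vertex sets are excluded by non-occurrence results: two cut vertices \cite{MQ}, or the addendum to Theorem~\ref{KT}.

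You are also missing the two reductions that make the case analysis finite.
\begin{enumerate}[(1)]
\item Groups with a normal abelian Sylow $q$-subgroup form a formation, and $N\cap M=1$. Hence any $q\in\rho(G)\setminus\rho(G/N)$ lies in $\rho(G/M)$, so $\rho(G)=\rho(G/N)\cup\rho(G/M)$ with both sets proper.
\item Using $E=F(H)$, one obtains a character of $G$ whose degree is divisible by every prime in $\rho(G)\setminus[\rho(G/N)\cap\rho(G/M)]$. So this set lies in one of the cliques $\{b_1,b_2,b_3\}$, $\{a_1,b_1,b_3\}$, $\{a_2,b_2\}$, or $\{a_1,a_2,c_1,\ldots,c_{t+1}\}$. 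Each case is then eliminated; for the first, Gallagher's theorem makes $G/FE$ abelian, contradicting $O^{a_1}(G)=G$.
\end{enumerate}

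In place of these, your ``key step'' rests on unproved claims. One is that orbit sizes on $\widehat{M_1}\times\widehat{M_2}$ only produce degrees of a restricted form. The other is that a missing $b_2$ ``should force'' a normal Sylow $b_2$-subgroup. Even the first half of the latter is off. If $b_2\notin\rho(G/M_i)$ and $b_2\nmid|F|$, the normal abelian Sylow $b_2$-subgroup of $G/M_i$ shows that a Sylow $b_2$-subgroup of $H$ centralizes $F/M_i$, which is the \emph{other} factor. Nothing in your sketch gets from there to normality in $G$ or to an edge $\epsilon(b_2,c_i)$. Your case (i) likewise describes $G/M_i$ via \cite{L} but never derives a contradiction.
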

\begin{proof}
Following the aforementioned discussion, $F$ is the Fitting subgroup of $G$ and $H$ is defined as above. Furthermore, we set $E$ to be the Fitting subgroup of $H$. For the sake of contradiction, suppose that $F$ is not minimal normal in $G$. Consequently, we suppose that there exists some normal subgroup $N$ of $G$ such that $1<N<F$. We will show that no such $N$ can exist.

Notice that Theorem III 4.5 of \cite{H} guarantees the existence of another normal subgroup $M$ of $G$ such that $F=N\times M$. Since we know $N<F$ and $F=N\times M$, this then means that $M>1$. These subgroups $N$ and $M$ are now both nontrivial, and so $\rho(G/N)\subset\rho(G)$ and $\rho(G/M)\subset\rho(G)$, where we note that those are proper subsets. In particular, since we now have that $\rho(G)\setminus\rho(G/N)$ is nonempty, then for any prime $q\in\rho(G)\setminus\rho(G/N)$, the group $G/N$ will have a normal nonabelian Sylow $q$-subgroup whose class is a formation. This forces $q\in\rho(G/M)$. The fallout from this is that the vertex set corresponding to the prime character degree graph for $G$ must be the union of the vertex sets for those corresponding to $G/N$ and $G/M$. To wit, $\rho(G)=\rho(G/N)\cup\rho(G/M)$. Following the argument outlined in \cite{LMS}, for instance, we can then conclude that $\rho(G)\setminus[\rho(G/N)\cap\rho(G/M)]$ must lie in a complete subgraph of $\Delta(G)$, and so $\rho(G)\setminus[\rho(G/N)\cap\rho(G/M)]$ must lie in one of the following sets: $A=\{b_1,b_2,b_3\}$, $B=\{a_1,b_1,b_3\}$, $C=\{a_2,b_2\}$, or $D=\{a_1,a_2,c_1,c_2,\ldots,c_{t+1}\}$.

First suppose that $\rho(G)\setminus[\rho(G/N)\cap\rho(G/M)]\subseteq A$. One can follow the argument given in \cite{L3}, for example, and conclude that $E$ is in fact the Hall $A$-subgroup of $H$, and so we can find a character $\chi\in\Irr(G)$ such that $b_1b_2b_3\mid\chi(1)$. Denoting $\theta$ as an irreducible constituent of $\chi_{FE}$, one can verify that $\chi(1)/\theta(1)$ divides $|G:FE|$ and that $\chi(1)$ is relatively prime to $|G:FE|$, which yields $\chi_{FE}=\theta$. Since $b_1$, $b_2$, and $b_3$ all divide $\theta(1)$, and the only possible primes that could divide a character in $\cd(G/FE)$ are the vertices in $\rho(G)\setminus A=\{a_1,a_2,c_1,c_2,\ldots,c_{t+1}\}$, then we get that $\cd(G/FE)$ is trivial due to Gallagher's Theorem, and so $G/FE$ is abelian. This will then force $O^{a_1}(G)<G$, for instance, which is a contradiction, since we know that $a_1$ is admissible by Lemma \ref{lem21ad}, in which case $O^{a_1}(G)=G$ by Lemma \ref{bigoh}. Hence, this case cannot occur.

Next we suppose the case where $\rho(G)\setminus[\rho(G/N)\cap\rho(G/M)]\subseteq B$, which will then yield
$$
\{a_2,b_2,c_1,c_2,\ldots,c_{t+1}\}\subseteq\rho(G/N)\cap\rho(G/M).
$$
Since $b_1\in\rho(G)=\rho(G/N)\cup\rho(G/M)$, then we can, without loss of generality, suppose that $b_1\in\rho(G/N)$. Since we also know that $\rho(G/N)$ is proper in $\rho(G)$, we then have the following three options for the vertex set $\rho(G/N)$: (i) $\rho(G/N)=\{a_2,b_1,b_2,c_1,c_2,\ldots,c_{t+1}\}$, (ii) $\rho(G/N)=\{a_1,a_2,b_1,b_2,c_1,c_2,\ldots,c_{t+1}\}$, or (iii) $\rho(G/N)=\{a_2,b_1,b_2,b_3,c_1,c_2,\ldots,c_{t+1}\}$.

Under case (i), one can see that no connected subgraph with this vertex could occur, because it has two cut vertices (see \cite{MQ}). Therefore, the only graph that could occur is the disconnected subgraph with complete components on the vertex sets $\{a_2,c_1,c_2,\ldots,c_{t+1}\}$ and $\{b_1,b_2\}$. Following \cite{L}, we then know that $G/N$ has a central Sylow $a_1$-subgroup or a central Sylow $b_3$-subgroup, which implies either $O^{a_1}(G)<G$ or $O^{b_3}(G)<G$. This is a contradiction since both $a_1$ and $b_3$ were shown to be admissible, and so $O^{a_1}(G)=G$ and $O^{b_3}(G)=G$, again by Lemma \ref{bigoh}.

Next, suppose (ii), in which case we either end up with a connected proper subgraph of $\Gamma_{t+3,2}$, which does not occur by the addendum of Theorem \ref{KT}, or we have a disconnected subgraph resulting in a central Sylow $b_3$ subgroup of $G/N$, which cannot happen as $O^{b_3}(G)=G$ as above.

Finally for (iii), it is not hard to see that the only subgraph that could occur is disconnected (whereby the graphs that are connected would have two cut vertices, which cannot happen again by \cite{MQ}), resulting in $G/N$ having a central Sylow $a_1$-subgroup, which cannot happen as $a_1$ is admissible. Thus, this case also cannot occur.

We next consider the case where we suppose $\rho(G)\setminus[\rho(G/N)\cap\rho(G/M)]\subseteq C$. Notice that this implies that
$$
\{a_1,b_1,b_3,c_1,c_2,\ldots,c_{t+1}\}\subseteq\rho(G/N)\cap\rho(G/M).
$$
Since $\rho(G/N)$ and $\rho(G/M)$ are both proper in $G$, and because $\rho(G)=\rho(G/N)\cup\rho(G/M)$, then without loss of generality, we must have that
$$
\rho(G/N)=\{a_1,a_2,b_1,b_3,c_1,c_2,\ldots,c_{t+1}\}
$$
and
$$
\rho(G/M)=\{a_1,b_1,b_2,b_3,c_1,c_2,\ldots,c_{t+1}\}.
$$
However, one can check that no connected subgraph with the vertex set $\rho(G/M)$ occurs, and therefore the disconnected subgraph with complete components on the vertex sets $\{a_1,c_1,c_2,\ldots,c_{t+1}\}$ and $\{b_1,b_2,b_3\}$ must occur. Again following the results from \cite{L}, this then yields a central Sylow $a_2$-subgroup of $G/M$, which implies $O^{a_2}(G)<G$. This cannot happen, as $a_2$ is admissible by way of Lemma \ref{lem21ad}. Hence, this case also cannot happen.

Finally, we must therefore suppose that $\rho(G)\setminus[\rho(G/N)\cap\rho(G/M)]\subseteq D$. In this case, notice that this forces $\{b_1,b_2,b_3\}$ to be in both $\rho(G/N)$ and $\rho(G/M)$. Without loss of generality, we may also suppose $c_1\in\rho(G/N)$. Before we look at the cases for what $\rho(G/N)$ can be, we must first address the symmetry of all the vertices $c_i$ for all $1\leq i\leq t+1$. Since these vertices all behave the same, we can arrange them in order, which will help reduce our subsequent cases. We therefore fix the notation of letting $C_i^*:=\{c_2,c_3,\ldots,c_i\}$ for all $2\leq i\leq t+1$. With this in hand, we have eight cases for $\rho(G/N)$: (i) $\{b_1,b_2,b_3,c_1\}$, (ii) $\{a_1,b_1,b_2,b_3,c_1\}$, (iii) $\{a_2,b_1,b_2,b_3,c_1\}$, (iv) $\{a_1,a_2,b_1,b_2,b_3,c_1\}$, (v) $\{b_1,b_2,b_3,c_1\}\cup C_i^*$, (vi) $\{a_1,b_1,b_2,b_3,c_1\}\cup C_i^*$, (vii) $\{a_2,b_1,b_2,b_3,c_1\}\cup C_i^*$, or (viii) $\{a_1,a_2,b_1,b_2,b_3,c_1\}\cup C_i^*$ for $i\neq t+1$. We note that taking $i=t+1$ in the final case would imply that $\rho(G/N)=\rho(G)$, which we know does not happen. Next, we address every case.

Supposing case (i), we have that $\rho(G/N)=\{b_1,b_2,b_3,c_1\}$, and the only graph that arises is the disconnected subgraph with complete components with vertex sets $\{b_1,b_2,b_3\}$ and $\{c_1\}$. Again using \cite{L}, we then have that $G/N$ has a central Sylow $p$-subgroup for some $p\in\{a_1,a_2,c_2,\ldots,c_{t+1}\}$, in which case $O^p(G)<G$. This is our contradiction, as Lemma \ref{bigoh} forces $O^p(G)=G$ for all such relevant $p$ because they were shown to be admissible in Lemma \ref{lem21ad}. Therefore, this case cannot occur.

Next, for (ii), the only graph that arises with this vertex set is disconnected with complete components with vertex sets $\{a_1,c_1\}$ and $\{b_1,b_2,b_3\}$. We get the same contradiction as above.

For both cases (iii) and (iv), no graph (connected or disconnected) occurs with these vertex sets.

Supposing case (v), we see that no such connected graph occurs, but the disconnected graph with complete components with vertex sets $\{b_1,b_2,b_3\}$ and $\{c_1\}\cup C_i^*$ do arise. This, however, is handled identically to how case (i) transpired. This is mainly due to the fact that every vertex outside of $\rho(G/N)$ is admissible.

Case (vi) falls just like case (ii). Likewise, case (vii) behaves like case (iii), and case (viii) is tamed just like case (iv).

Hence, none of these cases can occur, which means our original supposition must have been false, meaning that no such $N$ can occur. Thus, the Fitting subgroup $F$ must be minimal normal in $G$.
\end{proof}

\begin{proposition}\label{prop21mL}
The graph $\SLi{2}{1}{m}$ does not occur as the prime character degree graph of any solvable group for all $m\geq2$.
\end{proposition}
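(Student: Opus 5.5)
We argue by induction on $m$. The base case $m=2$ is the graph $C_{18}$ of \cite{LMS}, which was shown there not to occur. For the inductive step we assume Hypothesis \ref{hyp21mL} for some $t\geq2$, and we suppose, for a contradiction, that $\SLi{2}{1}{(t+1)}=\Delta(G)$ for a solvable group $G$ with $|G|$ minimal. All the structural facts we need are already in place. Lemmas \ref{lem21ad} and \ref{lem21pi} show that $G$ has no normal nonabelian Sylow $p$-subgroup for any vertex $p$. The discussion following Lemma \ref{lem21mLcon} then gives $\Phi(G)=1$, hence a complement $H$ with $G=HF$ and $H\cap F=1$. Finally, Lemma \ref{lem21mLfit} shows that $F$ is minimal normal in $G$.

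The plan is to finish with Proposition \ref{final}. First we check its graph-theoretic hypotheses. The graph $\Gamma=\SLi{2}{1}{(t+1)}$ satisfies P\'alfy's condition. Indeed, its complement has no triangle: the vertices $\{a_1,a_2,c_1,\ldots,c_{t+1}\}$ span a clique and $\{b_1,b_2,b_3\}$ spans a clique, so any three vertices include two from the same clique. Also, $\Gamma$ has $t+6\geq8\geq5$ vertices. We choose $a=a_1$ and $c=b_1$; then $a_1$ is adjacent to $b_1$, and $b_1$ is admissible by Lemma \ref{lem21ad}. Next we take $b=c_1$, which is not adjacent to $b_1$. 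Finally we take $d=b_2$, which is not adjacent to $a_1$.

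The one point needing care is the admissibility of $d=b_2$. Lemma \ref{lem21ad} covers only $b_1$ and $b_3$, while Lemma \ref{lem21pi} treats $b_2$ only through Lemma \ref{pi}. I would therefore avoid $b_2$ and use $d=c_1$ instead: $c_1$ is strongly admissible by Lemma \ref{lem21ad} and is not adjacent to $a_1$'s neighbour $b_1$. But $c_1$ is adjacent to $a_1$, so that choice fails for the pair $(a,d)$. We must therefore swap roles. We take $a=b_1$, which is adjacent to the admissible vertex $c=a_1$. We take $b=b_2$, which is not adjacent to $a_1$. We take $d=c_1$, which is admissible and not adjacent to $b_1$. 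All of $a,b,c,d$ are distinct, so this choice satisfies every adjacency condition of Proposition \ref{final} using only vertices proven admissible in Lemma \ref{lem21ad}.

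It remains to verify the group-theoretic hypothesis: for every proper normal subgroup $N$ of $G$, both $\Delta(N)$ and $\Delta(G/N)$ must be proper subgraphs of $\Gamma$. For quotients this follows from the minimality of $|G|$ together with Lemma \ref{lem21mLcon}. If $\Delta(G/N)$ had the full vertex set, it would either be all of $\Gamma$, contradicting minimality, or be disconnected, which is handled as in the $\Phi(G)$ discussion. For normal subgroups $N$ we argue as in \cite{BLL2}: if $\Delta(N)=\Gamma$, we replace $G$ by $N$, again contradicting minimality. This verification is the step I expect to be the main obstacle, since it requires the same case analysis as the $\Phi(G)=1$ reduction; I would import it directly from Lemma 2.9 of \cite{BLL2}. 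With every hypothesis met, Proposition \ref{final} shows that $\Gamma$ does not occur, contradicting $\Delta(G)=\Gamma$. This completes the induction, and Proposition \ref{prop21mL} holds for all $m\geq2$.
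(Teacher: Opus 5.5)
Your proposal is correct and follows the paper's route: induction on $m$, the preceding lemmas to reach $F$ minimal normal, and then Proposition \ref{final}. Your choice $a=b_1$, $b=b_2$, $c=a_1$, $d=c_1$ differs from the paper's $a=a_2$, $b=b_3$, $c=c_1$, $d=b_1$, but it meets all the adjacency and admissibility requirements just as well, and you were right not to rely on $b_2$ being admissible. The step you flag as the main obstacle is in fact immediate: $\Delta(N)$ and $\Delta(G/N)$ are always subgraphs of $\Delta(G)$, so minimality of $|G|$ alone makes them proper, and neither Lemma \ref{lem21mLcon} nor any case analysis from \cite{BLL2} is needed there.
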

\begin{proof}
For the sake of contradiction, suppose that we are under the conditions of Hypothesis \ref{hyp21mL} and suppose $\SLi{2}{1}{(t+1)}=\Delta(G)$ for some finite solvable group $G$, where $|G|$ is minimal. Observe that, by Lemma \ref{lem21mLfit}, the Fitting subgroup $F$ of $G$ is minimal normal in $G$. Next, we follow the notation of Proposition \ref{final}, taking $a=a_2$, $b=b_3$, $c=c_1$, and $d=b_1$, where we note that $c$ and $d$ are admissible by Lemma \ref{lem21ad}. Our contradiction arises when we apply Proposition \ref{final}, and thus we have proven the nonoccurrence of $\SLi{2}{1}{(t+1)}$. This completes our inductive step, and therefore $\SLi{2}{1}{m}$ does not occur as the prime character degree graph of any solvable group for all $m\geq2$.
\end{proof}

We have now dealt with the case of $n=1$. Notice that Proposition \ref{prop21mL} classifies the graph $\SLi{2}{1}{3}$ as non-occurring, which is isomorphic to the graph $C_7$ in \cite{LS2}. Therefore this graph $C_7$ is officially classified. Now we turn our attention to the case of $n=2$, and one can see Figure \ref{fig22mL} for examples of such graphs.

\begin{figure}[htb]
    \centering
$
\begin{tikzpicture}[scale=2]
\node (xc1) at (0,1) {$c_1$};
\node (xc2) at (0,0) {$c_2$};
\node (xa1) at (.5,.8) {$a_1$};
\node (xa2) at (.5,.2) {$a_2$};
\node (xb3) at (1,.7) {$b_3$};
\node (xb4) at (1,.3) {$b_4$};
\node (xb1) at (1.5,1) {$b_1$};
\node (xb2) at (1.5,0) {$b_2$};
\path[font=\small,>=angle 90]
(xc1) edge node [right] {$ $} (xc2)
(xa1) edge node [right] {$ $} (xa2)
(xb1) edge node [right] {$ $} (xb2)
(xb1) edge node [right] {$ $} (xb3)
(xb1) edge node [right] {$ $} (xb4)
(xb2) edge node [right] {$ $} (xb3)
(xb2) edge node [right] {$ $} (xb4)
(xb3) edge node [right] {$ $} (xb4)
(xc1) edge node [right] {$ $} (xa1)
(xc1) edge node [right] {$ $} (xa2)
(xc2) edge node [right] {$ $} (xa1)
(xc2) edge node [right] {$ $} (xa2)
(xa1) edge node [right] {$ $} (xb1)
(xa2) edge node [right] {$ $} (xb2)
(xa1) edge node [right] {$ $} (xb3)
(xa2) edge node [right] {$ $} (xb4);
\node (yc1) at (2.75,1) {$c_1$};
\node (yc2) at (2.75,0) {$c_2$};
\node (yc3) at (2.25,.5) {$c_3$};
\node (ya1) at (3.25,.8) {$a_1$};
\node (ya2) at (3.25,.2) {$a_2$};
\node (yb3) at (3.75,.7) {$b_3$};
\node (yb4) at (3.75,.3) {$b_4$};
\node (yb1) at (4.25,1) {$b_1$};
\node (yb2) at (4.25,0) {$b_2$};
\path[font=\small,>=angle 90]
(yc1) edge node [right] {$ $} (yc2)
(ya1) edge node [right] {$ $} (ya2)
(yb1) edge node [right] {$ $} (yb2)
(yb1) edge node [right] {$ $} (yb3)
(yb1) edge node [right] {$ $} (yb4)
(yb2) edge node [right] {$ $} (yb3)
(yb2) edge node [right] {$ $} (yb4)
(yb3) edge node [right] {$ $} (yb4)
(yc1) edge node [right] {$ $} (ya1)
(yc1) edge node [right] {$ $} (ya2)
(yc2) edge node [right] {$ $} (ya1)
(yc2) edge node [right] {$ $} (ya2)
(ya1) edge node [right] {$ $} (yb1)
(ya2) edge node [right] {$ $} (yb2)
(ya1) edge node [right] {$ $} (yb3)
(ya2) edge node [right] {$ $} (yb4)
(yc3) edge node [right] {$ $} (yc1)
(yc3) edge node [right] {$ $} (yc2)
(yc3) edge node [right] {$ $} (ya1)
(yc3) edge node [right] {$ $} (ya2);
\node (zc1) at (5.5,1) {$c_1$};
\node (zc2) at (5.5,0) {$c_2$};
\node (zc3) at (5,.8) {$c_3$};
\node (zc4) at (5,.2) {$c_4$};
\node (za1) at (6,.8) {$a_1$};
\node (za2) at (6,.2) {$a_2$};
\node (zb3) at (6.5,.7) {$b_3$};
\node (zb4) at (6.5,.3) {$b_4$};
\node (zb1) at (7,1) {$b_1$};
\node (zb2) at (7,0) {$b_2$};
\path[font=\small,>=angle 90]
(zc1) edge node [right] {$ $} (zc2)
(za1) edge node [right] {$ $} (za2)
(zb1) edge node [right] {$ $} (zb2)
(zb1) edge node [right] {$ $} (zb3)
(zb1) edge node [right] {$ $} (zb4)
(zb2) edge node [right] {$ $} (zb3)
(zb2) edge node [right] {$ $} (zb4)
(zb3) edge node [right] {$ $} (zb4)
(zc1) edge node [right] {$ $} (za1)
(zc1) edge node [right] {$ $} (za2)
(zc2) edge node [right] {$ $} (za1)
(zc2) edge node [right] {$ $} (za2)
(za1) edge node [right] {$ $} (zb1)
(za2) edge node [right] {$ $} (zb2)
(za1) edge node [right] {$ $} (zb3)
(za2) edge node [right] {$ $} (zb4)
(zc3) edge node [right] {$ $} (zc1)
(zc3) edge node [right] {$ $} (zc2)
(zc3) edge node [right] {$ $} (za1)
(zc3) edge node [right] {$ $} (za2)
(zc4) edge node [right] {$ $} (zc1)
(zc4) edge node [right] {$ $} (zc2)
(zc4) edge node [right] {$ $} (zc3)
(zc4) edge node [right] {$ $} (za1)
(zc4) edge node [right] {$ $} (za2);
\end{tikzpicture}
$
    \caption{The graphs $\SLi{2}{2}{2}$, $\SLi{2}{2}{3}$, and $\SLi{2}{2}{4}$}
    \label{fig22mL}
\end{figure}

\begin{corollary}\label{cor22mL}
The graph $\SLi{2}{2}{m}$ does not occur as the prime character degree graph of any solvable group for all $m\geq2$.
\end{corollary}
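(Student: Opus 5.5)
We plan to prove the corollary by induction on $m$, directly from Theorem \ref{admissible}. The idea is to show that every vertex of $\SLi{2}{2}{m}$ is admissible, which avoids repeating the Fitting subgroup analysis of Lemma \ref{lem21mLfit}. For the base case we use the observation that $\SLi{2}{2}{1}=\SL{2}{2}$ does not occur by Theorem \ref{Lfam}. So we start at $m=1$ and assume that $\SLi{2}{2}{t}$ is non-occurring for some $t\geq1$. The inductive step is to show that $\SLi{2}{2}{(t+1)}$ is non-occurring.

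\emph{Condition (i) of Definition \ref{adstrong}.} Deleting a vertex should always land in a graph we already control.
\begin{itemize}
\item Deleting $c_i$ gives $\SLi{2}{2}{t}$, which is non-occurring by the inductive hypothesis.
\item Deleting $b_1$ leaves the complete graphs on $\{a_1,a_2,c_1,\ldots,c_{t+1}\}$ and $\{b_2,b_3,b_4\}$, joined by the edges $\epsilon(a_1,b_3)$, $\epsilon(a_2,b_2)$ and $\epsilon(a_2,b_4)$. After interchanging the roles of $a_1$ and $a_2$, this is exactly $\SLi{2}{1}{(t+1)}$, which does not occur by Proposition \ref{prop21mL}.
\item By the symmetry $a_1\leftrightarrow a_2$, $b_1\leftrightarrow b_2$, $b_3\leftrightarrow b_4$ and the symmetry $b_1\leftrightarrow b_3$, the same argument handles $b_2$, $b_3$ and $b_4$.
\item Deleting $a_1$ leaves $b_1$ and $b_3$ attached only to $\{b_2,b_4\}$, so $c_1$ is at distance three from $b_1$. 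With the labeling $p=b_1$, $q=c_1$ as in Lemma \ref{lem21ad}, the set $\rho_3$ is too small, contradicting Theorem \ref{diameter}\eqref{diam1}. Deleting $a_2$ is symmetric.
\end{itemize}

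\emph{Condition (ii) of Definition \ref{adstrong}.} Next we rule out every removal of edges incident to a single vertex.
\begin{itemize}
\item Losing an edge inside $B$, say $\epsilon(b_i,b_j)$, violates P\'alfy's condition with the triple $b_i,b_j,c_1$.
\item Losing $\epsilon(c_i,c_j)$ or $\epsilon(a_l,c_i)$ violates P\'alfy's condition with a $b$-vertex that is not adjacent to either endpoint.
\item Losing $\epsilon(a_1,a_2)$ creates a $5$-cycle in the complement, for example on $a_1,a_2,b_1,c_1,b_2$. This is excluded by the generalized P\'alfy condition of \cite{A}.
\item Losing one or both of the cross edges at $a_1$, namely $\epsilon(a_1,b_1)$ and $\epsilon(a_1,b_3)$, gives a graph of diameter three, for example with $b_1$ at distance three from $c_1$ via $b_1,b_2,a_2,c_1$. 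This again contradicts Theorem \ref{diameter}, as in Lemma \ref{lem21ad}.
\item When the vertex in question is $a_l$ and we remove a mixture of its cross edges together with $\epsilon(a_1,a_2)$ or $\epsilon(a_l,c_i)$, the lost inner edge already forces a P\'alfy or generalized P\'alfy violation.
\end{itemize}

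Once every vertex is shown to be admissible, Theorem \ref{admissible} gives that $\SLi{2}{2}{(t+1)}$ is not $\Delta(G)$ for any solvable $G$, which completes the induction. I expect the main obstacle to be the combined edge removals at the $a$-vertices, since each case needs a careful Sass-type counting in Theorem \ref{diameter}. In each such case I would first try to exhibit $\rho_3$ of size less than $3$, or to check whether $|\rho_1\cup\rho_2|=2$ forces $|\rho_3\cup\rho_4|\geq4$ to fail. If those counts do not close a case, I would instead recognize the graph as a proper connected subgraph of some $\Gamma_{k,t}$ and apply the addendum of Theorem \ref{KT}.
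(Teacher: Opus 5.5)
Your proposal is correct and follows essentially the paper's proof: induction on $m$, showing every vertex admissible by the same reductions (deleting $b_1$ gives $\SLi{2}{1}{(t+1)}$, deleting $c_1$ gives the inductive hypothesis, the $a$-vertex deletions and cross-edge losses give diameter-three graphs excluded by Theorem \ref{diameter}, and inner edges fall to P\'alfy or its generalization), then applying Theorem \ref{admissible}. The one difference is that you start at $m=1$ via Theorem \ref{Lfam} instead of at $m=2$ via $D_{19}$ from \cite{LS2}. This is valid, because the step at $t=1$ only needs $\SLi{2}{2}{1}$ and $\SLi{2}{1}{2}$ (Proposition \ref{prop21mL}), so your argument also recovers the non-occurrence of $D_{19}$.
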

\begin{proof}
Let $m\in\mathbb{N}$ such that $m\geq2$. We proceed by induction on $m$. Notice that for the base case of $m=2$, the graph $\SLi{2}{2}{2}$ was shown not to occur as the prime character degree graph of any solvable group in \cite{LS2}. In that paper, following the label given by Lewis and Summers, the graph $\SLi{2}{2}{2}$ is exactly the graph $D_{19}$.

For the inductive hypothesis, given any integer $t\geq2$, we assume that the graph $\SLi{2}{2}{t}$ does not occur as the prime character degree graph of any solvable group. Therefore, for the inductive step, we aim to show that the graph $\SLi{2}{2}{(t+1)}$ also does not occur as the prime character degree graph of any solvable group. Following the argument in \cite{LS2}, we will show that every vertex is admissible.

We begin with showing that the vertex $a_1$ is admissible. By removing $a_1$ and all incident edges, we are left with a graph of diameter three. This graph $\SLi{2}{2}{(t+1)}[a_1]$ violates the main result from \cite{S} (see Theorem \ref{diameter}). Removal of the edge between $a_1$ and $a_2$ from the graph $\SLi{2}{2}{(t+1)}$ yields a five-cycle in its complement, and therefore violates the generalized P\'alfy's condition from \cite{A}. Therefore, the edge $\epsilon(a_1,a_2)$ cannot be lost. Next, losing any edge $\epsilon(a_1,c_i)$ for $1\leq i\leq t+1$ violates P\'alfy's condition with $a_1$, $b_2$, and $c_i$, and hence, none of those edges can be lost as well. Finally, the vertices $b_1$ and $b_3$ are symmetric, and therefore the edges $\epsilon(a_1,b_1)$ and $\epsilon(a_1,b_3)$ are symmetric. Losing either or both of the aforementioned edges results in a graph of diameter three which will violate the main result from \cite{S}, and hence neither of those edges can be lost. We now have that $a_1$ satisfies the first two conditions from Definition \ref{adstrong}, and therefore is admissible. By symmetry, $a_2$ is also admissible.

Next, we consider the vertex $b_1$. Removing $b_1$ and all incident edges results in the graph $\SLi{2}{2}{(t+1)}[b_1]=\SLi{2}{1}{(t+1)}$, which was shown not to occur in Proposition \ref{prop21mL}. Removal of the edge between $b_1$ and $a_1$ from the graph $\SLi{2}{2}{(t+1)}$ was already considered above, and was shown it cannot be lost. Moreover, removal of any edge $\epsilon(b_1,b_j)$ for $2\leq j\leq4$ results in a violation of P\'alfy's condition with the trio $b_1$, $b_j$, and $c_1$, and therefore none of those edges can be lost either. Thus, $b_1$ is an admissible vertex, and by a symmetric argument, so are $b_2$, $b_3$, and $b_4$.

Finally, we consider the vertex $c_1$. Removing $c_1$ and all incident edges results in the graph $\SLi{2}{2}{(t+1)}[c_1]=\SLi{2}{2}{t}$, which does not occur as the prime character degree graph of any solvable group by the inductive hypothesis. The removal of the edge $\epsilon(a_1,c_1)$ was already considered above, and it was shown it cannot be lost. Likewise for $\epsilon(a_2,c_1)$. Finally, we see that removing any edge $\epsilon(c_1,c_i)$ for $2\leq i\leq t+1$ results in a violation of P\'alfy's condition with $c_1$, $c_i$, and $b_1$, and hence none of those edges can be lost. Thus, $c_1$ is an admissible vertex. Again by symmetry, we see that $c_i$ is also admissible for all $2\leq i\leq t+1$.

Since every vertex in the graph $\SLi{2}{2}{(t+1)}$ is admissible, then we know $\SLi{2}{2}{(t+1)}$ does not occur as the prime character degree graph of any solvable group by Theorem \ref{admissible}. Thus, by induction, the graph $\SLi{2}{2}{m}$ does not occur as the prime character degree graph of any solvable group for all $m\geq2$.
\end{proof}

\subsection{The case of $k\geq3$}

The main reason for handling the case of $k=2$ separately is that it can be viewed as the base case of part of an induction argument. In particular, our first goal here is to show that the graph $\SLi{k}{1}{m}$ does not occur for all $k\geq2$ and for all $m\geq1$.

Inducting first on $k$, we can consider the base case of $k=2$, which yields the graphs $\SLi{2}{1}{m}$. These graphs are shown not to occur for all $m\geq1$ by Proposition \ref{prop21mL}, noting that the case of $m=1$ yields a graph with six vertices classified as non-occurring in \cite{BLL}. Next, as our inductive hypothesis, given any integer $v\geq2$, we assume that the graph $\SLi{v}{1}{m}$ does not occur for all $m\geq1$. Then we would aim to show that the graph $\SLi{v+1}{1}{m}$ does not occur as the prime character degree graph of any solvable group for all $m\geq1$.

In order to show this nonoccurrence, we would next need to induct on $m$. For the base case of $m=1$, notice that the graph $\SLi{v+1}{1}{1}=\SL{v+1}{1}$ was shown not to occur by the main theorem in \cite{LM} (see Theorem \ref{Lfam}). Furthermore, for this inductive step, given any integer $t\geq1$, we assume that the graph $\SLi{v+1}{1}{t}$ does not occur as the prime character degree graph of any solvable group, and then we would aim to show the nonoccurrence of $\SLi{v+1}{1}{(t+1)}$. Before jumping into the proof of this, we organize our assumptions as follows:

\begin{hyp}\label{hypk1mL}
Given any integers $v\geq2$ and $t\geq1$, we assume the following:
\begin{enumerate}[(i)]
    \item\label{hypk1i} the graph $\SLi{v}{1}{m}$ does not occur for all $m\geq1$, and
    \item\label{hypk1ii} the graph $\SLi{v+1}{1}{t}$ does not occur.
\end{enumerate}
\end{hyp}

One can see Figure \ref{figk1mL} for examples of graphs in this family, which correspond to those in Proposition \ref{propk1mL}.

\begin{figure}[htb]
    \centering
$
\begin{tikzpicture}[scale=2]
\node (wc1) at (0,0.8) {$c_1$};
\node (wc2) at (0,0.2) {$c_2$};
\node (wa2) at (0.5,1) {$a_2$};
\node (wa3) at (0.5,0) {$a_3$};
\node (wa1) at (1,0.5) {$a_1$};
\node (wb1) at (1.5,0.75) {$b_1$};
\node (wb4) at (1.5,0.25) {$b_4$};
\node (wb2) at (2,1) {$b_2$};
\node (wb3) at (2,0) {$b_3$};
\path[font=\small,>=angle 90]
(wc1) edge node [right] {$ $} (wc2)
(wc1) edge node [right] {$ $} (wa2)
(wc1) edge node [right] {$ $} (wa3)
(wc1) edge node [right] {$ $} (wa1)
(wc2) edge node [right] {$ $} (wa2)
(wc2) edge node [right] {$ $} (wa3)
(wc2) edge node [right] {$ $} (wa1)
(wa2) edge node [right] {$ $} (wa3)
(wa2) edge node [right] {$ $} (wa1)
(wa3) edge node [right] {$ $} (wa1)
(wa3) edge node [right] {$ $} (wb3)
(wb1) edge node [right] {$ $} (wb2)
(wb1) edge node [right] {$ $} (wb4)
(wb1) edge node [right] {$ $} (wb3)
(wb2) edge node [right] {$ $} (wb4)
(wb2) edge node [right] {$ $} (wb3)
(wb4) edge node [right] {$ $} (wb3)
(wa1) edge node [right] {$ $} (wb1)
(wa1) edge node [right] {$ $} (wb4)
(wa2) edge node [right] {$ $} (wb2);
\node (xa1) at (4.25,0.5) {$a_1$};
\node (xa2) at (3.75,1) {$a_2$};
\node (xa3) at (3.75,0) {$a_3$};
\node (xc1) at (3.25,1) {$c_1$};
\node (xc2) at (3.25,0) {$c_2$};
\node (xc3) at (2.75,0.5) {$c_3$};
\node (xb1) at (4.75,0.75) {$b_1$};
\node (xb4) at (4.75,0.25) {$b_4$};
\node (xb2) at (5.25,1) {$b_2$};
\node (xb3) at (5.25,0) {$b_3$};
\path[font=\small,>=angle 90]
(xc1) edge node [right] {$ $} (xc2)
(xc1) edge node [right] {$ $} (xa2)
(xc1) edge node [right] {$ $} (xa3)
(xc1) edge node [right] {$ $} (xa1)
(xc2) edge node [right] {$ $} (xa2)
(xc2) edge node [right] {$ $} (xa3)
(xc2) edge node [right] {$ $} (xa1)
(xa2) edge node [right] {$ $} (xa3)
(xa2) edge node [right] {$ $} (xa1)
(xa3) edge node [right] {$ $} (xa1)
(xb1) edge node [right] {$ $} (xb2)
(xb1) edge node [right] {$ $} (xb4)
(xb1) edge node [right] {$ $} (xb3)
(xb2) edge node [right] {$ $} (xb4)
(xb2) edge node [right] {$ $} (xb3)
(xb4) edge node [right] {$ $} (xb3)
(xc1) edge node [right] {$ $} (xc3)
(xc3) edge node [right] {$ $} (xc2)
(xc3) edge node [right] {$ $} (xa1)
(xa2) edge node [right] {$ $} (xc3)
(xa3) edge node [right] {$ $} (xc3)
(xa1) edge node [right] {$ $} (xb1)
(xa1) edge node [right] {$ $} (xb4)
(xa2) edge node [right] {$ $} (xb2)
(xa3) edge node [right] {$ $} (xb3);
\node (ya1) at (0.5,-.7) {$a_1$};
\node (ya2) at (0.5,-1.3) {$a_2$};
\node (ya3) at (0,-.5) {$a_3$};
\node (ya4) at (0,-1.5) {$a_4$};
\node (yc1) at (-.5,-0.7) {$c_1$};
\node (yc2) at (-.5,-1.3) {$c_2$};
\node (yb1) at (2,-0.7) {$b_1$};
\node (yb2) at (2,-1.3) {$b_2$};
\node (yb3) at (1.5,-.5) {$b_3$};
\node (yb4) at (1.5,-1.5) {$b_4$};
\node (yb5) at (1,-1) {$b_5$};
\path[font=\small,>=angle 90]
(yc1) edge node [right] {$ $} (yc2)
(yc1) edge node [right] {$ $} (ya1)
(yc1) edge node [right] {$ $} (ya2)
(yc1) edge node [right] {$ $} (ya3)
(yc2) edge node [right] {$ $} (ya2)
(yc2) edge node [right] {$ $} (ya3)
(yc2) edge node [right] {$ $} (ya1)
(ya2) edge node [right] {$ $} (ya3)
(ya2) edge node [right] {$ $} (ya1)
(ya3) edge node [right] {$ $} (ya1)
(yb1) edge node [right] {$ $} (yb2)
(yb1) edge node [right] {$ $} (yb4)
(yb1) edge node [right] {$ $} (yb3)
(yb2) edge node [right] {$ $} (yb4)
(yb2) edge node [right] {$ $} (yb3)
(yb4) edge node [right] {$ $} (yb3)
(ya1) edge node [right] {$ $} (yb1)
(ya2) edge node [right] {$ $} (yb2)
(ya3) edge node [right] {$ $} (yb3)
(ya3) edge node [right] {$ $} (yb3)
(ya4) edge node [right] {$ $} (yc2)
(ya4) edge node [right] {$ $} (yc1)
(ya4) edge node [right] {$ $} (ya3)
(ya4) edge node [right] {$ $} (ya1)
(ya4) edge node [right] {$ $} (ya2)
(ya1) edge node [right] {$ $} (yb5)
(yb5) edge node [right] {$ $} (yb1)
(yb5) edge node [right] {$ $} (yb2)
(yb3) edge node [right] {$ $} (yb5)
(yb4) edge node [right] {$ $} (yb5)
(ya4) edge node [right] {$ $} (yb4);
\node (za1) at (4.25,-0.7) {$a_1$};
\node (za2) at (4.25,-1.3) {$a_2$};
\node (za3) at (3.75,-0.5) {$a_3$};
\node (za4) at (3.75,-1.5) {$a_4$};
\node (zc1) at (3.25,-0.6) {$c_1$};
\node (zc2) at (3.25,-1.4) {$c_2$};
\node (zc3) at (2.75,-1) {$c_3$};
\node (zb1) at (5.75,-0.7) {$b_1$};
\node (zb2) at (5.75,-1.3) {$b_2$};
\node (zb3) at (5.25,-0.5) {$b_3$};
\node (zb4) at (5.25,-1.5) {$b_4$};
\node (zb5) at (4.75,-1) {$b_5$};
\path[font=\small,>=angle 90]
(zb3) edge node [right] {$ $} (zb5)
(zb4) edge node [right] {$ $} (zb5)
(zc1) edge node [right] {$ $} (zc2)
(zc1) edge node [right] {$ $} (za2)
(zc1) edge node [right] {$ $} (za3)
(zc1) edge node [right] {$ $} (za1)
(zc2) edge node [right] {$ $} (za2)
(zc2) edge node [right] {$ $} (za3)
(zc2) edge node [right] {$ $} (za1)
(za2) edge node [right] {$ $} (za3)
(za2) edge node [right] {$ $} (za1)
(za3) edge node [right] {$ $} (za1)
(zb1) edge node [right] {$ $} (zb2)
(zb1) edge node [right] {$ $} (zb4)
(zb1) edge node [right] {$ $} (zb3)
(zb2) edge node [right] {$ $} (zb4)
(zb2) edge node [right] {$ $} (zb3)
(zb4) edge node [right] {$ $} (zb3)
(zc1) edge node [right] {$ $} (zc3)
(zc3) edge node [right] {$ $} (zc2)
(zc3) edge node [right] {$ $} (za1)
(za2) edge node [right] {$ $} (zc3)
(za3) edge node [right] {$ $} (zc3)
(za1) edge node [right] {$ $} (zb1)
(za2) edge node [right] {$ $} (zb2)
(za3) edge node [right] {$ $} (zb3)
(za3) edge node [right] {$ $} (zb3)
(za4) edge node [right] {$ $} (zc2)
(za4) edge node [right] {$ $} (zc3)
(za4) edge node [right] {$ $} (zc1)
(za4) edge node [right] {$ $} (za3)
(za4) edge node [right] {$ $} (za1)
(za4) edge node [right] {$ $} (za2)
(za1) edge node [right] {$ $} (zb5)
(zb5) edge node [right] {$ $} (zb1)
(zb5) edge node [right] {$ $} (zb2)
(za4) edge node [right] {$ $} (zb4);
\end{tikzpicture}
$
    \caption{The graphs $\SLi{3}{1}{2}$, $\SLi{3}{1}{3}$, $\SLi{4}{1}{2}$, and $\SLi{4}{1}{3}$}
    \label{figk1mL}
\end{figure}

\begin{proposition}\label{propk1mL}
The graph $\SLi{k}{1}{m}$ does not occur as the prime character degree graph of any solvable group for all $k\geq3$ and $m\geq2$.
\end{proposition}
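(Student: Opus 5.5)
We prove Proposition \ref{propk1mL} by the double induction set up in Hypothesis \ref{hypk1mL}. We assume both parts of the hypothesis and show that $\Gamma:=\SLi{v+1}{1}{(t+1)}$ does not occur; together with the base cases noted before the hypothesis (Proposition \ref{prop21mL} for $k=2$, Theorem \ref{Lfam} for $m=1$), this covers all $k\geq3$, $m\geq2$. Since the graph for $\SLi{2}{2}{m}$ fell to Theorem \ref{admissible} outright, the cleanest plan is to show that \emph{every} vertex of $\Gamma$ is admissible. The fallback, if some vertex fails, is the longer Fitting-subgroup route of Lemmas \ref{lem21ad}--\ref{lem21mLfit} and Proposition \ref{final}.

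\textbf{Vertex removals (condition (i)).} Removing any $c_j$ gives $\SLi{v+1}{1}{t}$, which does not occur by Hypothesis \ref{hypk1mL}\eqref{hypk1ii}. Removing $b_{k+1}=b_{v+2}$ gives $\SLi{v+1}{0}{(t+1)}$, where each $a_i$ is matched to $b_i$ only. This is $\Gamma_{v+1+t+1,\,v+1}$ with $A\cup C$ as the large clique, so it does not occur by Theorem \ref{KT}, since $v+1\geq3$. Removing $b_1$ gives the same kind of perfect-matching graph between $A\cup C$ and $\{b_2,\ldots,b_{v+2}\}$, with $a_1$ matched to $b_{v+2}$ and $a_i$ to $b_i$ for $i\geq2$; again Theorem \ref{KT} applies. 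For $2\leq i\leq v+1$, removing $b_i$ leaves a graph with $a_i$ attached only to $A\cup C$, and removing $a_i$ leaves $b_i$ attached only to $B$. I plan to handle both with the diameter-three obstruction of Theorem \ref{diameter}. Removing $a_i$ should give diameter three with $|\rho_3|$ too small. Removing $b_i$, relabeling instead yields a subgraph of the form $\SLi{v}{1}{(t+1)}$ plus a pendant-type vertex, which I will try to reduce to Hypothesis \ref{hypk1mL}\eqref{hypk1i}. If that reduction fails, the graph has diameter at most two, and I would instead look for a $5$-cycle in the complement (\cite{A}). Removing $a_1$ (the vertex of degree two into $B$) gives diameter three, handled as in Lemma \ref{lem21ad}.

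\textbf{Edge removals (condition (ii)).} Edges inside $A\cup C$ or inside $B$ are protected by P\'alfy's condition (Lemma \ref{PC}) using a third vertex in the opposite clique not adjacent to either endpoint. This works because $v+1\geq3$ leaves enough room: for two vertices of $A\cup C$, pick a $b_j$ matched to neither; for two $b$'s, use $c_1$. The one exception is $\epsilon(a_i,a_j)$, where the two $a$'s are adjacent to different $b$'s. Here a $b$ avoiding both always exists when $v+1\geq3$, so plain P\'alfy suffices. For the cross edges $\epsilon(a_i,b_i)$ and $\epsilon(a_1,b_{v+2})$, removing any nonempty subset that leaves the graph connected gives a proper connected subgraph of some $\Gamma_{\ast,\ast}$, or a graph of diameter three. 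Removing one of $a_1$'s two edges leaves a graph of $\Gamma_{k',t'}$ type with $t'\geq3$ plus one extra edge. I will argue this via the addendum of Theorem \ref{KT} applied after deleting a vertex, or via Theorem \ref{diameter}, mirroring the $\epsilon(a_1,b_1)$ step in Lemma \ref{lem21ad}. Finally, Theorem \ref{admissible} finishes the inductive step.

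\textbf{Main obstacle.} The main obstacle is the removal of $b_i$ for $2\leq i\leq v+1$ (and dually $a_i$). The resulting graph is not obviously in an earlier family, nor obviously of diameter three. I will first check whether it is a proper connected subgraph of a $\Gamma_{\ast,\ast}$ with the same vertex set once the isolated-into-$A$ vertex $a_i$ is regarded as part of the $A\cup C$ clique. It is: $a_i$ then has no $B$-neighbor, so the graph is $\Gamma_{v+t+2,\,v+1}$ with a matching of size $v+1$ minus one, plus $a_1$'s double edge. So I would aim to identify it as a proper connected spanning subgraph of $\Gamma_{v+t+2,\,v+1}$ (obtained by deleting the single matching edge at $a_i$) and invoke the addendum of Theorem \ref{KT}. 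If some vertex nonetheless resists admissibility, I would revert to showing it has no normal nonabelian Sylow subgroup via Lemma \ref{pi}, prove $F$ minimal normal as in Lemma \ref{lem21mLfit}, and conclude with Proposition \ref{final}.
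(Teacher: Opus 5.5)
The vertices $a_i$ with $2\leq i\leq v+1$ are where your proposal breaks down. Your overall plan is the paper's: show every vertex of $\Gamma=\SLi{v+1}{1}{(t+1)}$ admissible and invoke Theorem \ref{admissible}. Your treatment of the $c_j$, of $b_1,b_{v+2}$, and of the P\'alfy-protected edges agrees with the paper.

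For those $a_i$ you must show that $\Gamma[a_i]$ and $\Gamma[\epsilon(a_i,b_i)]$ do not occur. The latter is also needed for condition (ii) at $b_i$. Your claim that these are diameter-three graphs with ``$|\rho_3|$ too small'' fails as soon as $k=v+1\geq4$.

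In both graphs $a_1$ keeps its two $B$-neighbors $b_1,b_{v+2}$. So neither graph is a subgraph of any $\Gamma_{\ast,\ast}$, and Theorem \ref{KT} is unavailable. The only splitting into two cliques is $B$ versus the remaining vertices. Placing the latter in $\rho_1\cup\rho_2$ is ruled out by Theorem \ref{diameter}\eqref{diam2}--\eqref{diam3}. Hence $\rho_1\cup\rho_2=B$, $\rho_3=\{a_1,\ldots,a_{v+1}\}\setminus\{a_i\}$, and $\rho_4$ is $C$ (together with $a_i$ in the edge case).

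This gives $|\rho_3|=v\geq3$ when $k\geq4$. Also $|\rho_3\cup\rho_4|\geq v+t+1$, which exceeds $2^{v+2}$ for large $t$. So none of \eqref{diam1}--\eqref{diam3} is violated, and your argument only covers $k=3$.

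The paper gets past this with the structural part \eqref{diam4}. Assuming $\Delta(H)=\Gamma[a_2]$, it takes the unique normal Sylow $p_1$-subgroup $P_1$ with $p_1\in\rho_3$. It then passes to $H/P_1'$, whose vertex set is $\rho(H)\setminus\{p_1\}$. This is again a diameter-three graph, with $\rho_3$ consisting of the remaining $a$'s. The resulting normal Sylow $p_2$-subgroup lifts to a second normal Sylow subgroup of $H$ for a prime of $\rho_3$, contradicting uniqueness. $\Gamma[\epsilon(a_2,b_2)]$ is handled the same way.

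Your fallback does not avoid this. Its Frattini step (as in Lemma \ref{lem21mLcon}) again needs $\Gamma[\epsilon(a_i,b_i)]$ to be non-occurring.

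There are two smaller errors.

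First, for $\Gamma[a_1]$ your diameter-three plan ``as in Lemma \ref{lem21ad}'' has the same defect, since $|\rho_3|=v$. The paper's fix is immediate: $\Gamma[a_1]$ is $\Gamma_{v+t+1,v+2}$ with two matching edges deleted, so the addendum of Theorem \ref{KT} applies. Similarly, deleting one or both of $\epsilon(a_1,b_1),\epsilon(a_1,b_{v+2})$ gives a proper connected spanning subgraph of $\Gamma_{v+t+2,v+2}$, with no extra edge.

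Second, your resolution of $\Gamma[b_i]$ in the last paragraph is wrong. Since $a_1$ is still adjacent to both $b_1$ and $b_{v+2}$, the graph is not a subgraph of $\Gamma_{v+t+2,v+1}$. The right observation, which is the paper's, is that once $b_i$ is gone, $a_i$ has exactly the neighborhood of a $c$-vertex; it is not pendant. So $\Gamma[b_i]\cong\SLi{v}{1}{(t+2)}$, which Hypothesis \ref{hypk1mL}\eqref{hypk1i} excludes directly.
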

\begin{proof}
Suppose that we are under the conditions of Hypothesis \ref{hypk1mL}. To complete our induction argument on both $k$ and $m$, we need only show that the graph $\SLi{v+1}{1}{(t+1)}$ does not occur as the prime character degree graph of any solvable group. We will do this by showing that every vertex is admissible.

First we aim to show $a_1$ is admissible. We start by considering the graph determined by removing $a_1$ and all incident edges, which can be denoted by $\SLi{v+1}{1}{(t+1)}[a_1]$. Since $\SLi{v+1}{1}{(t+1)}[a_1]$ is a connected subgraph of $\Gamma_{v+t+1,v+2}$ with the same vertex set, then $\SLi{v+1}{1}{(t+1)}[a_1]$ does not occur by the addendum of Theorem \ref{KT}. Next we consider losing edges incident to $a_1$. Losing the edge $\epsilon(a_1,a_i)$ for any $2\leq i\leq v+1$ violates P\'alfy's condition with $a_1$, $a_i$, and $b_j$ for any $2\leq j\leq v+1$ with $i\neq j$. Therefore, none of those edges can be lost. Removing the edge $\epsilon(a_1,c_i)$ for any $1\leq i\leq t+1$ can again use P\'alfy's condition, this time with the trio $a_1$, $c_i$, and $b_2$, for example. Hence, none of those edges can be lost either. Finally, removing one or both of the edges $\epsilon(a_1,b_1)$ and $\epsilon(a_1,b_{v+2})$ yields a proper connected subgraph of $\Gamma_{v+t+2,v+2}$ with the same vertex set, and so again using Theorem \ref{KT}, we get nonoccurrence. Thus, $a_1$ is admissible.

We aim to show $a_2$ is admissible. First, we consider the resulting graph by removing $a_2$ and all incident edges, denoted by $\SLi{v+1}{1}{(t+1)}[a_2]$. For ease of notation, we will instead refer to this as $\Delta(H)$, where $H$ is some solvable subgroup of $G$. We partition the vertices of $\Delta(H)$ by way of Theorem \ref{diameter}, which forces $\rho_3 = \{a_1,a_3,\ldots,a_{v+1}\}$. By Theorem \ref{diameter}\eqref{diam4}, we have the existence of a normal Sylow $p_1$-subgroup for exactly one $p_1 \in \rho_3$. Without loss of generality, suppose $p_1 = a_1$; denote the associated normal subgroup as $P_1$. By \cite{L2.5}, we know that $\rho(H/P'_1) = \rho(H) \setminus \{p_1\}$. Observe that $\Delta(H/P'_1)$ is a connected subgraph of $\Delta(H)$ obtained by removing $a_1$ and all incident edges, and possibly edges between vertices adjacent to $a_1$. Since $\Delta(H/P'_1)$ is diameter three, we again partition its vertex set by Theorem \ref{diameter}, which yields its corresponding $\rho_3 = \{a_3,\ldots,a_{v+1}\}$. By Theorem \ref{diameter}\eqref{diam4}, we now have that $H/P'_1$ has a normal Sylow $p_2$-subgroup for exactly one prime $p_2 \in \rho_3$. Without loss of generality, suppose $p_2 = a_3$; denote the corresponding normal subgroup as $P_2$. It can then be shown that $P_1P_2 = P_1 \times P_2$ is normal in $H$. Thus, $P_2$ is a normal Sylow $p_2$-subgroup of $H$ where $p_2 \in \rho_3$. This implies that $P_1$ and $P_2$ are distinct normal Sylow $p$-subgroups of $H$ for more than one $p \in \rho_3$. This is a contradiction of Theorem \ref{diameter}\eqref{diam4}. Hence, again using the notation from above, we get that $\Delta(H)$ cannot occur. To finish the argument on the admissibility, we consider losing edges incident to $a_2$. Clearly, the edges $\epsilon(a_2,a_i)$ ($1 \leq i \leq v+1$ and $i\neq2$) and $\epsilon(a_2,c_j)$ ($1 \leq j\leq t+1$) cannot be lost by the same argument used in the case of $a_1$ above. Finally, concerning the edge between $a_2$ and $b_2$, we see that the resulting graph $\SLi{v+1}{1}{(t+1)}[\epsilon(a_2,b_2)]$ can be shown not to occur by an argument similar to that used for $\SLi{v+1}{1}{(t+1)}[a_2]$. Hence, $a_2$ is admissible. Consequently $a_i$ is admissible for all $3 \leq i \leq v+1$ by symmetry.

Next we aim to show that $b_1$ is admissible. Notice that by removing $b_1$ and all incident edges we are left with the graph $\Gamma_{v+t+2,v+1}$ which does not occur by Theorem \ref{KT}. Considering, now, removing edges incident to $b_1$, we see that the edge between $b_1$ and $b_i$ cannot be lost due to P\'alfy's condition with $b_1$, $b_i$, and $c_1$ for all $2\leq i\leq v+2$. Furthermore, the edge between $b_1$ and $a_1$ was considered above, and it cannot be lost. Hence, $b_1$ is admissible, and by symmetry, so is $b_{v+2}$.

For the vertex $b_2$, we start by considering $\SLi{v+1}{1}{(t+1)}[b_2]$, which is isomorphic to the graph $\SLi{v}{1}{(t+2)}$, which does not occur by Hypothesis \ref{hypk1mL}\eqref{hypk1i}. Next we consider removing edges incident to $b_2$. Clearly the edge $\epsilon(b_2,b_i)$ cannot be lost for any $1\leq i\leq v+2$ such that $i\neq2$ due to P\'alfy's condition with $b_2$, $b_i$, and $c_1$. Moreover, the edge between $b_2$ to $a_2$ was handled above, and cannot be lost. Hence, $b_2$ is admissible. By an identical argument, we also get that $b_i$ is admissible for all $3\leq i\leq v+1$.

Finally, we aim to show $c_1$ is admissible. First, the graph $\SLi{v+1}{1}{(t+1)}[c_1]$ is isomorphic to the graph $\SLi{v+1}{1}{t}$ which does not occur as the prime character degree graph of any solvable group by Hypothesis \ref{hypk1mL}\eqref{hypk1ii}. We now consider the edges incident to $c_1$. Losing the edge $\epsilon(c_1, c_i)$ for any $2 \leq i \leq t + 1$ violates P\'alfy's condition with $c_1$, $c_i$, and $b_1$. Also, as shown above, $\epsilon(a_i,c_1)$ cannot be lost for any $1\leq i\leq v+1$. Hence $c_1$ is admissible, and by symmetry, $c_i$ is admissible for all $2\leq i\leq t+1$.

Since every vertex is admissible, then $\SLi{v+1}{1}{(t+1)}$ does not occur by Theorem \ref{admissible}. Hence, by induction, $\SLi{k}{1}{m}$ does not occur as the prime character degree graph of any solvable group for all $k\geq3$ and $m\geq2$.
\end{proof}

Now that we have tamed the indices $k$ and $m$, we now turn our attention to the index $n$. In order to handle $n$, which ranges as $1\leq n\leq k$, we must yet again produce an induction argument, where we will see Proposition \ref{propk1mL} act as part of a base case.

Whenever $k\geq2$, $n\geq1$, and $m\geq1$, we now aim to do a triple induction to show that the graph $\SLi{k}{n}{m}$ does not occur as the prime character degree graph of any solvable group.

Our first stage, inducting on $k$, has as our base case the scenario of $k = 2$. The resulting graph $\SLi{2}{n}{m}$ does not occur for all $1\leq n \leq 2$ and for all $m \geq 1$. The case $n = 1$ was proven in Proposition \ref{prop21mL} and $n = 2$ was proven in Corollary \ref{cor22mL}. For our inductive hypothesis: given any integer $v\geq 2$, we assume that the graph $\SLi{v}{n}{m}$ does not occur for all $1\leq n \leq v$ and for all $m\geq 1$. For our inductive step in this stage, we will show that the graph $\SLi{v+1}{n}{m}$ does not occur for all $1 \leq n \leq v+1$ and for all $m \geq 1$. In order to show this, we next proceed by induction on $n$.

Notice that the base case of $n = 1$ requires the graph $\SLi{v+1}{1}{m}$ to not occur for all $m\geq 1$, which was proven in Proposition \ref{propk1mL}. For our inductive hypothesis, given any integer $1 \leq w < v+1$, we assume the graph $\SLi{v+1}{w}{m}$ does not occur for all $m \geq 1$. We will prove the inductive step, that the graph $\SLi{v+1}{w+1}{m}$ does not occur for all $m\geq 1$, by inducting on $m$. 

Here our final base case of $m = 1$ results in the graph $\SLi{v+1}{w+1}{1}$, which is classified as non-occurring by Theorem \ref{Lfam}. Our final inductive hypothesis is that given any integer $t\geq 1$ we assume the graph $\SLi{v+1}{w+1}{t}$ does not occur. We now consolidate our inductive hypotheses as:

\begin{hyp}\label{hypknmL}
Given any integers $v \geq 2$ and $1\leq w < v+1$ and $t \geq 1$, we assume the following:
\begin{enumerate}[(i)]
    \item\label{hypkni} $\SLi{v}{n}{m}$ does not occur for all $1 \leq n \leq v$ and for all $m\geq 1$,
    \item\label{hypknii} $\SLi{v+1}{w}{m}$ does not occur for all $m\geq 1$, and
    \item\label{hypkniii} $\SLi{v+1}{w+1}{t}$ does not occur.
\end{enumerate}
\end{hyp}

Our goal, therefore, in order to finish this triple inductive argument, will be to show that the graph $\SLi{v+1}{w+1}{(t+1)}$ does not occur as the prime character degree graph of any solvable group. We address this in the following result, and one can see examples of these graphs in Figure \ref{figknmL} below.

\begin{figure}[htb]
    \centering
$
\begin{tikzpicture}[scale=2]
\node (xc1) at (0,.75) {$c_1$};
\node (xc2) at (0,0.25) {$c_2$};
\node (xa1) at (0.5,1) {$a_1$};
\node (xa2) at (0.5,0) {$a_2$};
\node (xa3) at (1,0.5) {$a_3$};
\node (xb1) at (2,1) {$b_1$};
\node (xb4) at (2.5,.75) {$b_4$};
\node (xb2) at (2,0) {$b_2$};
\node (xb5) at (2.5,.25) {$b_5$};
\node (xb3) at (1.5,0.5) {$b_3$};
\path[font=\small,>=angle 90]
(xa1) edge node [right] {$ $} (xb1)
(xa1) edge node [right] {$ $} (xb4)
(xc1) edge node [right] {$ $} (xc2)
(xc1) edge node [right] {$ $} (xa1)
(xc1) edge node [right] {$ $} (xa2)
(xc1) edge node [right] {$ $} (xa3)
(xc2) edge node [right] {$ $} (xa1)
(xc2) edge node [right] {$ $} (xa2)
(xc2) edge node [right] {$ $} (xa3)
(xa3) edge node [right] {$ $} (xb3)
(xa1) edge node [right] {$ $} (xa3)
(xa2) edge node [right] {$ $} (xa3)
(xa1) edge node [right] {$ $} (xa2)
(xa2) edge node [right] {$ $} (xb2)
(xa2) edge node [right] {$ $} (xb5)
(xb1) edge node [right] {$ $} (xb4)
(xb1) edge node [right] {$ $} (xb2)
(xb4) edge node [right] {$ $} (xb2)
(xb4) edge node [right] {$ $} (xb5)
(xb1) edge node [right] {$ $} (xb5)
(xb2) edge node [right] {$ $} (xb5)
(xb1) edge node [right] {$ $} (xb3)
(xb4) edge node [right] {$ $} (xb3)
(xb2) edge node [right] {$ $} (xb3)
(xb5) edge node [right] {$ $} (xb3);
\node (yc1) at (3.25,0.75) {$c_1$};
\node (yc2) at (3.25,0.25) {$c_2$};
\node (ya1) at (4.25,.5) {$a_1$};
\node (ya2) at (3.75,1) {$a_2$};
\node (ya3) at (3.75,0) {$a_3$};
\node (yb1) at (4.75,0.75) {$b_1$};
\node (yb4) at (4.75,.25) {$b_4$};
\node (yb2) at (5.25,1) {$b_2$};
\node (yb5) at (5.75,.75) {$b_5$};
\node (yb3) at (5.25,0) {$b_3$};
\node (yb6) at (5.75,0.25) {$b_6$};
\path[font=\small,>=angle 90]
(ya1) edge node [right] {$ $} (yb1)
(ya1) edge node [right] {$ $} (yb4)
(yc1) edge node [right] {$ $} (yc2)
(yc1) edge node [right] {$ $} (ya1)
(yc1) edge node [right] {$ $} (ya2)
(yc1) edge node [right] {$ $} (ya3)
(yc2) edge node [right] {$ $} (ya1)
(yc2) edge node [right] {$ $} (ya2)
(yc2) edge node [right] {$ $} (ya3)
(ya3) edge node [right] {$ $} (yb3)
(ya1) edge node [right] {$ $} (ya3)
(ya2) edge node [right] {$ $} (ya3)
(ya1) edge node [right] {$ $} (ya2)
(ya2) edge node [right] {$ $} (yb2)
(ya2) edge node [right] {$ $} (yb5)
(yb1) edge node [right] {$ $} (yb4)
(yb1) edge node [right] {$ $} (yb2)
(yb4) edge node [right] {$ $} (yb2)
(yb4) edge node [right] {$ $} (yb5)
(yb1) edge node [right] {$ $} (yb5)
(yb2) edge node [right] {$ $} (yb5)
(yb1) edge node [right] {$ $} (yb3)
(yb4) edge node [right] {$ $} (yb3)
(yb2) edge node [right] {$ $} (yb3)
(yb3) edge node [right] {$ $} (yb5)
(ya3) edge node [right] {$ $} (yb6)
(yb1) edge node [right] {$ $} (yb6)
(yb2) edge node [right] {$ $} (yb6)
(yb3) edge node [right] {$ $} (yb6)
(yb4) edge node [right] {$ $} (yb6)
(yb5) edge node [right] {$ $} (yb6);
\end{tikzpicture}
$
    \caption{The graphs $\SLi{3}{2}{2}$ and $\SLi{3}{3}{2}$}
    \label{figknmL}
\end{figure}

\begin{corollary}\label{corknmL}
The graph $\SLi{k}{n}{m}$ does not occur as the prime character degree graph of any solvable group for all $k\geq3$ and $2\leq n\leq k$ and $m\geq2$.
\end{corollary}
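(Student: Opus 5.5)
Working under Hypothesis \ref{hypknmL}, it suffices to show that $\Gamma=\SLi{v+1}{w+1}{(t+1)}$ does not occur. Set $k=v+1\geq3$ and $n=w+1\geq2$. The plan is to show that every vertex of $\Gamma$ is admissible and then apply Theorem \ref{admissible}, just as in Corollary \ref{cor22mL} and Proposition \ref{propk1mL}. Two groups of checks are uniform. First, every edge inside the cliques $\{a_1,\ldots,a_k,c_1,\ldots,c_{t+1}\}$ and $\{b_1,\ldots,b_{k+n}\}$ is forced by P\'alfy's condition (Lemma \ref{PC}). For $\epsilon(a_i,a_j)$ we use the trio $a_i,a_j,b_\ell$, where $b_\ell$ is adjacent to neither $a_i$ nor $a_j$; such a $b_\ell$ exists since $k\geq3$. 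For $\epsilon(a_i,c_j)$ we use $a_i,c_j,b_\ell$ with $b_\ell$ not adjacent to $a_i$. For $\epsilon(c_i,c_j)$ and $\epsilon(b_i,b_j)$ we use $b_1$, respectively $c_1$, as the third vertex. Second, after deleting such edges, the only remaining question in condition (ii) concerns the cross edges between $A$ and $B$.

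The vertex deletions would go as follows.
\begin{enumerate}[(a)]
\item Deleting $c_1$ gives $\Gamma[c_1]\cong\SLi{v+1}{w+1}{t}$, which is non-occurring by Hypothesis \ref{hypknmL}\eqref{hypkniii}.
\item Deleting $b_{k+i}$ with $1\leq i\leq n$ makes $a_i$ have only one $B$-neighbour. After relabeling this is $\SLi{v+1}{w}{(t+1)}$, non-occurring by Hypothesis \ref{hypknmL}\eqref{hypknii}.
\item Deleting $b_i$ with $i\leq n$ is symmetric to deleting $b_{k+i}$.
\item Deleting $b_i$ with $n<i\leq k$ leaves $a_i$ with no $B$-neighbour, so $a_i$ joins the $C$-clique. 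The result is $\SLi{v}{w+1}{(t+2)}$ when $w+1\leq v$, non-occurring by Hypothesis \ref{hypknmL}\eqref{hypkni}. If $w+1=v+1$ this case does not arise, since then every $b_i$ falls under (b) or (c).
\item Deleting a vertex $a_i$ disconnects its $B$-neighbours from the rest only through other paths. What remains is a connected subgraph of some $\Gamma_{s,r}$ with $s\geq r\geq2$ and the same vertex set, or a graph of diameter three. In the first case Theorem \ref{KT} disposes of it. In the second I would use Theorem \ref{diameter}, arguing as for $a_2$ in Proposition \ref{propk1mL}: $\rho_3$ is the remaining $A$-vertices, and peeling two normal Sylow subgroups contradicts \eqref{diam4}.
\end{enumerate}

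For the cross edges, removing any nonempty set of edges $\epsilon(a_i,b_i)$ or $\epsilon(a_i,b_{k+i})$ while keeping the graph connected yields a proper connected subgraph of $\Gamma_{k+t+1,k+n}$ with the same vertex set. This is non-occurring by the addendum of Theorem \ref{KT}; the disconnected case is excluded since we only need to handle those occurring as $\Gamma$ minus edges at a vertex. Hence condition (ii) holds for every vertex.

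I expect the main obstacle to be step (e), deleting $a_i$. When $i\leq n$, two $B$-vertices lose a neighbour. The resulting graph must be recognized either as a subgraph of a $\Gamma_{s,r}$ (it is, with $A\cup C$ minus $a_i$ on one side and all of $B$ on the other, provided the remaining cross edges still form a partial matching of size $\geq2$), or as needing the diameter-three argument. Here I would first try Theorem \ref{KT}: since $k\geq3$, at least two disjoint cross edges survive. The graph then remains connected on the same vertex set and is a proper subgraph of $\Gamma_{k+t,k+n}$, so the addendum applies and avoids the delicate diameter computation. With all vertices admissible, Theorem \ref{admissible} finishes the inductive step, and hence the triple induction.
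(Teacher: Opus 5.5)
\textbf{Where the proposal breaks.} Your vertex deletions (a)--(d) and the P\'alfy checks are exactly what the paper does. The gap is that every appeal to the addendum of Theorem \ref{KT} is invalid once $n\geq 2$. That covers your preferred treatment of deleting $a_i$ in (e) and your whole treatment of condition (ii) for the cross edges.

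In each graph you want to feed to Theorem \ref{KT}, there is an untouched index $j\leq n$: take $j\leq n$ with $j\neq i$ if $i\leq n$, and $j=1$ otherwise. So $a_j$ is still adjacent to both $b_j$ and $b_{k+j}$, and $\{a_j,b_j,b_{k+j}\}$ is a triangle meeting both cliques.

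In $\Gamma_{s,r}$, every clique of size at least $3$ lies inside one of the two sides. This is because a vertex adjacent to both ends of a matching edge $a_\ell b_\ell$ must be $a_\ell$ or $b_\ell$. Now suppose your graph embedded into $\Gamma_{s,r}$.
\begin{enumerate}[(1)]
\item The triangle $\{a_j,b_j,b_{k+j}\}$ puts $a_j$ on the same side as the clique $B$.
\item The clique formed by the remaining $A\cup C$ vertices contains $a_j$, so it lands on that side too.
\item The other side is then empty.
\end{enumerate}
So none of these graphs is a subgraph of any $\Gamma_{s,r}$. That two disjoint cross edges survive is beside the point: the addendum needs the surviving cross edges to form a matching. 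That is why it worked for $a_1$ in Proposition \ref{propk1mL}, where $n=1$.

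\textbf{What is needed instead.} These graphs require the diameter-three argument. You mention it as a fallback in (e), but then commit to Theorem \ref{KT} there and use nothing else for the cross edges. After deleting $a_i$, or a cross edge $\epsilon(a_i,b)$, some vertex of $B$ has no neighbour left in $A$, so it lies at distance $3$ from every $c_j$.

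Moreover, parts (i)--(iii) of Theorem \ref{diameter} alone do not settle these graphs. Take $\Gamma[a_i]$. The only partition of the vertices into two cliques is $B$ and $(A\setminus\{a_i\})\cup C$. The orientation with $B=\rho_3\cup\rho_4$ would need $k+n\geq 2^{k+t}$, which is impossible. Hence $\rho_1\cup\rho_2=B$, $\rho_3=A\setminus\{a_i\}$ and $\rho_4=C$. Part (iii) then only asks for $k+t\geq 2^{k+n}$, which holds for large $t$, and $|\rho_3|=k-1\geq 3$ once $k\geq4$.

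So you genuinely need the part (iv) argument that Proposition \ref{propk1mL} uses for $a_2$ and for $\epsilon(a_2,b_2)$. It must now be applied to every $a_i$ and to every cross-edge removal. This is the ``delicate diameter computation'' you hoped to avoid, and it is what the paper's proof appeals to when it says, without details, that the arguments of Proposition \ref{propk1mL} carry over. With that substitution, your outline coincides with the paper's.
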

\begin{proof}
Suppose that we are under the conditions of Hypothesis \ref{hypknmL}.
To complete our triple induction argument on $k$, $n$, and $m$, we will show that the graph $\SLi{v+1}{w+1}{(t+1)}$ does not occur.
To that end, we will proceed by showing every vertex is admissible.

First, for any vertex $a_i$ ($1\leq i\leq v+1$), we can employ the same methods as seen in previous results. Technically, the vertices $a_1,\ldots,a_{w+1}$ and the vertices $a_{w+2},\ldots,a_{v+1}$ behave slightly differently based on their edges, but the argument seen in Proposition \ref{propk1mL} works in the same way regardless. We omit those arguments here, but using them, one can conclude that $a_i$ is admissible for all $1\leq i\leq v+1$. 

In a similar fashion as above, the vertices in the sets $\beta=\{b_1,\ldots,b_{w+1},b_{v+2},\ldots,b_{v+w+2}\}$ and $\overline{\beta}=\{b_{w+2},\ldots,b_{v+1}\}$ also behave differently. For our purposes, it is then sufficient to consider $b_1$ and $b_{w+2}$.

Losing the vertex $b_1$ results in a graph isomorphic to $\SLi{v+1}{w}{(t+1)}$ which does not occur by Hypothesis \ref{hypknmL}\eqref{hypknii}. Again, as seen in earlier arguments, one can also show that no combination of edges incident to $b_1$ can be removed. Therefore, $b_1$ is admissible, and by symmetry we get that every vertex in $\beta$ is too.

Next, we consider the vertex $b_{w+2}$ (supposing that it exists, as long as $w+1<v+1$). The loss of $b_{w+2}$ results in the graph $\SLi{v}{w+1}{(t+2)}$ which does not occur by Hypothesis \ref{hypknmL}\eqref{hypkni}. The edges incident to $b_{w+2}$ follow identically to $b_1$. Hence, $b_{w+2}$ is admissible. Thus, by symmetry, we have that every vertex in $\overline{\beta}$ is admissible.

Obviously, the loss of $c_1$ and all incident edges yields a graph isomorphic to $\SLi{v+1}{w+1}{t}$ which is supposed not to occur by Hypothesis \ref{hypknmL}\eqref{hypkniii}.
The edges from $c_1$ to $a_i$ ($1 \leq i \leq v+1$) were dealt with above.
The graph induced by loss of the edge $\epsilon(c_1,c_i)$ for any $2\leq i \leq t+1$ does not occur by P\'alfy's condition given $c_1$, $c_i$, and $b_1$.
Hence, $c_1$ is admissible, and, by symmetry, $c_i$ is admissible for $2\leq i\leq t+1$. 

Since every vertex is admissible, we have that the graph $\SLi{v+1}{w+1}{(t+1)}$ does not occur by Theorem \ref{admissible}. Thus by induction, the graph $\SLi{k}{n}{m}$ does not occur as the prime character degree graph of any solvable group for all $k\geq3$ and $2\leq n\leq k$ and $m\geq2$.
\end{proof}

We can now state our result concerning the family $\{\SLi{k}{n}{m}\}$ in full generality.

\begin{thmA}
The graph $\SLi{k}{n}{m}$ occurs as the prime character degree graph of a solvable group when $(k,n,m)\in\{(1,1,i)~|~1\leq i\leq35\}$, and possibly when $(k,n,m)\in\{(1,1,i)~|~i>35\}$. Otherwise, $\SLi{k}{n}{m}$ does not occur as the prime character degree graph of any solvable group.
\end{thmA}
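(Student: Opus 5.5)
Theorem A is an assembly of the results already proved in this section, so the plan is a case split on $(k,n,m)$ that sends each triple to one earlier result. Recall $m\geq1$ throughout this section, since $m=0$ is left to Section \ref{seczero}. Also $1\leq n\leq k$, so $k=1$ forces $n=1$.

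\emph{The case $k=1$.} Here the graph is $\SLi{1}{1}{m}$. For $m=1$ we have $\SLi{1}{1}{1}=\SL{1}{1}$, which occurs by Theorem \ref{Lfam}. For $2\leq m\leq35$ it occurs by Proposition \ref{prop11mL}. For $m>35$ we make no claim either way; the construction of Proposition \ref{prop11mL} would still work whenever some $2^{p^aq^b}-1$ factors into powers of exactly $m$ distinct primes avoiding $p$ and $q$. This accounts for the phrase ``possibly when $i>35$'' in the statement, and that part needs no proof.

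\emph{The case $k\geq2$.} Here every graph should be non-occurring, and we split on $m$ and $(k,n)$:
\begin{enumerate}[(i)]
\item If $m=1$, Theorem \ref{Lfam} gives nonoccurrence, since $(k,n)\neq(1,1)$.
\item If $k=2$, $n=1$, $m\geq2$, use Proposition \ref{prop21mL}.
\item If $k=2$, $n=2$, $m\geq2$, use Corollary \ref{cor22mL}.
\item If $k\geq3$, $n=1$, $m\geq2$, use Proposition \ref{propk1mL}.
\item If $k\geq3$, $2\leq n\leq k$, $m\geq2$, use Corollary \ref{corknmL}.
\end{enumerate}
These five cases exhaust all triples with $k\geq2$, $1\leq n\leq k$ and $m\geq1$.

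The only real obstacle is bookkeeping: the induction hypotheses feeding Proposition \ref{propk1mL} and Corollary \ref{corknmL} must be fully discharged. Their base cases use $m=1$ (Theorem \ref{Lfam}) and $k=2$ (Proposition \ref{prop21mL}, Corollary \ref{cor22mL}). The inner induction steps also invoke graphs with larger $m$, such as $\SLi{v}{1}{(t+2)}$; these are covered because Hypothesis \ref{hypk1mL}\eqref{hypk1i} and Hypothesis \ref{hypknmL}\eqref{hypkni} assume nonoccurrence for \emph{all} $m\geq1$ at the smaller $k$. I would check explicitly that this nested induction on $k$, then $n$, then $m$ is well-founded. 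Once that is confirmed, the theorem follows by combining the five cases above with the $k=1$ case.
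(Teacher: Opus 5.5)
Your proposal is correct and follows the paper's proof, which likewise just combines Theorem \ref{Lfam} for $m=1$ with Propositions \ref{prop11mL}, \ref{prop21mL}, \ref{propk1mL} and Corollaries \ref{cor22mL}, \ref{corknmL} for $m\geq2$. Your explicit case split on $(k,n)$, and your reading of the statement as restricted to $m\geq1$, are exactly what the paper leaves implicit; that restriction is necessary, since $\SZ{2}{1}$ and $\SZ{2}{2}$ occur.
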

\begin{proof}
This unites the results from Theorem \ref{Lfam}, which handles $m=1$, along with the results from Propositions \ref{prop11mL}, \ref{prop21mL}, and \ref{propk1mL}, as well as Corollaries \ref{cor22mL} and \ref{corknmL}, which handle $m\geq2$.
\end{proof}

\section{The Right Family}\label{secright}

We next turn our attention to the family of graphs $\{\SRi{k}{n}{m}\}$. This collection of graphs has had extensive work done. In \cite{DLM}, the proof of the main result ($m=1$) required first classifying the graphs with $m\geq2$. We recall both of these below.

\begin{theorem}\label{Rfam}\emph{(\cite{DLM})}
The graph $\SR{k}{n}$ occurs as the prime character degree graph of a solvable group when $(k,n)=(1,1)$, and possibly when $(k,n)\in\{(2,1), (2,2)\}$. Otherwise, $\SR{k}{n}$ does not occur as the prime character degree graph of any solvable group.
\end{theorem}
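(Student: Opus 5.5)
The proof splits into a construction for $(k,n)=(1,1)$ and a nonoccurrence argument for every $(k,n)$ outside $\{(1,1),(2,1),(2,2)\}$. The pairs $(2,1)$ and $(2,2)$ are left open, which is why they appear only as ``possibly'' occurring.

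\emph{Occurrence for $(k,n)=(1,1)$.} The graph $\SR{1}{1}$ has vertices $a_1,b_1,b_2,c_1$. Here $b_1,b_2$ are adjacent to each other and to everything else, while $a_1$ and $c_1$ are not adjacent. So it is the join of a $K_2$ on $\{b_1,b_2\}$ with the disconnected graph on the two isolated vertices $\{a_1,c_1\}$. Both pieces are known to occur (Zhang's classification in \cite{Z}). For instance, the two-vertex $K_2$ component can be taken from a field construction as in Proposition \ref{prop11mL}, and the disconnected two-vertex graph from a group with $\cd=\{1,p,q\}$. Choosing the primes disjoint and taking the direct product realizes $\SR{1}{1}$, exactly as in the proof of Proposition \ref{prop11mL}.

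\emph{Nonoccurrence.} Following the remark that \cite{DLM} first classified the graphs $\SRi{k}{n}{m}$ with $m\geq2$, I would run the same nested induction on $k$, then $n$, then $m$ used for the left family in Propositions \ref{prop21mL} and \ref{propk1mL} and Corollaries \ref{cor22mL} and \ref{corknmL}. At each step I would check admissibility vertex by vertex.
\begin{enumerate}[(i)]
\item Deleting a $c$-vertex lowers $m$, which the induction handles.
\item Deleting a $b$-vertex lowers $n$, or lowers $k$ while raising $n$ or $m$, again handled by the induction.
\item Deleting an $a$-vertex typically leaves a graph of diameter three, or a connected subgraph of some $\Gamma_{k',t}$. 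I would kill these with Theorem \ref{diameter} (as in the $a_2$ argument of Proposition \ref{propk1mL}) or with the addendum of Theorem \ref{KT}.
\item Edge deletions would be ruled out by P\'alfy's condition (Lemma \ref{PC}) using a triple with one vertex from the opposite side, or by its odd-cycle generalization from \cite{A}.
\end{enumerate}
Once every vertex is admissible, Theorem \ref{admissible} finishes each inductive step.

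\emph{Main obstacle.} The hard part is the small cases with $k=2$ and the base cases $k=3$, $n\in\{1,2,3\}$. There not every vertex is visibly admissible, because vertex deletions land on the undecided graphs $\SR{2}{1}$ and $\SR{2}{2}$. For these I would adopt the minimal-counterexample scheme of Lemmas \ref{lem21ad}--\ref{lem21mLfit}:
\begin{enumerate}[(i)]
\item Show most vertices are strongly admissible, and handle the remaining vertex with Lemma \ref{pi}, so that $G$ has no normal nonabelian Sylow subgroups.
\item Reduce to $\Phi(G)=1$ and show that $F$ is minimal normal, by a case analysis on which complete subgraph contains $\rho(G)\setminus[\rho(G/N)\cap\rho(G/M)]$.
\item Conclude with Proposition \ref{final}.
\end{enumerate}
Exactly this case analysis fails to close for $k=2$, and that failure is why $(2,1)$ and $(2,2)$ remain open.
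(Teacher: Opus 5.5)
\paragraph{Review of the proposal.}
The paper does not reprove this theorem. It quotes it from \cite{DLM} and remarks that the $m=1$ case there ``required first classifying the graphs with $m\geq2$'' (Proposition~\ref{propknmR}).

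Your treatment of $(1,1)$ is fine: $\SR{1}{1}$ is the join of $K_2$ with two nonadjacent vertices, so a direct product works. One correction: the field group of Proposition~\ref{prop11mL} has graph $K_2\cup K_m$, not $K_2$. Use its quotient by the field, or any group with $\Delta=K_2$ on primes disjoint from the other factor.

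\paragraph{The gap in the nonoccurrence part.}
Your items (ii)--(iv) describe the \emph{left} family, not the right one. In $\SRi{k}{n}{m}$ the $c$-vertices hang off $B$, so deleting an $a$-vertex turns its $B$-neighbours into new $c$-vertices:
\[
\SRi{k}{n}{m}[a_j]\cong\SRi{k-1}{n}{(m+1)}\quad(n<j\leq k),\qquad
\SRi{k}{n}{m}[a_j]\cong\SRi{k-1}{n-1}{(m+2)}\quad(j\leq n).
\]
The second graph is $\Gamma_{k+m+1,k-1}$ when $n=1$. Otherwise these are diameter-two right-family graphs with larger $m$, not the diameter-three or $\Gamma_{k',t}$-type graphs you anticipate.

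Two further items are also misdescribed:
\begin{itemize}
\item Deleting $b_j$ with $n<j\leq k$ leaves $a_j$ at distance three from $C$. The result lies in neither family, so it is not ``handled by the induction.''
\item The matching edges cannot be removed using P\'alfy's condition or \cite{A} at all, because the complement of $\SRi{k}{n}{m}$ is bipartite. Deleting $\epsilon(a_i,b_i)$ with $i\leq n$ gives $\SRi{k}{n-1}{(m+1)}$.
\end{itemize}
Consequently, the admissibility checks for $\SR{k}{n}$ require right-family graphs with $m\in\{2,3\}$, at level $k-1$ and even at level $k$.

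You quote the remark that \cite{DLM} did $m\geq2$ first, but your inner induction on $m$ runs upward. That reverses the dependence and leaves the induction without a base:
\begin{itemize}
\item $m=1$ is the statement being proved, and nothing plays the role that Theorem~\ref{Lfam} plays for the left family.
\item $m=0$ does not help either. In this paper the zero family $\SZ{k}{n}$ ($k\geq3$, apart from $\SZ{3}{1}$ from \cite{LMS}) is itself derived from Theorem~\ref{Rfam} and Proposition~\ref{propknmR}; see Lemmas~\ref{lem32ad} and~\ref{lemk10} and Proposition~\ref{propkn0}.
\end{itemize}

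\paragraph{The base case $k=3$.}
At $k=3$ the $a$-deletions of $\SR{3}{n}$ do not land on $\SR{2}{1}$ or $\SR{2}{2}$. They land on $\SRi{2}{1}{2}$, $\SRi{2}{1}{3}$, $\SRi{2}{2}{2}$ and $\SRi{2}{2}{3}$, apart from $\Gamma_{5,2}$ for $a_1$ when $n=1$. All four are undecided by Theorem~B. So the $a$-vertices of $\SRi{3}{n}{m}$ are not known to be admissible for any $m$.

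Each $a$-deletion also adds one or two $c$-vertices. So passing to larger $k$ needs $\SRi{3}{n}{m}$ for unboundedly many $m$, not just the three graphs $\SR{3}{n}$. Your minimal-counterexample scheme would therefore have to run uniformly in $m$ over the whole family $\{\SRi{3}{n}{m}\}$. That means establishing, for every $m$:
\begin{itemize}
\item strong admissibility of the $b$'s and $c$'s;
\item Lemma~\ref{pi} for the $a$'s;
\item $\Phi(G)=1$;
\item minimality of $F$ via the subgraph case analysis;
\item the final contradiction from Proposition~\ref{final}.
\end{itemize}
This also has to be ordered consistently with $\SZ{3}{n}$, since Lemma~\ref{lem32ad} itself uses $\SR{3}{1}$. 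None of this is supplied.

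As written, the proposal is a template transplanted from the left family. The missing ingredient is exactly the $m\geq2$ classification, together with an $m$-uniform treatment of $k=3$.
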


\begin{proposition}\label{propknmR}\emph{(\cite{DLM})}
The graph $\SRi{k}{n}{m}$ does not occur as the prime character degree graph of any solvable group for all $k\geq3$ and $1\leq n\leq k$ and $m\geq2$.
\end{proposition}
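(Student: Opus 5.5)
We prove Proposition \ref{propknmR} by the same nested induction used for the left family in Corollary \ref{corknmL}. At each stage we show that every vertex of $\SRi{k}{n}{m}$ is admissible and then apply Theorem \ref{admissible}.

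\textbf{Induction scheme.} We induct on $k\geq3$, then on $n$, then on $m\geq2$.
\begin{enumerate}[(i)]
    \item For $m=1$ we use Theorem \ref{Rfam}: $\SR{k}{n}$ does not occur for $k\geq3$.
    \item For the case $k=3$ we need auxiliary nonoccurrence facts. In particular, $\SRi{2}{n}{m}$ with $m$ large must be handled, or deleting a $b$ vertex must be arranged to stay within $k\geq3$.
    \item The inductive hypotheses mirror Hypothesis \ref{hypknmL}:
    \begin{itemize}
        \item $\SRi{v}{n}{m}$ does not occur for all $n,m$;
        \item $\SRi{v+1}{w}{m}$ does not occur for all $m$;
        \item $\SRi{v+1}{w+1}{t}$ does not occur.
    \end{itemize}
    The goal is then the nonoccurrence of $\SRi{v+1}{w+1}{(t+1)}$.
\end{enumerate}

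\textbf{Deleting a $c$ vertex.} Removing $c_1$ yields $\SRi{v+1}{w+1}{t}$, which does not occur by the third hypothesis. Each edge $\epsilon(c_1,c_j)$ and $\epsilon(c_1,b_i)$ cannot be lost, by P\'alfy's condition applied to the triple $c_1$, $c_j$ (respectively $b_i$), together with an $a$ vertex not adjacent to either.

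\textbf{Deleting a $b$ vertex.} For a vertex $b_i$ that is paired (either $i>k$, or $i\leq n$ together with its partner $b_{k+i}$), removal gives $\SRi{v+1}{w}{(t+1)}$. For an unpaired $b_i$ (with $n<i\leq k$), removal gives a graph isomorphic to $\SRi{v}{w+1}{(t+1)}$ after relabeling; here $a_i$ merges into the complete $A$ side with no partner. Edges inside $B\cup C$ are protected by P\'alfy's condition with a suitable $a_j$. The $a$-to-$b$ matching edges are handled as below.

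\textbf{Deleting an $a$ vertex.} Removing $a_i$ leaves $A\setminus\{a_i\}$ attached to the large complete graph $B\cup C$ through a matching. This is a connected subgraph of some $\Gamma_{x,y}$ with $x\geq y\geq2$, so it does not occur by the addendum of Theorem \ref{KT}. The same theorem handles losing any subset of the matching edges: the result is a proper connected subgraph of $\Gamma_{k+n+m,k}$ with the same vertex set. Edges inside $A$ are protected by P\'alfy's condition with $a_i$, $a_j$ and some $b$ adjacent to neither.

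\textbf{Main obstacle.} I expect the hardest step to be the deletion of an $a$ vertex when $k$ is small. There the ``$a$ side'' of $\Gamma_{x,y}$ may shrink to a single vertex, the case $t=1$ of Theorem \ref{KT}, which occurs. It must also be checked that removing all matching edges at $a_i$ disconnects the graph. If that degenerate case arises, I would instead use a diameter-three argument via Theorem \ref{diameter}, as in Proposition \ref{propk1mL}: show that $\rho_3$ is too small, or that two normal Sylow subgroups are forced. Since $k\geq3$, every $a_i$ still has at least two other $a$ vertices, which should keep us in the $y\geq2$ regime.
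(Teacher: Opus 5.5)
The paper gives no proof of this proposition; it quotes it from \cite{DLM}. Judged on its own terms, your plan has genuine gaps. Two of the graph identifications it rests on are wrong, and repairing them pushes the difficulty into exactly the base case $k=3$ that you leave open.

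\textbf{Deleting an $a$ vertex.} Removing $a_i$ does not leave $A\setminus\{a_i\}$ attached to $B\cup C$ by a matching. Every $a_j$ with $j\le n$, $j\neq i$, keeps both neighbours $b_j$ and $b_{k+j}$. In $\Gamma_{x,y}$, by contrast, every vertex has at most one neighbour outside its own clique. So the result is a subgraph of no $\Gamma_{x,y}$, except when $n=1$ and $i=1$, which gives $\Gamma_{k+m+1,k-1}$.

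The freed vertices $b_i$ (and $b_{k+i}$) simply become $c$-vertices. Thus $\SRi{k}{n}{m}[a_i]$ is $\SRi{k-1}{n-1}{(m+2)}$ for $i\le n$, and $\SRi{k-1}{n}{(m+1)}$ for $i>n$; compare $\SZ{k}{w+1}[a_1]=\SRi{k-1}{w}{2}$ in Proposition \ref{propkn0}. So the $a$-vertices need the induction on $k$. For $k\ge4$ that is a repair. At $k=3$, however, they land on $\SRi{2}{1}{\ell}$ and $\SRi{2}{2}{\ell}$, which Theorem B leaves open. The worry in your last paragraph, that the $a$-side shrinks to one vertex, is not the real issue.

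Your claim about deleting subsets of $A$--$B$ edges fails for the same reason:
\begin{itemize}
\item Dropping one of the two edges at $a_i$ with $i\le n$ gives $\SRi{k}{n-1}{(m+1)}$. This is fine by induction on $n$, but not by Theorem \ref{KT}.
\item Dropping $\epsilon(a_j,b_j)$ with $j>n$ leaves $a_j$ with no neighbour in $B\cup C$ while $a_1$ still has two. That graph lies in neither family.
\end{itemize}

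\textbf{Deleting an unpaired $b$ vertex.} Removing $b_i$ with $n<i\le k$ cannot give $\SRi{v}{w+1}{(t+1)}$, since the vertex counts are $2v+w+t+3$ and $2v+w+t+2$. The orphaned $a_i$ stays in the $A$-clique, while the unattached vertices of the right family live in the $B$-clique. The relabeling of Proposition \ref{propk1mL} and Corollary \ref{corknmL} works only because in the left family $a_i$ joins the $c$'s.

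What you actually get has diameter three, since $d(a_i,c_1)=3$. Its cliques have sizes $k$ and $k+n+m-1$, and $|\rho_3|=k+n-1\ge3$. Hence Theorem \ref{diameter} yields no contradiction once $k+n+m-1\ge 2^k$, and a new argument is required.

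\textbf{The base case.} Taking Theorem \ref{Rfam} as your $m=1$ base reverses the logical order recorded in the paper: in \cite{DLM} the $m=1$ case was proved only after the $m\ge2$ classification. At best the two must be interleaved in a single induction on $k$. The base $k=3$ of that induction is exactly what you defer to unspecified ``auxiliary facts'' about graphs that are in fact unclassified.

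At $k=3$ the every-vertex-admissible strategy is not available. The model is the paper's treatment of $\SZ{3}{2}$ (Lemmas \ref{lem32ad}--\ref{lem32min} and Proposition \ref{prop320}). There, each $\SZ{3}{2}[a_i]$ is one of the open graphs $\SRi{2}{1}{2}$ or $\SR{2}{2}$, and for precisely that reason the $a_i$ are handled by Lemma \ref{pi} rather than by admissibility. The argument then proceeds through $\Phi(G)=1$ and minimality of $F$, and finishes with Proposition \ref{final}.
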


We then notice that the graph $\SRi{k}{n}{m}$ is fully classified except for $k=1$ and $k=2$. For the former, one can see Figure \ref{fig11mR} for examples of graphs, which relate to Proposition \ref{prop11mR}.

\begin{figure}[htb]
    \centering
$
\begin{tikzpicture}[scale=2]
\node (xa1) at (0,0.5) {$a_1$};
\node (xb1) at (0.5,1) {$b_1$};
\node (xb2) at (0.5,0) {$b_2$};
\node (xc1) at (1,.8) {$c_1$};
\node (xc2) at (1,.2) {$c_2$};
\path[font=\small,>=angle 90]
(xa1) edge node [right] {$ $} (xb1)
(xa1) edge node [right] {$ $} (xb2)
(xb1) edge node [right] {$ $} (xb2)
(xb1) edge node [right] {$ $} (xc1)
(xb2) edge node [right] {$ $} (xc2)
(xc1) edge node [right] {$ $} (xc2)
(xb1) edge node [right] {$ $} (xc2)
(xb2) edge node [right] {$ $} (xc1);
\node (ya1) at (1.75,0.5) {$a_1$};
\node (yb1) at (2.25,0.8) {$b_1$};
\node (yb2) at (2.25,0.2) {$b_2$};
\node (yc1) at (2.75,1) {$c_1$};
\node (yc3) at (3.25,0.5) {$c_3$};
\node (yc2) at (2.75,0) {$c_2$};
\path[font=\small,>=angle 90]
(ya1) edge node [right] {$ $} (yb1)
(ya1) edge node [right] {$ $} (yb2)
(yb1) edge node [right] {$ $} (yc1)
(yb1) edge node [right] {$ $} (yb2)
(yb2) edge node [right] {$ $} (yc2)
(ya1) edge node [right] {$ $} (yb1)
(yc1) edge node [right] {$ $} (yc3)
(yc2) edge node [right] {$ $} (yc3)
(yb1) edge node [right] {$ $} (yc3)
(yb2) edge node [right] {$ $} (yc3)
(yc1) edge node [right] {$ $} (yc2)
(yc1) edge node [right] {$ $} (yb2)
(yc2) edge node [right] {$ $} (yb1);
\node (za1) at (4,0.5) {$a_1$};
\node (zb1) at (4.5,0.8) {$b_1$};
\node (zb2) at (4.5,0.2) {$b_2$};
\node (zc1) at (5,1) {$c_1$};
\node (zc2) at (5,0) {$c_2$};
\node (zc3) at (5.5,0.8) {$c_3$};
\node (zc4) at (5.5,0.2) {$c_4$};
\path[font=\small,>=angle 90]
(za1) edge node [right] {$ $} (zb1)
(za1) edge node [right] {$ $} (zb2)
(zb1) edge node [right] {$ $} (zc1)
(zb1) edge node [right] {$ $} (zb2)
(zb2) edge node [right] {$ $} (zc2)
(za1) edge node [right] {$ $} (zb1)
(zc1) edge node [right] {$ $} (zc3)
(zc2) edge node [right] {$ $} (zc3)
(zb1) edge node [right] {$ $} (zc3)
(zb2) edge node [right] {$ $} (zc3)
(zc1) edge node [right] {$ $} (zc2)
(zc1) edge node [right] {$ $} (zb2)
(zc2) edge node [right] {$ $} (zb1)
(zc4) edge node [right] {$ $} (zb2)
(zc4) edge node [right] {$ $} (zb1)
(zc4) edge node [right] {$ $} (zc1)
(zc4) edge node [right] {$ $} (zc2)
(zc4) edge node [right] {$ $} (zc3);
\end{tikzpicture}
$
    \caption{The graphs $\SRi{1}{1}{2}$, $\SRi{1}{1}{3}$, and $\SRi{1}{1}{4}$}
    \label{fig11mR}
\end{figure}

\begin{proposition}\label{prop11mR}
The graph $\SRi{1}{1}{m}$ occurs as the prime character degree graph of some solvable group for all $m\geq2$.
\end{proposition}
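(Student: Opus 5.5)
The plan is to exploit the structure of the graph rather than run a nonoccurrence argument. In $\SRi{1}{1}{m}$ the vertices $b_1$ and $b_2$ are adjacent to each other, to $a_1$, and to every $c_j$. The only missing edges are those between $a_1$ and the $c_j$. So $\SRi{1}{1}{m}$ is the join of $K_2$ (on $\{b_1,b_2\}$) with the disconnected graph $K_1\cup K_m$ (on $\{a_1\}$ and $\{c_1,\ldots,c_m\}$). We use the same device as in Proposition~\ref{prop11mL}. If $G_1$ and $G_2$ have $\rho(G_1)\cap\rho(G_2)=\varnothing$, then $\cd(G_1\times G_2)=\{\alpha\beta~|~\alpha\in\cd(G_1),\beta\in\cd(G_2)\}$. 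Hence $\Delta(G_1\times G_2)$ is the union of $\Delta(G_1)$ and $\Delta(G_2)$ together with every edge between $\rho(G_1)$ and $\rho(G_2)$. It therefore suffices to produce, for every $m\geq2$, a solvable group $H_m$ with $\Delta(H_m)=K_1\cup K_m$, and a solvable group $K$ with $\Delta(K)=K_2$ whose two primes avoid $\rho(H_m)$. The required group is then $H_m\times K$.

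For $K$, take any solvable group whose degree set is $\{1,p,q,pq\}$ with $p,q$ fresh primes. A direct product of two nonabelian groups of orders $p^3$ and $q^3$ works, or a single group with $\cd=\{1,pq\}$. Since $\rho(H_m)$ is finite, we can always choose $p$ and $q$ outside it, so this step is harmless.

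The main step, and the real obstacle, is $H_m$ for \emph{every} $m\geq2$. The field-of-order-$2^{N}$ construction of Proposition~\ref{prop11mL} controls the number of primes in $2^N-1$ only case by case, which is exactly why that result stops at $35$. So we should not rely on factoring $r^p-1$ exactly. Instead, we produce a solvable group $H=V\rtimes(C\rtimes P)$ with the following properties:
\begin{itemize}
\item $V$ is an elementary abelian $r$-group;
\item $C$ is cyclic of order $n=s_1\cdots s_m$, where the $s_i$ are $m$ distinct primes;
\item $P$ is cyclic of prime order $p$;
\item $\cd(H)=\{1,p,n\}$.
\end{itemize}
The most natural source for such a group is Lewis's classification of solvable groups with disconnected graphs \cite{L}. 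His examples (of the type of Example 2.4 there) realize $K_1\cup K_m$, and as I recall this holds for every $m$, because the Pálfy inequality from \cite{P2} imposes no obstruction when the smaller component is a single vertex. I would first check that this family indeed covers all $m$ and cite it.

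If a self-contained argument is wanted, I would try the following construction. Choose distinct primes $s_1,\ldots,s_m$ and a prime $p\nmid n$. By Dirichlet's theorem, choose a prime $r$ whose multiplicative order modulo each $s_i$ is divisible by $p$ in a suitable way. Take $C\leq\mathbb{F}_{r^p}^\times$ of order $n$, and let $P$ be generated by the Frobenius automorphism. The aim is that $C$ acts fixed-point-freely on $V=\mathbb{F}_{r^p}$, so that nonprincipal characters of $V$ lie in $C$-orbits of length $n$. Their inertia groups would be $V$ times a subgroup of order $1$ or $p$ of $C\rtimes P$, so the degrees lying over them are $n$ or $np$. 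We must rule out $np$, or else keep $p\nmid$ all of these degrees and obtain $p$ only from $\cd(CP)$; making sure $p$ is not joined to the $s_i$ is the delicate point. Once $H_m$ is in hand with $\Delta(H_m)=K_1\cup K_m$, the identity $\Delta(H_m\times K)=\SRi{1}{1}{m}$ follows immediately from the direct product formula above.
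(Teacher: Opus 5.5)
\textbf{Gap: the existence of $H_m$ is never established.} Your reduction is sound and is essentially the paper's. The vertices $b_1,b_2$ are adjacent to every other vertex, and the direct-product rule for groups with disjoint vertex sets is correct. The paper peels off one such vertex at a time: it writes $\SRi{1}{1}{m}=\Delta(G_1\times G_m)$ with $\Delta(G_1)=K_1$ (the vertex $b_1$) and $\Delta(G_m)=\Gamma_{m+1,1}$, which is $\SRi{1}{1}{m}$ with $b_1$ deleted. That graph occurs by Theorem~\ref{KT}, since $t=1$.

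What you leave unproved is the one step with real content: a solvable $H_m$ with $\Delta(H_m)=K_1\cup K_m$ for \emph{every} $m\geq2$. You rest it on an unchecked recollection of \cite{L}. The remark that P\'alfy's inequality poses no obstruction shows only that nothing rules such a group out.

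\textbf{Your fallback construction fails as set up.} Identify $\Irr(V)$ with $V=\mathbb{F}_{r^p}$ via the trace form. Since $C$ acts fixed-point-freely, the stabilizer in $CP$ of the character attached to $v\neq0$ is nontrivial exactly when some $c\sigma$ fixes it, that is, when $v^{1-r}\in C$. For every orbit to have length $n$ rather than $np$, $C$ must therefore contain the norm-one subgroup, i.e.\ $\frac{r^p-1}{r-1}\mid n$. That brings back exactly the uncontrolled factorization of $r^p-1$ you wanted to avoid. Otherwise some orbit is regular, the degree $np$ occurs, and $p$ is joined to every $s_i$. You then get $K_{m+1}$, not $K_1\cup K_m$.

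\textbf{How to close the gap.} Either take $\Gamma_{m+1,1}$ from Theorem~\ref{KT} as the paper does, or build $H_m$ directly:
\begin{enumerate}[(i)]
\item Pick distinct primes $s_1,\ldots,s_m$, set $n=s_1\cdots s_m$, and by Dirichlet choose a prime $p\equiv-1\pmod n$. Then $p\geq n-1\geq5$ is odd and $p\notin\{s_i\}$.
\item Let $P$ be extraspecial of order $p^3$ and exponent $p$. Let $C$ be the subgroup of order $n$ in a nonsplit torus of order $p+1$ of $SL_2(p)=Sp_2(p)$, acting on $P$ trivially on $Z(P)$ and fixed-point-freely on $P/Z(P)$.
\item By Brauer's permutation lemma, each nontrivial linear character of $P$ has inertia group $P$, so it induces irreducibly to degree $n$.
\item The characters of degree $p$ are $C$-invariant and extend, because $(P\rtimes C)/P$ is cyclic. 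By Gallagher they contribute only the degree $p$.
\end{enumerate}
Hence $\cd(P\rtimes C)=\{1,p,n\}$ and $\Delta(P\rtimes C)=K_1\cup K_m$. With this $H_m$ and your $K$ on two fresh primes, your argument goes through.
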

\begin{proof}
Let $m\in\mathbb{N}$ such that $m\geq2$. First, we recall that the singleton $K_1$ occurs as the prime character degree graph of some solvable group $G_1$, and so $\Delta(G_1)=K_1$. Next, notice that the graph $\Gamma_{m+1,1}$ occurs as the prime character degree graph of some solvable group $G_m$ by way of Theorem \ref{KT}. Thus, $\Delta(G_m)=\Gamma_{m+1,1}$. Therefore, considering the direct product $G_1\times G_m$, one observes that $\Delta(G_1\times G_m)=\SRi{1}{1}{m}$, whence the result.
\end{proof}

We note that the case of $k=2$ remains an issue. We are, as of yet, unable to determine if the graphs $\SRi{2}{n}{m}$ do or do not occur as the prime character degree graph of a solvable group. These graphs have been studied extensively, first appearing in \cite{BLL} as one of the nine unclassified graphs with six vertices, which in our notation is the graph $\SRi{2}{1}{1}=\SR{2}{1}$. These graphs with $k=2$ also show up in \cite{LMS} (with their label of $B_7=\SRi{2}{1}{2}$ and $B_{11}=\SRi{2}{2}{1}=\SR{2}{2}$) and in \cite{LS2} (with their label of $B_8=\SRi{2}{1}{3}$ and $B_{16}=\SRi{2}{2}{2}$), all of which went unclassified. Combining all relevant results, we are then left with the following theorem:

\begin{thmB}
The graph $\SRi{k}{n}{m}$ occurs as the prime character degree graph of a solvable group when $(k,n,m)\in\{(1,1,i)~|~i\in\mathbb{N}\}$, and possibly when $(k,n,m)\in\{(2,1,i), (2,2,i)~|~i\in\mathbb{N}\}$. Otherwise, $\SRi{k}{n}{m}$ does not occur as the prime character degree graph of any solvable group.
\end{thmB}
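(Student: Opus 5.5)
Theorem B is a consolidation statement, so the plan is to assemble it from the results already recorded in this section, splitting on $k$ and, within each value of $k$, on whether $m=1$ or $m\geq2$. The parameter ranges $k\geq1$, $1\leq n\leq k$, $m\geq1$ cover the whole family. (The case $m=0$ is deferred to Theorem C, since Theorem B is phrased with $i\in\mathbb{N}$.)

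\emph{The case $m=1$.} Here $\SRi{k}{n}{1}=\SR{k}{n}$, and Theorem \ref{Rfam} applies directly:
\begin{enumerate}[(i)]
    \item $(1,1,1)$ occurs;
    \item $(2,1,1)$ and $(2,2,1)$ are undetermined;
    \item all other triples with $m=1$ do not occur.
\end{enumerate}

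\emph{The case $m\geq2$.}
\begin{enumerate}[(i)]
    \item For $k=1$, necessarily $n=1$, and Proposition \ref{prop11mR} shows that $\SRi{1}{1}{m}$ occurs for every $m\geq2$. This is realized by $G_1\times G_m$, where $\Delta(G_1)=K_1$ and $\Delta(G_m)=\Gamma_{m+1,1}$, the latter occurring by Theorem \ref{KT}. Combined with $m=1$, this gives occurrence for all $(1,1,i)$ with $i\in\mathbb{N}$.
    \item For $k\geq3$, Proposition \ref{propknmR}, quoted from \cite{DLM}, shows nonoccurrence for all $1\leq n\leq k$. Together with the nonoccurrence from Theorem \ref{Rfam} when $k\geq3$ and $m=1$, this yields nonoccurrence for every $k\geq3$.
    \item For $k=2$, with $n\in\{1,2\}$ and any $m$, nothing decides occurrence. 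The known instances $B_7$, $B_8$, $B_{11}$, $B_{16}$ from \cite{LMS} and \cite{LS2}, and the six-vertex graph from \cite{BLL}, all remain unclassified. These triples therefore fall into the ``possibly'' clause of the statement.
\end{enumerate}

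The only real care needed is bookkeeping. First, the cases must be exhaustive: $k=1$ forces $n=1$, and $k=2$ allows exactly $n\in\{1,2\}$. Second, the ``otherwise'' clause must contain exactly the triples with $k\geq3$. No new group-theoretic argument is required, so I expect no genuine obstacle.

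For $k=2$ I would not try to settle occurrence. The obvious attempt would be the admissibility induction of Proposition \ref{prop21mL}. It should fail because the vertex $a_1$ (resp. the $A$-side) is not forced to be admissible: $\SRi{2}{n}{m}[a_i]$ need not be non-occurring. That failure is consistent with the ``possibly'' wording of Theorem B.
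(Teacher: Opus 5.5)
Your proposal is correct and follows essentially the same approach as the paper. Theorem B is just the assembly of Theorem \ref{Rfam} for $m=1$, Proposition \ref{prop11mR} for $k=1$ with $m\geq2$, and Proposition \ref{propknmR} for $k\geq3$ with $m\geq2$, with $k=2$ left in the ``possibly'' clause; your closing remark on why admissibility stalls at $k=2$ is consistent (indeed $\SRi{2}{1}{m}[a_1]\cong\Gamma_{m+3,1}$ occurs), but it is not needed for the proof.
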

\begin{proof}
One combines the results from Theorem \ref{Rfam} ($m=1$), as well  as Proposition \ref{propknmR} ($k\geq3$ and $m\geq2$) and Proposition \ref{prop11mR} ($k=1$ and $m\geq2$).
\end{proof}

\section{The Zero Family}\label{seczero}

It is also worthwhile to look at the family of graphs when we stipulate the condition of $m=0$. In other words, we take zero ``special" vertices. Notationally, it is clear that since there are no vertices $c_i$ in this scenario, then the $L$ or $R$ superscript loses meaning. Therefore, we adopt the notational convention of
$$
\SZ{k}{n}:=\SLi{k}{n}{0}=\SRi{k}{n}{0}.
$$
Hence, the goal of this section is to classify this family $\{\SZ{k}{n}\}$. In order to do so, we consider three separate cases for $k$: either $1\leq k\leq2$, or $k=3$, or $k\geq4$.

\subsection{The cases of $k=1$ and $k=2$}

For these possibilities of $k=1$ and $k=2$, notice that due to the restriction of $1\leq n\leq k$, we have only three possible graphs that could arise: $\SZ{1}{1}$, $\SZ{2}{1}$, and $\SZ{2}{2}$. One can see Figure \ref{fig1and20} for these graphs.

\begin{figure}[htb]
    \centering
$
\begin{tikzpicture}[scale=2]
\node (xa1) at (0,0.5) {$a_1$};
\node (xb1) at (0.5,1) {$b_1$};
\node (xb2) at (0.5,0) {$b_2$};
\path[font=\small,>=angle 90]
(xb1) edge node [right] {$ $} (xb2)
(xa1) edge node [right] {$ $} (xb1)
(xa1) edge node [right] {$ $} (xb2);
\node (ya1) at (1.25,1) {$a_1$};
\node (ya2) at (1.25,0) {$a_2$};
\node (yb1) at (2.25,1) {$b_1$};
\node (yb2) at (2.25,0) {$b_2$};
\node (yb3) at (1.75,.5) {$b_3$};
\path[font=\small,>=angle 90]
(ya1) edge node [right] {$ $} (ya2)
(yb1) edge node [right] {$ $} (yb2)
(yb1) edge node [right] {$ $} (yb3)
(yb2) edge node [right] {$ $} (yb3)
(ya1) edge node [right] {$ $} (yb1)
(ya2) edge node [right] {$ $} (yb2)
(ya1) edge node [right] {$ $} (yb3);
\node (za1) at (3,1) {$a_1$};
\node (za2) at (3,0) {$a_2$};
\node (zb1) at (4,1) {$b_1$};
\node (zb2) at (4,0) {$b_2$};
\node (zb3) at (3.5,.8) {$b_3$};
\node (zb4) at (3.5,.2) {$b_4$};
\path[font=\small,>=angle 90]
(za1) edge node [right] {$ $} (za2)
(zb1) edge node [right] {$ $} (zb2)
(zb1) edge node [right] {$ $} (zb3)
(zb1) edge node [right] {$ $} (zb4)
(zb2) edge node [right] {$ $} (zb3)
(zb2) edge node [right] {$ $} (zb4)
(zb3) edge node [right] {$ $} (zb4)
(za1) edge node [right] {$ $} (zb1)
(za2) edge node [right] {$ $} (zb2)
(za1) edge node [right] {$ $} (zb3)
(za2) edge node [right] {$ $} (zb4);
\end{tikzpicture}
$
    \caption{The graphs $\SZ{1}{1}$, $\SZ{2}{1}$, and $\SZ{2}{2}$}
    \label{fig1and20}
\end{figure}

All of the aforementioned graphs have been studied and classified before, which we recall below.

\begin{proposition}\label{prop1and20}\emph{(\cite{Z},\cite{L3},\cite{BLL})}
The graphs $\SZ{1}{1}$, $\SZ{2}{1}$, and $\SZ{2}{2}$ occur as the prime character degree graph of some solvable group.
\end{proposition}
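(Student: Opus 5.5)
The plan is to identify each of the three graphs with a graph already known to occur, or to build a group directly from groups whose graphs are known.

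\emph{The graph $\SZ{1}{1}$.} This is the triangle $K_3$ on $\{a_1,b_1,b_2\}$. Complete graphs occur: for example, take a direct product of three nonabelian groups whose degree sets are powers of three distinct primes. Since $\Delta(G_1\times G_2)$ is the join of $\Delta(G_1)$ and $\Delta(G_2)$ when the prime sets are disjoint, this product has graph $K_3$. This case also appears in Zhang's classification \cite{Z}.

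\emph{The graph $\SZ{2}{1}$.} This graph has five vertices: the edge $a_1a_2$, the triangle $b_1b_2b_3$, and the edges $a_1b_1$, $a_1b_3$, $a_2b_2$. I would look it up in Lewis's catalogue of five-vertex graphs \cite{L3}, where it should appear among the occurring graphs. A direct construction is harder, because the graph is not a join: $a_2$ is not adjacent to $b_1$. So I would rely on Lewis's construction, which is based on semidirect products of a field by a cyclic group with Galois action, in the style of the proof of Proposition \ref{prop11mL}. As a sanity check, the graph has diameter two: $a_2$ reaches $b_1$ through $b_2$, and $a_2$ reaches $b_3$ through $b_2$. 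It also satisfies P\'alfy's condition, since every triple of vertices contains an edge.

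\emph{The graph $\SZ{2}{2}$.} This is the six-vertex graph classified in \cite{BLL}, and I would cite it in the same way.

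\emph{Main obstacle.} The main difficulty is only to match labels: I need to confirm that the graphs in \cite{L3} and \cite{BLL} really are isomorphic to $\SZ{2}{1}$ and $\SZ{2}{2}$. I would do this by checking degree sequences and adjacency. For $\SZ{2}{1}$ the degrees are $a_1{:}3$, $a_2{:}2$, $b_1{:}3$, $b_2{:}3$, $b_3{:}3$. For $\SZ{2}{2}$ the graph is $4$-regular on the $b$'s minus two edges: $b_1,\dots,b_4$ each have degree $4$, and $a_1$, $a_2$ each have degree $3$.
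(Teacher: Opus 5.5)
Your proposal is correct and is essentially the paper's proof. The paper also argues purely by citation: $\SZ{1}{1}=K_3$ via \cite{Z}, $\SZ{2}{1}$ via \cite{L3}, and $\SZ{2}{2}$ via \cite{BLL}.

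One side remark is false, however: $\SZ{2}{1}$ \emph{is} a join. A graph is a join exactly when its complement is disconnected. The complement of $\SZ{2}{1}$ has only the edges $a_1b_2$, $a_2b_1$, $a_2b_3$. Hence $\SZ{2}{1}$ is the join of the edgeless graph on $\{a_1,b_2\}$ with $K_1\cup K_2$ on $\{a_2\}\cup\{b_1,b_3\}$. A single non-adjacency such as $a_2\not\sim b_1$ does not prevent this.

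So a direct product realizes it. $\Delta(S_4)$ is the two isolated vertices $2,3$. The group $H=\mathbb{F}_{2^{11}}\rtimes\mathbb{F}_{2^{11}}^\times\rtimes\mathrm{Gal}(\mathbb{F}_{2^{11}}/\mathbb{F}_2)$ has $\cd(H)=\{1,11,2047\}$ with $2047=23\cdot 89$, so $\Delta(H)=K_1\cup K_2$. Then $\Delta(S_4\times H)\cong\SZ{2}{1}$ via $a_1\mapsto 2$, $b_2\mapsto 3$, $a_2\mapsto 11$, $\{b_1,b_3\}\mapsto\{23,89\}$.

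Likewise $\SZ{2}{2}$ is the join of two copies of $K_1\cup K_2$, on $\{a_1,b_2,b_4\}$ and $\{a_2,b_1,b_3\}$. This matches the direct-product construction the paper attributes to \cite{BLL}. Your guess about the form of Lewis's construction is therefore unnecessary. The label-matching ``obstacle'' also disappears, since this gives a self-contained proof.
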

\begin{proof}
Observe that the graph $\SZ{1}{1}=K_3$ and therefore has already been shown to occur as the prime character degree graph of some solvable group. In fact, one can see \cite{Z} for this classification, among others. Moreover, $\SZ{2}{1}$ is a graph with five vertices and so was classified in \cite{L3}. In that paper, Lewis classifies it as an occurring graph. Finally, the graph $\SZ{2}{2}$ was classified in \cite{BLL}, being a graph with six vertices. This graph was also shown to occur as the prime character degree graph of a solvable group, and the authors demonstrate this by employing the method of direct products.
\end{proof}

\subsection{The case of $k=3$}

We treat the case of $k=3$ separately for two main reasons. First, one of the graphs has already been classified, thus leaving only two graphs that need to be investigated (see Figure \ref{fig3n0} for the three total graphs). Second, it turns out that the argument needed to classify these graphs varies drastically from how we will work with the graphs within the case of $k\geq4$.

\begin{figure}[htb]
    \centering
$
\begin{tikzpicture}[scale=2]
\node (xa1) at (0,0.5) {$a_1$};
\node (xa2) at (-0.5,1) {$a_2$};
\node (xa3) at (-0.5,0) {$a_3$};
\node (xb1) at (0.5,0.75) {$b_1$};
\node (xb2) at (1,1) {$b_2$};
\node (xb3) at (1,0) {$b_3$};
\node (xb4) at (0.5,0.25) {$b_4$};
\path[font=\small,>=angle 90]
(xa1) edge node [right] {$ $} (xa2)
(xa1) edge node [right] {$ $} (xa3)
(xa2) edge node [right] {$ $} (xa3)
(xa1) edge node [right] {$ $} (xb1)
(xa1) edge node [right] {$ $} (xb4)
(xa2) edge node [right] {$ $} (xb2)
(xa3) edge node [right] {$ $} (xb3)
(xb1) edge node [right] {$ $} (xb2)
(xb1) edge node [right] {$ $} (xb3)
(xb1) edge node [right] {$ $} (xb4)
(xb2) edge node [right] {$ $} (xb3)
(xb2) edge node [right] {$ $} (xb4)
(xb3) edge node [right] {$ $} (xb4);
\node (ya3) at (2.25,0.5) {$a_3$};
\node (ya1) at (1.75,1) {$a_1$};
\node (ya2) at (1.75,0) {$a_2$};
\node (yb4) at (3.75,0.75) {$b_4$};
\node (yb1) at (3.25,1) {$b_1$};
\node (yb2) at (3.25,0) {$b_2$};
\node (yb5) at (3.75,0.25) {$b_5$};
\node (yb3) at (2.75,0.5) {$b_3$};
\path[font=\small,>=angle 90]
(ya1) edge node [right] {$ $} (ya2)
(ya1) edge node [right] {$ $} (ya3)
(ya2) edge node [right] {$ $} (ya3)
(ya1) edge node [right] {$ $} (yb1)
(ya1) edge node [right] {$ $} (yb4)
(ya2) edge node [right] {$ $} (yb2)
(ya2) edge node [right] {$ $} (yb5)
(ya3) edge node [right] {$ $} (yb3)
(yb1) edge node [right] {$ $} (yb2)
(yb1) edge node [right] {$ $} (yb3)
(yb1) edge node [right] {$ $} (yb4)
(yb1) edge node [right] {$ $} (yb5)
(yb2) edge node [right] {$ $} (yb3)
(yb2) edge node [right] {$ $} (yb4)
(yb2) edge node [right] {$ $} (yb5)
(yb3) edge node [right] {$ $} (yb4)
(yb3) edge node [right] {$ $} (yb5)
(yb4) edge node [right] {$ $} (yb5);
\node (za1) at (5.00,0.5) {$a_1$};
\node (za2) at (4.50,1) {$a_2$};
\node (za3) at (4.50,0) {$a_3$};
\node (zb1) at (5.50,0.75) {$b_1$};
\node (zb2) at (6.00,1) {$b_2$};
\node (zb3) at (6.00,0) {$b_3$};
\node (zb4) at (5.50,0.25) {$b_4$};
\node (zb5) at (6.50,0.75) {$b_5$};
\node (zb6) at (6.50,0.25) {$b_6$};
\path[font=\small,>=angle 90]
(za1) edge node [right] {$ $} (za2)
(za1) edge node [right] {$ $} (za3)
(za2) edge node [right] {$ $} (za3)
(za1) edge node [right] {$ $} (zb1)
(za1) edge node [right] {$ $} (zb4)
(za2) edge node [right] {$ $} (zb2)
(za2) edge node [right] {$ $} (zb5)
(za3) edge node [right] {$ $} (zb3)
(za3) edge node [right] {$ $} (zb6)
(zb1) edge node [right] {$ $} (zb2)
(zb1) edge node [right] {$ $} (zb3)
(zb1) edge node [right] {$ $} (zb4)
(zb1) edge node [right] {$ $} (zb5)
(zb1) edge node [right] {$ $} (zb6)
(zb2) edge node [right] {$ $} (zb3)
(zb2) edge node [right] {$ $} (zb4)
(zb2) edge node [right] {$ $} (zb5)
(zb2) edge node [right] {$ $} (zb6)
(zb3) edge node [right] {$ $} (zb4)
(zb3) edge node [right] {$ $} (zb5)
(zb3) edge node [right] {$ $} (zb6)
(zb4) edge node [right] {$ $} (zb5)
(zb4) edge node [right] {$ $} (zb6)
(zb5) edge node [right] {$ $} (zb6) ;
\end{tikzpicture}
$
    \caption{The graphs $\SZ{3}{1}$, $\SZ{3}{2}$, and $\SZ{3}{3}$}
    \label{fig3n0}
\end{figure}

\begin{proposition}\label{prop310}\emph{(\cite{LMS})}
The graph $\SZ{3}{1}$ does not occur as the prime character degree graph of any solvable group.
\end{proposition}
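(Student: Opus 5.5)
The plan is to follow the by-now standard template of this paper: suppose $\SZ{3}{1}=\Delta(G)$ for a solvable group $G$ of minimal order, show that every vertex is (strongly) admissible or at least carries no normal nonabelian Sylow subgroup, reduce to $\Phi(G)=1$ with $F$ minimal normal, and then derive a contradiction from Proposition \ref{final}. If the graph-theoretic reductions go through cleanly, I would simply try to verify that every vertex is admissible and conclude with Theorem \ref{admissible}.

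\textbf{Admissibility of the $a$-vertices.} The graph has vertices $a_1,a_2,a_3$ (a triangle), $b_1,b_2,b_3,b_4$ (a $K_4$), and the cross edges $\epsilon(a_1,b_1)$, $\epsilon(a_1,b_4)$, $\epsilon(a_2,b_2)$, $\epsilon(a_3,b_3)$.
\begin{itemize}
\item \emph{Removing $a_1$.} $\SZ{3}{1}[a_1]$ is $\Gamma_{4,2}$ with two pendant-type $b$-vertices attached to the $K_4$. Equivalently, it is a connected subgraph of $\Gamma_{4,2}$ on $\{a_2,a_3\}\cup\{b_1,\dots,b_4\}$, so it does not occur by the addendum of Theorem \ref{KT}.
\item \emph{Removing $a_2$ (symmetrically $a_3$).} This leaves a diameter three graph with $\rho_3\subseteq\{a_1,a_3\}$. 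It is eliminated by Theorem \ref{diameter}\eqref{diam1}, since $|\rho_3|<3$.
\item \emph{Edges within $A$.} Losing an edge $\epsilon(a_i,a_j)$ violates P\'alfy's condition with the third vertex chosen as a suitable $b_\ell$ adjacent to neither endpoint.
\item \emph{Cross edges.} Losing cross edges produces either a proper connected subgraph of $\Gamma_{4,3}$ or a diameter three graph, handled by Theorem \ref{KT} or Theorem \ref{diameter}. When a cross edge loss yields a disconnected graph, the two components have sizes $3$ and $4$. This requires the disconnected-graph analysis: by P\'alfy's inequality from \cite{P2}, $4\ge 2^3$ fails, so such graphs do not occur.
\end{itemize}

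\textbf{Admissibility of the $b$-vertices.}
\begin{itemize}
\item \emph{Removing $b_2$ or $b_3$.} This yields $\Gamma$-type graphs isomorphic to $\SL{2}{1}$-like configurations. More precisely, $\SZ{3}{1}[b_2]$ is $K_3$ joined to $K_3$ with three cross edges, one vertex having two of them. I would handle it through Theorem \ref{KT} or the diameter criterion.
\item \emph{Removing $b_1$ (or $b_4$).} This yields $\Gamma_{3,3}$, which does not occur by Theorem \ref{KT}.
\item \emph{Edges within $B$.} These are protected by P\'alfy's condition, using an $a$-vertex adjacent to neither endpoint.
\end{itemize}

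\textbf{Main obstacle.} I expect the hard part to be the vertex $b_2$ (and $b_3$). The deleted graph $\SZ{3}{1}[b_2]$ is a six-vertex graph that, to my knowledge, is not directly covered by Theorem \ref{KT}; it has diameter three only if distances work out, which I doubt. If admissibility of $b_2$ fails, I would fall back to Lemma \ref{pi} with $q=b_2$, $\pi_1=\{a_2\}$, $\pi_2=\{b_1,b_3,b_4\}$, $v=b_1$, $s=a_1$, and $w=a_3$, to show there is no normal nonabelian Sylow $b_2$-subgroup. I would then run the Fitting-subgroup argument of Lemma \ref{lem21mLfit}, showing $F$ is minimal normal, and finish with Proposition \ref{final} using $a=a_2$, $c=a_1$, $b=b_3$, $d=b_1$. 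Since this statement is cited from \cite{LMS}, the cleanest route may be to quote that classification of six- and seven-vertex graphs directly; the above is how I would reconstruct it.
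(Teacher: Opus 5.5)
\textbf{The reconstruction has a genuine gap at $a_2$ and $a_3$, not at $b_2$.} The paper gives no independent argument: it cites \cite{LMS}, where $\SZ{3}{1}$ is the graph $C_{20}$, so your closing suggestion to quote that classification is the paper's route. The claim that $\SZ{3}{1}[a_2]$ has diameter three is false. After deleting $a_2$, the vertex $a_1$ reaches every $b_j$ within two steps through $b_1$, the vertex $a_3$ does so through $b_3$, and $a_1a_3$ is an edge, so the diameter is two. In fact $\SZ{3}{1}[a_2]\cong\SR{2}{1}$: take $\{a_1,a_3\}$ as $A$, $\{b_1,b_4,b_3\}$ as $B$ (so $a_1$ is joined to $b_1,b_4$ and $a_3$ to $b_3$), and $b_2$ as $c_1$. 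That is the unclassified six-vertex graph of \cite{BLL}, which Theorem \ref{Rfam} lists only as possibly occurring. Hence condition (i) of Definition \ref{adstrong} cannot be verified for $a_2$ or $a_3$, and Theorem \ref{admissible} is unavailable. This is exactly why the paper separates $k=3$ from $k\ge4$: there, deleting $a_2$ from $\SZ{k}{1}$ gives $\SR{k-1}{1}$ with $k-1\ge3$, which is known not to occur. By contrast, $b_2$ is harmless. We have $\SZ{3}{1}[b_2]\cong\SL{2}{1}$ (with $a_2$ as $c_1$), which does not occur by Theorem \ref{Lfam}, and $b_2$ is in fact strongly admissible. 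So your fallback applies Lemma \ref{pi} to the wrong vertex and never excludes normal nonabelian Sylow $a_2$- and $a_3$-subgroups.

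\textbf{Repair.} Follow the template of Lemmas \ref{lem32ad}--\ref{lem32min}.
\begin{enumerate}[(1)]
\item Show that $a_1$ and every $b_i$ are strongly admissible. Besides P\'alfy's condition, this uses $\Gamma_{4,2}$, $\Gamma_{3,3}$ and its proper connected subgraphs with the same vertex set, $\Gamma_{4,3}$, $\SL{2}{1}$, and Theorem \ref{diameter}. Then Lemma \ref{strong} rules out normal nonabelian Sylow subgroups for those primes.
\item Apply Lemma \ref{pi} to $q=a_2$ with $\pi_1=\{b_2\}$, $\pi_2=\{a_1,a_3\}$, $v=a_1$, $s=b_1$, $w=b_3$. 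Symmetrically, take $q=a_3$ with $\pi_1=\{b_3\}$, $\pi_2=\{a_1,a_2\}$, $v=a_1$, $s=b_1$, $w=b_2$.
\item Prove the analogue of Lemma \ref{lem32sub} (no proper subgraph with the same vertex set occurs) to get $\Phi(G)=1$.
\item Carry out an explicit case analysis showing $F$ is minimal normal. In it, any case ending in a central Sylow $a_2$- or $a_3$-subgroup of a quotient cannot be closed by Lemma \ref{bigoh}. It must instead be killed by non-occurrence of the complementary quotient's graph.
\item Finish with Proposition \ref{final}. Your choice $a=a_2$, $b=b_3$, $c=a_1$, $d=b_1$ does satisfy its hypotheses.
\end{enumerate}
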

\begin{proof}
As demonstrated in \cite{LMS}, the graph $C_{20}=\SZ{3}{1}$ does not occur as the prime character degree graph of any solvable group.
\end{proof}

We now turn our attention to the graph $\SZ{3}{2}$, which is a graph with eight vertices and therefore makes an appearance in \cite{LS2}. One can see Figure \ref{fig320} for this graph. In order to classify $\SZ{3}{2}$, we will follow an argument similar to how $\SZ{3}{1}$ was handled in \cite{LMS}, which actually mimics the series of lemmas used to prove Proposition \ref{prop21mL}.

\begin{figure}[htb]
    \centering
$
\begin{tikzpicture}[scale=2]
\node (ya3) at (2.25,0.5) {$a_3$};
\node (ya1) at (1.75,1) {$a_1$};
\node (ya2) at (1.75,0) {$a_2$};
\node (yb4) at (3.75,0.75) {$b_4$};
\node (yb1) at (3.25,1) {$b_1$};
\node (yb2) at (3.25,0) {$b_2$};
\node (yb5) at (3.75,0.25) {$b_5$};
\node (yb3) at (2.75,0.5) {$b_3$};
\path[font=\small,>=angle 90]
(ya1) edge node [right] {$ $} (ya2)
(ya1) edge node [right] {$ $} (ya3)
(ya2) edge node [right] {$ $} (ya3)
(ya1) edge node [right] {$ $} (yb1)
(ya1) edge node [right] {$ $} (yb4)
(ya2) edge node [right] {$ $} (yb2)
(ya2) edge node [right] {$ $} (yb5)
(ya3) edge node [right] {$ $} (yb3)
(yb1) edge node [right] {$ $} (yb2)
(yb1) edge node [right] {$ $} (yb3)
(yb1) edge node [right] {$ $} (yb4)
(yb1) edge node [right] {$ $} (yb5)
(yb2) edge node [right] {$ $} (yb3)
(yb2) edge node [right] {$ $} (yb4)
(yb2) edge node [right] {$ $} (yb5)
(yb3) edge node [right] {$ $} (yb4)
(yb3) edge node [right] {$ $} (yb5)
(yb4) edge node [right] {$ $} (yb5);
\end{tikzpicture}
$
    \caption{The graph $\SZ{3}{2}$}
    \label{fig320}
\end{figure}

\begin{lemma}\label{lem32ad}
Suppose that $\SZ{3}{2}=\Delta(G)$ for some finite solvable group $G$, where $|G|$ is minimal. Then $G$ does not have any normal nonabelian Sylow $b_i$-subgroup for all $1\leq i\leq5$. In particular, $b_i$ is a strongly admissible vertex.
\end{lemma}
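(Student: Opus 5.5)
The plan is to show directly that each $b_i$ satisfies all three conditions of Definition \ref{adstrong}, and then invoke Lemma \ref{strong}. Since $|G|$ is minimal, for every proper normal subgroup $N$ the graph $\Delta(G/N)$ is a proper subgraph of $\Delta(G)$. The graph has two symmetries: one swaps $a_1$ with $a_2$ (together with $b_1,b_4$ and $b_2,b_5$), and one swaps $b_1$ with $b_4$ (and $b_2$ with $b_5$). So $b_1,b_2,b_4,b_5$ form a single orbit, and it suffices to treat $b_1$ and $b_3$.

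For condition (i), note that $\SZ{3}{2}[b_1]$ is a complete $A=\{a_1,a_2,a_3\}$ and a complete $\{b_2,b_3,b_4,b_5\}$ with edges $a_2b_2,a_2b_5,a_1b_4,a_3b_3$. Exactly one $a$-vertex has two $B$-neighbours, so this graph is isomorphic to $\SZ{3}{1}$, which does not occur by Proposition \ref{prop310}. The graph $\SZ{3}{2}[b_3]$ is a seven-vertex graph in which $a_3$ is adjacent only to $a_1$ and $a_2$. Here I would either cite its nonoccurrence from the seven-vertex classification in \cite{LMS}, or argue as for $a_2$ in Proposition \ref{propk1mL}. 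That is, use the normal-Sylow analysis of \cite{L2.5} and Theorem \ref{diameter} on the quotients, or a Pálfy-type inequality, to get a contradiction.

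For condition (ii), losing an edge $\epsilon(b_i,b_j)$ violates P\'alfy's condition (Lemma \ref{PC}) with $b_i$, $b_j$ and some $a_l$ adjacent to neither. Such an $a_l$ always exists because each $a_l$ has at most two $B$-neighbours; for instance $a_3$ works unless $b_3\in\{b_i,b_j\}$, in which case $a_1$ or $a_2$ works. It remains to handle losing $\epsilon(a_1,b_1)$ for $b_1$, and $\epsilon(a_3,b_3)$ for $b_3$. If $\epsilon(a_3,b_3)$ is lost, $a_3$ has no $B$-neighbour and the graph again has the shape of $\SZ{3}{2}[b_3]$ with an extra vertex. If $\epsilon(a_1,b_1)$ is lost, we get a graph with one $a$-vertex having two $B$-neighbours and two $a$-vertices having one, on five $B$-vertices. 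I would treat both by the same argument as for (i), comparing with the connected subgraphs of $C_{20}=\SZ{3}{1}$ eliminated in \cite{LMS}, or by a diameter-three argument via Theorem \ref{diameter} where it applies.

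For condition (iii), the neighbours of $b_1$ are $a_1,b_2,b_3,b_4,b_5$. Any $B$–$B$ edge among them is protected by P\'alfy with a suitable $a_l$, exactly as above. The only other edge is $\epsilon(a_1,b_4)$, which gives a connected proper subgraph of $\SZ{3}{1}$ on the same vertex set; here I would use the fact from \cite{LMS} that such subgraphs of $C_{20}$ do not occur. For $b_3$, the only new edge among its neighbours is $\epsilon(a_3,\cdot)$-free, so only $B$–$B$ edges arise and P\'alfy suffices. The main obstacle I expect is establishing nonoccurrence of $\SZ{3}{2}[b_3]$ and of the edge-deleted graphs, since neither P\'alfy nor the generalized P\'alfy condition applies there (the complements are bipartite). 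If no citation is available, this will require the more delicate group-theoretic argument.
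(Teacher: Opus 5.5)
Your overall plan matches the paper's: reduce by symmetry to $b_1$ and $b_3$, verify the three conditions of Definition \ref{adstrong}, and protect every $B$--$B$ edge by P\'alfy's condition with a suitable $a_l$. You also correctly use $\SZ{3}{2}[b_1]\cong\SZ{3}{1}$ (Proposition \ref{prop310}) and finish with Lemma \ref{strong}. There is, however, a genuine gap. Two of the non-occurrence steps have no working argument, and the fallbacks you propose for them fail.

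\begin{itemize}
\item \emph{Why the fallbacks fail.} The graphs $\SZ{3}{2}[b_3]$ and $\SZ{3}{2}[\epsilon(a_1,b_1)]$ are both connected of diameter two, since every $a_i$ reaches every $b_j$ in two steps. Their complements are bipartite. So Theorem \ref{diameter}, the argument used for $a_2$ in Proposition \ref{propk1mL}, P\'alfy's inequality for disconnected graphs, and the (generalized) P\'alfy condition all say nothing. The second graph also has eight vertices, so it is not a subgraph of $C_{20}$ on the same vertex set.
\item \emph{The missing idea.} Recognize both graphs as members of families already classified.
In $\SZ{3}{2}[b_3]$ the vertex $a_3$ is adjacent to exactly $a_1$ and $a_2$. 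So it is a ``$c$'' vertex attached on the left, and the graph is $\SL{2}{2}$, which does not occur by Theorem \ref{Lfam}.
In $\SZ{3}{2}[\epsilon(a_1,b_1)]$ the vertex $b_1$ is adjacent to exactly $b_2,\dots,b_5$. So it is a ``$c$'' vertex attached on the right of a copy of $\SZ{3}{1}$, and the graph is $\SR{3}{1}$, which does not occur by Theorem \ref{Rfam}.
\end{itemize}

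The two remaining cases do yield to the diameter-three tool, which you mention only as a hedge. For the second one you instead lean on an unverified claim about \cite{LMS}.
\begin{itemize}
\item Deleting $\epsilon(a_3,b_3)$ puts $a_3$ and $b_3$ at distance three.
\item Deleting $b_1$ together with $\epsilon(a_1,b_4)$ puts $a_1$ and $b_4$ at distance three.
\end{itemize}
In both cases $|\rho_1\cup\rho_2|=3$ (the three $a$-vertices), while $|\rho_3\cup\rho_4|$ is $5$ or $4$, respectively. Both are less than $2^3$, contradicting Theorem \ref{diameter}\eqref{diam3}. Your description of the $\epsilon(a_3,b_3)$ case as ``the shape of $\SZ{3}{2}[b_3]$ with an extra vertex'' is not by itself an argument. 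With these four identifications supplied, your outline goes through exactly as in the paper.
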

\begin{proof}
First, let us consider removing the vertex $b_1$ and all incident edges. Doing so produces graph $\SZ{3}{1}$ referenced in Proposition \ref{prop310}, which does not occur. Now we shall examine the removal of incident edges to $b_1$. Removing the edge $\epsilon(b_1,b_i)$ for any $2\leq i\leq5$ gives us a graph that violates P\'alfy's condition with vertices $b_1$, $b_i$, and $a_j$ for some $2\leq j\leq3$ with $j\neq i$. Therefore, none of these edges can be lost. The only other edge to consider, therefore, is the one between $b_1$ and $a_1$, whose removal yields $\SR{3}{1}$. This graph is shown not to occur by Theorem \ref{Rfam}. Thus, $b_1$ is admissible.

Next we verify that $b_1$ satisfies the third condition of Definition \ref{adstrong}. We start with the removal of $b_1$ and all incident edges, and then the possible removal of the edge $\epsilon(b_i,b_j)$ for any $2\leq i<j\leq5$. Notice that by losing the aforementioned edge, we thereby violate P\'alfy's condition with the threesome $b_i$, $b_j$, and $a_l$ for some $1\leq l\leq3$ such that $l\neq i$ and $l\neq j$. Therefore none of these edges can be lost. The only other edge to consider removing is $\epsilon(a_1,b_4)$, but the consequential graph contradicts Sass's main result from \cite{S}. One can see this by denoting $p=a_1$ and $q=b_4$, forcing $|\rho_1\cup\rho_2|=3$ but $4=|\rho_3\cup\rho_4|\geq2^3=8$, a contradiction to Theorem \ref{diameter}\eqref{diam3}. Thus, $b_1$ is strongly admissible. Due to symmetry, we also get that $b_2$, $b_4$, and $b_5$ are strongly admissible.

Next, we consider removing the vertex $b_3$ and all incident edges. Doing so yields the graph $\SL{2}{2}$ from Theorem \ref{Lfam}, which does not occur. Next, we consider removing incident edges only. It was shown above that none of the edges $\epsilon(b_3,b_i)$ can be removed for any $1\leq i\leq5$ with $i\neq3$ due to P\'alfy's condition. Finally, removing the edge $\epsilon(b_3,a_3)$ reduces $\SZ{3}{2}$ to a graph of diameter three which does not occur due to Theorem \ref{diameter}. Thus, $b_3$ is admissible.

Next, we consider the graph determined by the removal of $b_3$ and all its incident edges, along with possibly the edges between $b_i$ and $b_j$ for any $1\leq i<j\leq5$ with $i\neq3$ and $j\neq3$. This will clearly violate P\'alfy's condition as above. Therefore, $b_3$ fulfills the third condition of Definition \ref{adstrong} and thus is strongly admissible.

Since $b_i$ is strongly admissible for all $1\leq i\leq5$, then by Lemma \ref{strong}, there is no normal nonabelian Sylow $b_i$-subgroup, as desired.
\end{proof}

We next address the three remaining vertices of the graph $\SZ{3}{2}$, which are the primes $a_1$, $a_2$, and $a_3$.

\begin{lemma}\label{lem32pi}
Suppose that $\SZ{3}{2}=\Delta(G)$ for some finite solvable group $G$, where $|G|$ is minimal. Then $G$ does not have any normal nonabelian Sylow $a_i$-subgroup for all $1\leq i\leq3$.
\end{lemma}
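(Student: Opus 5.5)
The plan is to apply Lemma \ref{pi} once for each of the three primes $a_1$, $a_2$, $a_3$. Lemma \ref{pi} concludes that the Sylow $q$-subgroup is not normal at all, which is stronger than what is claimed.

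First I check the standing hypotheses of Lemma \ref{pi}. The graph $\SZ{3}{2}$ satisfies P\'alfy's condition. Its vertex set is covered by the two cliques $\{a_1,a_2,a_3\}$ and $\{b_1,\ldots,b_5\}$, so among any three vertices two lie in the same clique and are adjacent. Next, suppose some proper normal subgroup $N$ of $G$ had $\Delta(N)=\Delta(G)=\SZ{3}{2}$. Then $N$ would be a smaller solvable group realizing the graph, contradicting the minimality of $|G|$. Hence $\Delta(N)$ is a proper subgraph of $\Delta(G)$ for every proper normal subgroup $N$. Finally, the admissible vertices that Lemma \ref{pi} requires will be taken among the $b_j$, all of which are admissible by Lemma \ref{lem32ad}.

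For $q=a_1$, the neighbors are $\pi=\{a_2,a_3,b_1,b_4\}$ and the non-neighbors are $\rho=\{b_2,b_3,b_5\}$.
\begin{enumerate}[(i)]
    \item Split $\pi$ as $\pi_1=\{b_1,b_4\}$ and $\pi_2=\{a_2,a_3\}$. No edge joins these two sets, since $a_2$ is adjacent only to $b_2,b_5$ on the $B$ side and $a_3$ only to $b_3$.
    \item Take $v=a_2\in\pi_2$ and $s=b_2\in\rho$. These are adjacent, and $b_2$ is admissible.
    \item Take $w=b_3\in\rho$, which is not adjacent to $a_2$.
\end{enumerate}
Lemma \ref{pi} then shows that a Sylow $a_1$-subgroup of $G$ is not normal.

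The case $q=a_2$ follows by the graph symmetry that swaps $a_1\leftrightarrow a_2$, $b_1\leftrightarrow b_2$ and $b_4\leftrightarrow b_5$. Explicitly, take $\pi_1=\{b_2,b_5\}$, $\pi_2=\{a_1,a_3\}$, $v=a_1$, $s=b_1$ and $w=b_3$.

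For $q=a_3$, the neighbors are $\pi=\{a_1,a_2,b_3\}$ and the non-neighbors are $\rho=\{b_1,b_2,b_4,b_5\}$.
\begin{enumerate}[(i)]
    \item Split $\pi$ as $\pi_1=\{b_3\}$ and $\pi_2=\{a_1,a_2\}$. Neither $a_1$ nor $a_2$ is adjacent to $b_3$.
    \item Take $v=a_1$ and $s=b_1$, which are adjacent, with $b_1$ admissible.
    \item Take $w=b_2\in\rho$, which is not adjacent to $a_1$.
\end{enumerate}
Lemma \ref{pi} again gives that a Sylow $a_3$-subgroup of $G$ is not normal.

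The main point needing care is the choice of which side is $\pi_2$. If the $B$-side neighbors of $a_i$ are used as $\pi_2$, no valid $w$ exists, because each $b_j$ is adjacent to every other vertex of $\rho$. Taking the $A$-side neighbors as $\pi_2$ avoids this, since each $a_j$ has non-neighbors among the $b$'s. In all three cases the Sylow $a_i$-subgroup is not normal, so in particular $G$ has no normal nonabelian Sylow $a_i$-subgroup for $1\leq i\leq 3$.
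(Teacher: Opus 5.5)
Your proposal is correct and follows the paper's proof essentially verbatim. The paper also applies Lemma \ref{pi} with exactly your choices: $\pi_1=\{b_1,b_4\}$, $\pi_2=\{a_2,a_3\}$, $v=a_2$, $s=b_2$, $w=b_3$ for $q=a_1$; symmetry for $a_2$; and $\pi_1=\{b_3\}$, $\pi_2=\{a_1,a_2\}$, $v=a_1$, $s=b_1$, $w=b_2$ for $q=a_3$, with admissibility of the $b_j$ taken from Lemma \ref{lem32ad}. Your explicit verification of P\'alfy's condition and of the proper-normal-subgroup hypothesis (via minimality of $|G|$) is a small addition that the paper leaves implicit.
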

\begin{proof}
Following the notation of Lemma \ref{pi}, let us first consider $q=a_1$, which leads to $\pi=\{a_2,a_3,b_1,b_4\}$ and $\rho=\{b_2,b_3,b_5\}$. We can then denote $\pi_1=\{b_1,b_4\}$ and $\pi_2=\{a_2,a_3\}$. This allows us to consider $v=a_2\in\pi_2$, $s=b_2\in\rho$, and $w=b_3\in\rho$, where one can see that $a_2$ is adjacent to $b_2$ but not adjacent to $b_3$. Finally, note that $s=b_2$ is indeed admissible, as shown in Lemma \ref{lem32ad}. With all the conditions fulfilled, the result of Lemma \ref{pi} follows, and so there is no normal nonabelian Sylow $a_1$-subgroup. One can follow an identical argument with $a_2$ due to symmetry.

Next, we examine the situation when $q=a_3$, which leads to $\pi=\{a_1,a_2,b_3\}$ and $\rho=\{b_1,b_2,b_4,b_5\}$. We then separate $\pi$ into $\pi_1=\{b_3\}$ and $\pi_2=\{a_1,a_2\}$. This permits the consideration of $v=a_1\in\pi_2$ and $s=b_1\in\rho$, where we know that $s$ is admissible by Lemma \ref{lem32ad}. Lastly, we denote $w=b_2\in\rho$, which is not adjacent to $v=a_1$. Hence, again by Lemma \ref{pi}, the result follows.
\end{proof}

\begin{lemma}\label{lem32sub}
No proper subgraph of $\SZ{3}{2}$ with the same vertex set occurs as the prime character degree graph of any solvable group.
\end{lemma}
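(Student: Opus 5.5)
The plan is to split into the disconnected and connected cases. In each, I first cut down the edges that can possibly be removed, then match every remaining configuration with a graph already classified earlier in the paper.

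\emph{Disconnected subgraphs.} If a subgraph $\Gamma$ of $\SZ{3}{2}$ with the same vertex set is disconnected and occurs, then it has exactly two complete components. The only way to write $\{a_1,a_2,a_3,b_1,\ldots,b_5\}$ as a disjoint union of two cliques of $\SZ{3}{2}$ is $\{a_1,a_2,a_3\}\sqcup\{b_1,\ldots,b_5\}$. The reason is that any clique meeting both $A$ and $B$ is contained in some $\{a_i,b_i,b_{3+i}\}$, and its complement is then not a clique. This leaves $K_3\cup K_5$. P\'alfy's inequality from \cite{P2} demands that the larger component have at least $2^3-1=7$ vertices, but here it has only $5$. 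Hence no disconnected subgraph occurs.

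\emph{Connected subgraphs: reducing to the $A$--$B$ edges.} Next I show that no edge inside $A$ or inside $B$ can be lost. For $\epsilon(b_i,b_j)$ this was already done in Lemma \ref{lem32ad}, using P\'alfy's condition (Lemma \ref{PC}) with a suitable $a_l$. For $\epsilon(a_i,a_j)$, P\'alfy's condition with the trio $a_i$, $a_j$, $b_l$ works, where $b_l$ is a $b$-vertex adjacent to neither $a_i$ nor $a_j$; for example, $b_3$ serves for the pair $\{a_1,a_2\}$. So $\Gamma$ is obtained from $\SZ{3}{2}$ by deleting a nonempty proper subset $S$ of the five cross edges
\[
\epsilon(a_1,b_1),\ \epsilon(a_1,b_4),\ \epsilon(a_2,b_2),\ \epsilon(a_2,b_5),\ \epsilon(a_3,b_3).
\]

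\emph{Case analysis on $S$.}
\begin{enumerate}[(1)]
\item Suppose some $a_i$ loses all of its cross edges. Then $a_i$ is adjacent only to the other two $a$'s, and its distance to some $b$-vertex is $3$. For instance, if $\epsilon(a_3,b_3)\in S$, then $d(a_3,b_3)=3$ via $a_3$--$a_1$--$b_1$--$b_3$. Since no vertex has eccentricity above $3$, $\Gamma$ has diameter three. I then partition the vertex set as in Theorem \ref{diameter} with $p=a_i$ and $q$ a farthest $b$-vertex. I expect $|\rho_1\cup\rho_2|\ge 3$ while $|\rho_3\cup\rho_4|\le 5<8$, which contradicts part \eqref{diam3}, or else $|\rho_3|<3$, which contradicts part \eqref{diam1}.
\item Suppose every $a_i$ keeps at least one cross edge, and exactly one of $a_1,a_2$ loses one edge, say $\epsilon(a_1,b_4)$. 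The orphaned vertex $b_4$ is then adjacent to all of $B$ and to nothing in $A$. This makes $\Gamma$ isomorphic to $\SR{3}{1}$, with $c_1=b_4$, and Theorem \ref{Rfam} excludes it. The symmetric subcases are the same.
\item Suppose every $a_i$ keeps at least one cross edge and both $a_1$ and $a_2$ lose one edge. Then each $a_i$ keeps exactly one cross edge, and these edges form a matching into the $K_5$ on $B$. So $\Gamma\cong\Gamma_{5,3}$, which is excluded by Theorem \ref{KT}.
\end{enumerate}

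The main obstacle is case (1). There the Sass partition has to be carried out explicitly for each way a vertex $a_i$ can become isolated from $B$, including the variants where additional cross edges are removed as well. I would handle $a_3$ and $a_1$ (equivalently $a_2$) separately. In each case I would identify $\rho_1,\rho_2$ as the vertices at distance $\le1$ from $p$ on the $A$ side, and then check the count required by Theorem \ref{diameter}\eqref{diam3}. This mirrors the computation for $\epsilon(a_1,b_4)$ in Lemma \ref{lem32ad}.
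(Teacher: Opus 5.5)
Your proof is correct. It reaches the result by a different organization of the cross-edge deletions, and so relies on different inputs.

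After the same reduction to the five cross edges, the paper enumerates by the \emph{number} of deleted edges:
\begin{itemize}
\item single deletions are covered by Lemma \ref{lem32ad}, via $\SR{3}{1}$ or a diameter-three argument;
\item two deletions give $C_{13}$ or $\Gamma_{5,3}$, three give $C_4$ or $C_5$, and four give $C_1$, all identified in and cited from the Lewis--Summers classification of eight-vertex graphs;
\item five deletions give $K_3\cup K_5$, which P\'alfy's inequality rules out.
\end{itemize}

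You instead split on whether some $a_i$ is cut off from $B$. The graphs $C_{13}$, $C_4$, $C_5$, $C_1$ and the deletion of $\epsilon(a_3,b_3)$ all land in your case (1). So one diameter-three argument replaces the external citations, and only $\SR{3}{1}$ and $\Gamma_{5,3}$ remain. This gains you three things:
\begin{itemize}
\item the proof is self-contained, using only the tools of Section \ref{secpre};
\item exhaustiveness is evident from how many cross edges each $a_i$ keeps;
\item you justify that $A\sqcup B$ is the only partition of the vertex set into two cliques, which the paper takes for granted.
\end{itemize}
The paper's route buys brevity, given the existing catalog.

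Case (1) is also easier than you anticipate. Once $a_i$ has no cross edges, Sass's two cliques $\rho_1\cup\rho_2$ and $\rho_3\cup\rho_4$ must be $A$ and $B$, since that is the only two-clique partition. Part (ii) of Theorem \ref{diameter} then forces $\rho_1\cup\rho_2=A$. Now $5<2^3$ contradicts part (iii) uniformly, with no per-configuration bookkeeping.

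Drop the fallback ``$|\rho_3|<3$''. It is false when only $\epsilon(a_3,b_3)$ is deleted, where $\rho_3=\{b_1,b_2,b_4,b_5\}$.
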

\begin{proof}
While showing admissibility, we have demonstrated that none of the edges in the complete subgraph on $\{b_1,b_2,b_3,b_4,b_5\}$ can be removed. Likewise, it is easy to see that we cannot lose any of the edges in the complete subgraph on $\{a_1,a_2,a_3\}$. This is due to P\'alfy's condition. Therefore, there are only five edges remaining to be considered, which are:
\begin{equation}\label{edges}
\epsilon(a_1,b_1),~\epsilon(a_1,b_4),~\epsilon(a_2,b_2),~\epsilon(a_2,b_5),\text{~and~}\epsilon(a_3,b_3).
\end{equation}
We shall analyze the subgraphs determined by removing any of the aforementioned edges, on their own or in a combination, and show they cannot occur. Before we begin, recall that $b_1$ and $b_4$ are symmetric, meaning edges $\epsilon(a_1,b_1)$ and $\epsilon(a_1,b_4)$ are symmetric as well. Likewise, $b_2$ and $b_5$ are symmetric, which means the edges $\epsilon(a_2,b_2)$ and $\epsilon(a_2,b_5)$ are also symmetric. Further, the vertices $a_1$ and $a_2$ are symmetric too.

Now, it was shown in Lemma \ref{lem32ad} that none of the above edges could be lost on its own. We proceed to consider losing a pair of the edges from \eqref{edges}. Removing any two of the aforementioned edges yields either the graph $C_{13}$ from \cite{LS2} or the graph $\Gamma_{5,3}$ (which is the graph $C_{14}$ from \cite{LS2}), both of which have been shown not to occur. Likewise, the loss of three edges from \eqref{edges} results in either the graph $C_{4}$ or the graph $C_{5}$ from \cite{LS2}, both of which the authors classified as non-occurring. Next, the removal of four of the edges mentioned above results in the graph $C_{1}$ from \cite{LS2}, which was shown not to occur as the prime character degree graph of any solvable group. Finally, losing all five edges of \eqref{edges} yields the disconnected graph of $K_3$ and $K_5$, which the authors of \cite{LS2} showed not to occur by way of P\'alfy's inequality from \cite{P2}.

Hence, no proper subgraph of $\SZ{3}{2}$ with the same vertex set occurs as the prime character degree graph of any solvable group.
\end{proof}

Under the assumption that there is some finite solvable group $G$ such that $\Delta(G)=\SZ{3}{2}$ such that $|G|$ is minimal, we now have the conclusion that there are no normal nonabelian Sylow $p$-subgroups for all $p\in\rho(G)$. This is due to Lemmas \ref{lem32ad} and \ref{lem32pi}. Equipped with the Fitting subgroup $F$ of $G$, we then have that $\rho(G)=\pi(|G:F|)$, which implies $\rho(G)=\rho(G/\Phi(G))$. However, due to Lemma \ref{lem32sub}, we are then forced to conclude that $\Phi(G)=1$, leading to the existence of a subgroup $H$ of $G$ such that $G=HF$ such that $H\cap F=1$ (via Lemma III 4.4 from \cite{H}). As before, our next task is to verify that $F$ is in fact minimal normal in $G$.

\begin{lemma}\label{lem32min}
Suppose that $\SZ{3}{2}=\Delta(G)$ for some finite solvable group $G$, where $|G|$ is minimal. Then the Fitting subgroup $F$ of $G$ is minimal normal in $G$.
\end{lemma}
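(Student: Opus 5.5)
I will follow the template of Lemma \ref{lem21mLfit}. Suppose, for a contradiction, that $F$ is not minimal normal in $G$, and let $N$ be a normal subgroup of $G$ with $1<N<F$. By Theorem III 4.5 of \cite{H}, since $\Phi(G)=1$, there is a normal subgroup $M$ of $G$ with $F=N\times M$ and $M>1$. By minimality of $|G|$, both $\rho(G/N)$ and $\rho(G/M)$ are proper subsets of $\rho(G)$.

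Next I will show that $\rho(G)=\rho(G/N)\cup\rho(G/M)$. Take a prime $q\in\rho(G)\setminus\rho(G/N)$. Then $G/N$ has a normal abelian Sylow $q$-subgroup. Since $G$ has no normal nonabelian Sylow $q$-subgroup (Lemmas \ref{lem32ad} and \ref{lem32pi}), and since groups with a normal abelian Sylow $q$-subgroup form a formation, we must have $q\in\rho(G/M)$.

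As in \cite{LMS}, the set $\rho(G)\setminus[\rho(G/N)\cap\rho(G/M)]$ then lies in a complete subgraph of $\SZ{3}{2}$. Hence it is contained in one of the maximal cliques:
\begin{itemize}
\item[] $A=\{b_1,\dots,b_5\}$,
\item[] $\{a_1,a_2,a_3\}$,
\item[] $\{a_1,b_1,b_4\}$,
\item[] $\{a_2,b_2,b_5\}$,
\item[] $\{a_3,b_3\}$.
\end{itemize}
The last four cases are the easy ones. In each, $\rho(G/N)$ and $\rho(G/M)$ are forced to contain most of the vertices, which leaves only a short list of possible vertex sets for $\rho(G/N)$ (using the symmetries $a_1\leftrightarrow a_2$, $b_1\leftrightarrow b_4$, $b_2\leftrightarrow b_5$). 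For each such vertex set I will argue one of two ways:
\begin{itemize}
\item[] no connected subgraph on it occurs, via the cut-vertex result of \cite{MQ}, a diameter-three argument with Theorem \ref{diameter}, or Theorem \ref{KT} and its addendum;
\item[] the graph is disconnected, in which case \cite{L} yields a central Sylow $p$-subgroup of $G/N$ for some omitted $p=b_i$.
\end{itemize}
In the disconnected case, the central Sylow $p$-subgroup forces $O^{b_i}(G)<G$. This contradicts Lemma \ref{bigoh}, since every $b_i$ is admissible. The subtle point is that we only know the $b_i$ are admissible, not the $a_i$. So whenever the contradiction needs an omitted $a_i$, I will instead use a structural argument, or rule the vertex set out entirely by P\'alfy's inequality \cite{P2}.

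The main obstacle is the case where the set lies in $A=\{b_1,\dots,b_5\}$, since then all the $a_i$ lie in both $\rho(G/N)$ and $\rho(G/M)$. There I plan to follow the argument of \cite{L3} used in Lemma \ref{lem21mLfit}:
\begin{enumerate}
\item Show that $E$, the Fitting subgroup of $H$, is the Hall $A$-subgroup of $H$.
\item Produce $\chi\in\Irr(G)$ with $b_1b_2b_3b_4b_5\mid\chi(1)$, and use coprimality to get $\chi_{FE}=\theta$ irreducible.
\item Apply Gallagher's Theorem to conclude that $G/FE$ is abelian.
\end{enumerate}
Because the $a_i$ are not known to be admissible, Lemma \ref{bigoh} cannot finish the proof directly. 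Instead I will look at $\cd(G)$: once $G/FE$ is abelian, every character degree divisible by some $a_i$ is forced to be divisible by all of $b_1,\dots,b_5$. That would make each $a_i$ adjacent to every $b_j$, contradicting the structure of $\SZ{3}{2}$, for instance the non-adjacency of $a_3$ and $b_1$. If this fails, my fallback is to use $O^{a_3}(G)$ together with the $\pi$-partition argument of Lemma \ref{pi}. Combining all the cases, no such $N$ exists, so $F$ is minimal normal in $G$.
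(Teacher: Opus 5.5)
Your skeleton ($F=N\times M$, $\rho(G)=\rho(G/N)\cup\rho(G/M)$, then a split by maximal clique) is the paper's. However, you have attached the two key arguments to the wrong cliques, and both of those cases then fail.

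The clique $\{a_1,a_2,a_3\}$ is not an easy case. There $\{b_1,\dots,b_5\}\subseteq\rho(G/N)\cap\rho(G/M)$, so the only vertices either quotient can omit are $a_i$'s, whatever labeling you choose. These vertex sets carry occurring or possibly occurring graphs:
\begin{itemize}
\item the induced graph on $\{a_3,b_1,\dots,b_5\}$ is $\Gamma_{5,1}$, which occurs by Theorem \ref{KT};
\item the induced graph on $\rho(G)\setminus\{a_3\}$ is $\SR{2}{2}$, which possibly occurs by Theorem \ref{Rfam}.
\end{itemize}
Nothing from \cite{MQ}, Theorem \ref{diameter}, Theorem \ref{KT} or P\'alfy's inequality excludes them. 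Since no $b_j$ is omitted, you can never reach $O^{b_j}(G)<G$. The ``structural argument'' you promise for omitted $a_i$ is exactly the missing content.

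Conversely, in the clique $\{b_1,\dots,b_5\}$ case your Hall/Gallagher argument ends with $G/FE$ abelian of $\{a_1,a_2,a_3\}$-order. That yields only $O^{a_i}(G)<G$, which is no contradiction, because the $a_i$ are not known to be admissible. Indeed $\SZ{3}{2}[a_1]\cong\SRi{2}{1}{2}$ and $\SZ{3}{2}[a_3]\cong\SR{2}{2}$ are unclassified.

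Your replacement claim, that every degree divisible by some $a_i$ must then be divisible by $b_1b_2b_3b_4b_5$, does not follow. Gallagher's theorem describes only the characters of $G$ lying over the particular $\theta$ with $\chi_{FE}=\theta$. It says nothing about characters over other (for instance linear) characters of $FE$, whose degrees can involve $a_i$ and no $b_j$. The fallback via Lemma \ref{pi} does not help either: that lemma gives non-normality of a Sylow subgroup, not $O^{a_3}(G)=G$.

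The repair is to swap the two arguments, as the paper does:
\begin{itemize}
\item Run the \cite{L3} argument on $\{a_1,a_2,a_3\}$. There every $b_i$ divides $|G:FE|=|H:E|$, so $G/FE$ abelian gives $O^{b_i}(G)<G$. This contradicts Lemma \ref{bigoh}, since each $b_i$ is admissible by Lemma \ref{lem32ad}.
\item Handle $\{b_1,\dots,b_5\}$ by enumeration. Now $\{a_1,a_2,a_3\}\subseteq\rho(G/N)$ and only $b_j$'s are omitted. So each candidate either fails to occur or is disconnected with a central Sylow $b_j$-subgroup.
\end{itemize}
Likewise, in the three mixed cliques, analyze the quotient that contains the $a$-vertex, so that it omits only $b$'s.
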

\begin{proof}
We follow an argument similar to what was done in Lemma \ref{lem21mLfit}. We adopt the notation from the previous paragraph and further set $E$ to be the Fitting subgroup of $H$.

For the sake of contradiction, we suppose that $F$ is not minimal normal in $G$. Therefore, there must exist some normal subgroup $N$ of $G$ such that $1<N<F$.

By Lemma III 4.5 from \cite{H}, we see that there exists a nontrivial normal subgroup $M$ of $G$ such that $F=N\times M$, and consequently
\begin{equation}\label{eqsub}
\rho(G/N)\subset\rho(G)\text{~and~}\rho(G/M)\subset\rho(G).  
\end{equation}
We make note that these are indeed proper subsets. Therefore, we may now consider an arbitrary $q\in\rho(G)\setminus\rho(G/N)$. It is known that $G/N$ has a normal nonabelian Sylow $q$-subgroup whose class is a formation, and so this implies that $q\in\rho(G/M)$. This forces the equality of
\begin{equation}\label{equn}
\rho(G)=\rho(G/N)\cup\rho(G/M).
\end{equation}
In particular, let $q\in\rho(G)\setminus\rho(G/N)$. Since $q\in\rho(G)=\pi(|G:F|)$ and $G/F\cong H$, then we have that $q\mid|H|$. Moreover, by the usual formation argument, we then get that $q\in\rho(G/M)$. Hence, the primes in $\rho(G)\setminus[\rho(G/N)\cap\rho(G/M)]$ arise from the relevant Hall subgroup of $H$, and the character degree argument involving $E$ forces this set to lie in a complete subgraph of $\Delta(G)$. This leaves us with five cases to consider. So $\rho(G)\setminus[\rho(G/N)\cap\rho(G/M)]$ must be contained in $A=\{a_1,a_2,a_3\}$, $B=\{a_1,b_1,b_4\}$, $\widehat{B}=\{a_2,b_2,b_5\}$, $C=\{a_3,b_3\}$, or $D=\{b_1,b_2,b_3,b_4,b_5\}$.

First suppose that $\rho(G)\setminus[\rho(G/N)\cap\rho(G/M)]\subseteq A$. We follow the argument that Lewis presents in \cite{L3}, and see that $E$, the Fitting subgroup of $H$, has a Hall $A$-subgroup of $H$. Since $\cd(G)$ has a character degree that is divisible by all the primes dividing $|E|$, and since $|E|$ is divisible by no other primes, then we know that $E$ is in fact \emph{the} Hall $A$-subgroup of $H$. Next, let $\chi\in\Irr(G)$ such that $a_1a_2a_3$ divides $\chi(1)$, and let $\theta$ be an irreducible constituent of $\chi_{FE}$. One can verify that $\chi(1)/\theta(1)$ divides $|G:FE|$, which forces $\chi_{FE}=\theta$. Since $a_1$, $a_2$, and $a_3$ all divide $\theta(1)$, and the only possible primes that could divide an element in the set of character degrees for $G/FE$ are $b_i$ for $1\leq i\leq5$, then by Gallagher's Theorem we get that $\cd(G/FE)=\{1\}$, and so $G/FE$ is abelian. Since $G/FE$ is abelian, then $O^{b_i}(G)<G$ for some $b_i$, a contradiction because $O^{b_i}(G)=G$ by Lemma \ref{bigoh} since each $b_i$ is admissible by Lemma \ref{lem32ad}. Hence, this case cannot occur.

Next, suppose that $\rho(G)\setminus[\rho(G/N)\cap\rho(G/M)]\subseteq B$. It is easy to see that this implies that $\{a_2,a_3,b_2,b_3,b_5\}$ must be contained in both $\rho(G/N)$ and $\rho(G/M)$. Since $a_1\in\rho(G)$ and due to \eqref{equn}, then without loss of generality we may suppose that $a_1\in\rho(G/N)$. Due to \eqref{eqsub}, we know $\rho(G/N)$ is proper in $\rho(G)$, and so there are only three options for this vertex set: (i) $\{a_1,a_2,a_3,b_2,b_3,b_5\}$, (ii) $\{a_1,a_2,a_3,b_1,b_2,b_3,b_5\}$, or (iii) $\{a_1,a_2,a_3,b_2,b_3,b_4,b_5\}$. Supposing case (i), we have that $\rho(G/N)=\{a_1,a_2,a_3,b_2,b_3,b_5\}$. One can see that no subgraph, connected or disconnected, occurs with this vertex set, and therefore this case cannot happen. For case (ii), we are left with $\rho(G/N)=\rho(G)\setminus\{b_4\}$, and here one again can verify that no subgraph occurs with this vertex set. Therefore, this case cannot happen either. Finally, since $b_1$ and $b_4$ are symmetric vertices, we see that case (iii) is identical to case (ii). Hence, the case of $\rho(G)\setminus[\rho(G/N)\cap\rho(G/M)]\subseteq B$ must not happen.

Due to symmetry, notice that supposing $\rho(G)\setminus[\rho(G/N)\cap\rho(G/M)]\subseteq\widehat{B}$ follows identically to the case of $\rho(G)\setminus[\rho(G/N)\cap\rho(G/M)]\subseteq B$ handled above.

Next, suppose that $\rho(G)\setminus[\rho(G/N)\cap\rho(G/M)]\subseteq C$. This forces $\{a_1,a_2,b_1,b_2,b_4,b_5\}\subseteq\rho(G/N)\cap\rho(G/M)$. Again due to \eqref{eqsub} and \eqref{equn}, we are pigeon-holed into one case, which we can without loss of generality label as
$$
\rho(G/N)=\{a_1,a_2,a_3,b_1,b_2,b_4,b_5\}\text{~and~}\rho(G/M)=\{a_1,a_2,b_1,b_2,b_3,b_4,b_5\}.
$$
However, notice that any subgraph of $\Delta(G)$ with the vertex set of $\rho(G/N)$ described above, either connected or disconnected, does not occur as the prime character degree graph of any solvable group. Thus, this case cannot happen.

Finally, we are then left with the case of $\rho(G)\setminus[\rho(G/N)\cap\rho(G/M)]\subseteq D$, which implies $\{a_1,a_2,a_3\}\subseteq\rho(G/N)\cap\rho(G/M)$. Since $b_3\in\rho(G)$ and due to \eqref{equn}, without loss of generality we may suppose that $b_3\in\rho(G/N)$. Furthermore, due to \eqref{eqsub}, we know that $\rho(G/N)\neq\rho(G)$. Since $b_1$ and $b_4$ are symmetric, as are $b_2$ and $b_5$, we then have the following distinct non-symmetric options for the vertex set $\rho(G/N)$: (i) $\{a_1,a_2,a_3,b_3\}$, (ii) $\{a_1,a_2,a_3,b_1,b_3\}$, (iii) $\{a_1,a_2,a_3,b_1,b_2,b_3\}$, (iv) $\{a_1,a_2,a_3,b_1,b_3,b_4\}$, or (v) $\{a_1,a_2,a_3,b_1,b_2,b_3,b_4\}$. For (i), supposing $\rho(G/N)=\{a_1,a_2,a_3,b_3\}$ forces $\rho(G/M)=\{a_1,a_2,a_3,b_1,b_2,b_4,b_5\}$ due to \eqref{eqsub} and \eqref{equn}. However, as mentioned in case $C$ above, no graph occurs with this vertex set. Thus, this case cannot happen. For option (ii), the only graph that could occur with this vertex set is the disconnected graph with complete components on the vertex sets $\{a_1,a_2,a_3\}$ and $\{b_1,b_3\}$. Then by Theorem 5.5 from \cite{L} we conclude that $G/N$ must have a central Sylow $b_2$-subgroup, a central Sylow $b_4$-subgroup, or a central Sylow $b_5$-subgroup, in which case
$$
O^{b_2}(G)<G\text{~or~}O^{b_4}(G)<G\text{~or~}O^{b_5}(G)<G.
$$
Regardless, this will lead to a contradiction, as each of those vertices has been shown to be admissible in Lemma \ref{lem32ad}, therefore forcing
$$
O^{b_2}(G)=G\text{~and~}O^{b_4}(G)=G\text{~and~}O^{b_5}(G)=G
$$
by way of Lemma \ref{bigoh}. Hence, this case also cannot occur. For (iii), we get $\rho(G/N)=\{a_1,a_2,a_3,b_1,b_2,b_3\}$, and it is easy to see that neither a connected nor a disconnected subgraph with that vertex set can occur, and so this case cannot happen. For (iv), though distinct from (iii) above, one sees that the conclusion is the same. Finally, supposing (v), we get that $\rho(G/N)=\rho(G)\setminus\{b_5\}$, and no subgraph with this vertex set occurs, either connected or disconnected. Hence, we must have that $\rho(G)\setminus[\rho(G/N)\cap\rho(G/M)]\subseteq D$ cannot happen.

We have exhausted all options for the set $\rho(G)\setminus[\rho(G/N)\cap\rho(G/M)]$, and so our original supposition must have been false. Therefore, the Fitting subgroup $F$ must be minimal normal in $G$.
\end{proof}

\begin{proposition}\label{prop320}
The graph $\SZ{3}{2}$ does not occur as the prime character degree graph of any solvable group.
\end{proposition}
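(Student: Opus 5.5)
We argue by contradiction, in the same way Proposition \ref{prop21mL} was finished. Suppose $\SZ{3}{2}=\Delta(G)$ for some finite solvable group $G$, and choose $G$ with $|G|$ minimal.

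First we collect the structural facts already established. By Lemmas \ref{lem32ad} and \ref{lem32pi}, $G$ has no normal nonabelian Sylow $p$-subgroup for any $p\in\rho(G)$. Hence $\rho(G)=\pi(|G:F|)=\rho(G/\Phi(G))$. Lemma \ref{lem32sub} rules out every proper subgraph on the same vertex set, whether connected or disconnected, so minimality forces $\Phi(G)=1$. Lemma \ref{lem32min} then gives that the Fitting subgroup $F$ is minimal normal in $G$.

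To apply Proposition \ref{final} we also need that $\Delta(N)$ and $\Delta(G/N)$ are proper subgraphs of $\Gamma$ for every proper normal subgroup $N$ of $G$.
\begin{itemize}
\item For $\Delta(G/N)$, this follows from minimality of $|G|$.
\item For $\Delta(N)$, we use the standard reduction in \cite{BLL2}. If $\Delta(N)=\Gamma$, then minimality would be violated, because a normal subgroup realizing the whole graph is a smaller group with the same graph.
\end{itemize}
The graph has $8\geq5$ vertices and satisfies P\'alfy's condition, since its complement is bipartite-like with no triangle.

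The key step is choosing the vertices $a,b,c,d$ for Proposition \ref{final}. We need $c$ and $d$ admissible, and by Lemma \ref{lem32ad} the $b_i$ qualify. Take $a=a_1$, $c=b_1$ (adjacent to $a_1$), $b=b_2$ (not adjacent to $b_1$?).

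That choice fails, because $b_1b_2$ is an edge: the $B$-clique is complete, so no $b$-vertex can serve as $b$. Instead we let $b$ be an $a$-vertex not adjacent to $c$. So we take:
\begin{itemize}
\item $a=a_1$ and $c=b_1$, which are adjacent;
\item $b=a_2$, which is not adjacent to $b_1$;
\item $d=b_2$, which is admissible and not adjacent to $a_1$.
\end{itemize}
These four vertices are distinct and satisfy all the adjacency hypotheses. Proposition \ref{final} then shows that $\Gamma$ is not the prime character degree graph of any solvable group, contradicting our assumption.

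The main obstacle I expect is verifying the side hypothesis on proper normal subgroups $N$, namely that $\Delta(N)$ is a proper subgraph. If the minimality reduction does not immediately handle it, I would argue through $N\geq F$: since $F$ is minimal normal and $\rho(N)$ misses a prime of $H$ outside $N$, the graph $\Delta(N)$ cannot be all of $\Gamma$.
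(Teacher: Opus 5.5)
Your proposal is correct and is essentially the paper's proof. You take a minimal counterexample, use Lemma \ref{lem32min} to get $F$ minimal normal, and apply Proposition \ref{final} with the same choice as the paper: $a=a_1$, $b=a_2$, $c=b_1$, $d=b_2$, where $c$ and $d$ are admissible by Lemma \ref{lem32ad}. Your extra remarks (the bipartite complement giving P\'alfy's condition, and minimality of $|G|$ giving the proper-subgraph hypotheses on $\Delta(N)$ and $\Delta(G/N)$ for nontrivial proper $N$) only make explicit what the paper leaves implicit, so you can drop the discarded attempt with $b=b_2$ and the closing hedge about $N\geq F$.
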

\begin{proof}
For the sake of contradiction, suppose that $\SZ{3}{2}=\Delta(G)$ for some finite solvable group $G$, where $|G|$ is minimal. Due to Lemma \ref{lem32min}, the Fitting subgroup $F$ of $G$ is minimal normal in $G$, and so using the notation of Proposition \ref{final}, we take $a=a_1$, $b=a_2$, $c=b_1$, and $d=b_2$, noting that $b_1$ and $b_2$ are admissible by Lemma \ref{lem32ad}. Proposition \ref{final} grants the contradiction, and hence the graph $\SZ{3}{2}$ must not occur as the prime character degree graph of any solvable group.
\end{proof}

As mentioned above, the graph $\SZ{3}{2}$ has eight vertices. In particular, one can reconcile $\SZ{3}{2}$ as the graph $C_{95}$ from \cite{LS2}. Using Proposition \ref{prop320}, we can now officially classify the graph $\SZ{3}{2}$ ($C_{95}$) as non-occurring.

\begin{corollary}\label{cor330}
The graph $\SZ{3}{3}$ does not occur as the prime character degree graph of any solvable group.
\end{corollary}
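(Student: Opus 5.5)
We plan to show that every vertex of $\SZ{3}{3}$ is admissible and then invoke Theorem \ref{admissible}. This is the cheaper route suggested by the word ``Corollary'': it leans on Proposition \ref{prop320} instead of repeating the Fitting-subgroup machinery. The graph has vertices $a_1,a_2,a_3$ forming a $K_3$, vertices $b_1,\ldots,b_6$ forming a $K_6$, and the six cross edges $\epsilon(a_i,b_i)$ and $\epsilon(a_i,b_{3+i})$ for $1\leq i\leq3$. By symmetry it suffices to treat $a_1$ and $b_1$; every $b_j$ is equivalent to $b_1$, since $b_i$ and $b_{3+i}$ are interchangeable and the indices $i$ can be permuted.

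For $b_1$, removing the vertex gives $\SZ{3}{3}[b_1]\cong\SZ{3}{2}$, which does not occur by Proposition \ref{prop320}. Removing an edge $\epsilon(b_1,b_j)$ violates P\'alfy's condition (Lemma \ref{PC}) on the triple $b_1,b_j,a_l$, where $a_l$ is adjacent to neither $b_1$ nor $b_j$. Such an $a_l$ exists because $b_1$ and $b_j$ together have at most two $a$-neighbours among three. The remaining edge is $\epsilon(a_1,b_1)$. Removing it gives a graph in which $a_1$ has the single $b$-neighbour $b_4$, and this is isomorphic to the right-hand graph $\SR{3}{2}$: $b_1$ is adjacent to all other $b$'s, so it plays the role of $c_1$, with $a_1$ matched to $b_4$ and $a_2,a_3$ each matched to two vertices. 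That graph does not occur by Theorem \ref{Rfam}. Combinations of removals that include some $\epsilon(b_1,b_j)$ still violate P\'alfy's condition on the same triple. Hence $b_1$ is admissible.

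For $a_1$, removing the vertex yields a graph in which $a_2$ and $a_3$ form a $K_2$ attached to the $K_6$ on the $b$'s by four edges. I expect this graph not to occur because it is a connected proper subgraph, with the same vertex set, of $\Gamma_{6,2}$-type constructions. More safely, it has diameter three with $|\rho_1\cup\rho_2|$ small, so Theorem \ref{diameter} applies. Concretely, take the distance-three pair to be $a_2$ and $b_3$ after deleting edges. Where that is not available, I would apply the argument of Proposition \ref{propk1mL} (two normal Sylow subgroups for primes in $\rho_3$) or compare with Lemma \ref{lem32ad}. Removing $\epsilon(a_1,a_2)$ violates P\'alfy's condition with $a_1,a_2,b_3$. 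Removing one or both of $\epsilon(a_1,b_1)$ and $\epsilon(a_1,b_4)$ leaves a graph that is either $\SZ{3}{3}$ minus a cross edge, which is isomorphic to the graph $\SR{3}{2}$ handled above, or a graph in which $a_1$ has no $b$-neighbours. The latter has diameter three, with $\rho_1\cup\rho_2$ essentially $\{a_1\}$ together with a few others, so I would contradict Theorem \ref{diameter}\eqref{diam1} or \eqref{diam3}.

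The main obstacle is the vertex-deletion step for $a_1$, because $\SZ{3}{3}[a_1]$ has a $K_2$ on $\{a_2,a_3\}$ with two matched vertices each, and it is not obviously covered by a named earlier result. I would first check that it is a proper connected subgraph of $\Gamma_{6,2}$ with the same vertex set, which would make it non-occurring by Theorem \ref{KT} (the required condition $k\geq t\geq2$ holds there). Only if that identification fails would I fall back on the diameter-three analysis. Once every vertex is admissible, Theorem \ref{admissible} finishes the proof.
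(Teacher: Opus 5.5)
\textbf{There is a genuine gap at the vertex $a_1$ (equivalently $a_2$, $a_3$), and it cannot be closed with what is currently known.}

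Deleting $a_1$ leaves $a_2,a_3$ joined to the $K_6$ on the $b$'s by the four edges $\epsilon(a_2,b_2)$, $\epsilon(a_2,b_5)$, $\epsilon(a_3,b_3)$, $\epsilon(a_3,b_6)$, while $b_1,b_4$ have no $a$-neighbours. Sending $b_1,b_4$ to $c_1,c_2$ shows $\SZ{3}{3}[a_1]\cong\SRi{2}{2}{2}$. By Theorem B, this is one of the graphs whose occurrence is still open; it is $B_{16}$ of \cite{LS2}. So condition (i) of Definition \ref{adstrong} cannot be verified for any $a_i$, and Theorem \ref{admissible} is out of reach.

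Both of your fallback routes fail:
\begin{enumerate}[(a)]
\item $\SZ{3}{3}[a_1]$ has four cross edges, whereas $\Gamma_{6,2}$ has two. So it is a proper \emph{supergraph} of $\Gamma_{6,2}$, and the addendum of Theorem \ref{KT} says nothing about it.
\item $\SZ{3}{3}[a_1]$ has diameter two, since each of $a_2,a_3$ reaches every $b_j$ through one of its $b$-neighbours. So neither Theorem \ref{diameter} nor the two-normal-Sylow argument of Proposition \ref{propk1mL} applies.
\end{enumerate}
Deleting extra edges to manufacture a distance-three pair is not permitted, because condition (i) concerns the vertex-deleted graph itself. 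This is exactly why the paper separates $k=3$ from $k\geq4$. For $\SZ{k}{n}$ with $k\geq4$ and $n\geq2$, deleting $a_1$ gives $\SRi{k-1}{n-1}{2}$ with $k-1\geq3$, which Proposition \ref{propknmR} covers. For $k=3$, the deleted graph lands in the unresolved $k=2$ right family.

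Your treatment of $b_1$ is correct: $\SZ{3}{3}[b_1]\cong\SZ{3}{2}$, $\SZ{3}{3}[\epsilon(a_1,b_1)]\cong\SR{3}{2}$, and the P\'alfy triples all check out. Your edge-removal checks at $a_1$ are also fine. But admissibility of the $b_j$ alone does not give non-occurrence.

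The paper's argument is built so that the $a_i$ never need to be admissible:
\begin{enumerate}[(1)]
\item It upgrades the $b_j$ to strongly admissible. The extra condition (iii) checks are P\'alfy triples plus a diameter-three count, as in Lemma \ref{lem32ad}. Lemma \ref{strong} then rules out normal nonabelian Sylow $b_j$-subgroups.
\item It handles the $a_j$ with Lemma \ref{pi}, which only needs the admissible vertex $s$ to be some $b$. For $q=a_1$, take $\pi_1=\{b_1,b_4\}$, $\pi_2=\{a_2,a_3\}$, $v=a_2$, $s=b_2$, $w=b_3$.
\item It shows that no proper subgraph with the same vertex set occurs, which forces $\Phi(G)=1$.
\item It proves the Fitting subgroup is minimal normal.
\item It finishes with Proposition \ref{final} using $a=a_1$, $b=a_2$, $c=b_1$, $d=b_2$. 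Again, only $c$ and $d$ need to be admissible.
\end{enumerate}
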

\begin{proof}
This follows similarly to the series of lemmas used to prove Proposition \ref{prop320}. We omit the full argument here, but using the graph of $\SZ{3}{3}$ as seen in Figure \ref{fig3n0} as reference, one can verify that $b_i$ is strongly admissible for all $1\leq i\leq6$, and therefore there is no normal nonabelian Sylow $b_i$-subgroup due to Lemma \ref{strong}. Moreover, following Lemma \ref{pi}, one can show that there is no normal nonabelian Sylow $a_j$-subgroup for all $1\leq j\leq3$. To this end, one can then check that no proper subgraph of $\SZ{3}{3}$ with the same vertex set occurs as the prime character degree graph of any solvable group, all of which can allow us to conclude that the corresponding Frattini subgroup is trivial. One then proceeds to check that the Fitting subgroup is minimal normal, thereby setting up the final contradiction by way of Proposition \ref{final}, employing the vertices $a=a_1$, $b=a_2$, $c=b_1$, and $d=b_2$.
\end{proof}

\subsection{The case of $k\geq4$}%

For graphs with $k\geq4$, we will show that they do not occur by showing every vertex is admissible. We break this into two arguments, the first (Lemma \ref{lemk10}) being the base case of an induction argument, and the second (Proposition \ref{propkn0}) being the inductive step. One can see examples of these graphs in Figure \ref{figk10} and Figure \ref{figkn0}, respectively.

\begin{figure}[htb]
    \centering
$
\begin{tikzpicture}[scale=2]
\node (xa1) at (0,0.8) {$a_1$};
\node (xa2) at (0,0.2) {$a_2$};
\node (xa3) at (-0.5,1) {$a_3$};
\node (xa4) at (-0.5,0) {$a_4$};
\node (xb1) at (1.5,0.8) {$b_1$};
\node (xb2) at (1.5,0.2) {$b_2$};
\node (xb3) at (1,1) {$b_3$};
\node (xb4) at (1,0) {$b_4$};
\node (xb5) at (0.5,0.5) {$b_5$};
\path[font=\small,>=angle 90]
(xa1) edge node [right] {$ $} (xa2)
(xa1) edge node [right] {$ $} (xa3)
(xa1) edge node [right] {$ $} (xa4)
(xa2) edge node [right] {$ $} (xa3)
(xa2) edge node [right] {$ $} (xa4)
(xa3) edge node [right] {$ $} (xa4)
(xa1) edge node [right] {$ $} (xb1)
(xa2) edge node [right] {$ $} (xb2)
(xa3) edge node [right] {$ $} (xb3)
(xa4) edge node [right] {$ $} (xb4)
(xa1) edge node [right] {$ $} (xb5)
(xb1) edge node [right] {$ $} (xb2)
(xb1) edge node [right] {$ $} (xb3)
(xb1) edge node [right] {$ $} (xb4)
(xb1) edge node [right] {$ $} (xb5)
(xb2) edge node [right] {$ $} (xb3)
(xb2) edge node [right] {$ $} (xb4)
(xb2) edge node [right] {$ $} (xb5)
(xb3) edge node [right] {$ $} (xb4)
(xb3) edge node [right] {$ $} (xb5)
(xb4) edge node [right] {$ $} (xb5);
\node (ya1) at (3.25,0.5) {$a_1$};
\node (ya2) at (2.75,1) {$a_2$};
\node (ya3) at (2.75,0) {$a_3$};
\node (ya4) at (2.25,.8) {$a_4$};
\node (ya5) at (2.25,.2) {$a_5$};
\node (yb1) at (4.75,0.8) {$b_1$};
\node (yb2) at (4.25,1) {$b_2$};
\node (yb3) at (4.25,0) {$b_3$};
\node (yb4) at (3.75,.8) {$b_4$};
\node (yb5) at (3.75,.2) {$b_5$};
\node (yb6) at (4.75,0.2) {$b_6$};
\path[font=\small,>=angle 90]
(ya1) edge node [right] {$ $} (ya2)
(ya1) edge node [right] {$ $} (ya3)
(ya1) edge node [right] {$ $} (ya4)
(ya1) edge node [right] {$ $} (ya5)
(ya2) edge node [right] {$ $} (ya3)
(ya2) edge node [right] {$ $} (ya4)
(ya2) edge node [right] {$ $} (ya5)
(ya3) edge node [right] {$ $} (ya4)
(ya3) edge node [right] {$ $} (ya5)
(ya4) edge node [right] {$ $} (ya5)
(ya1) edge node [right] {$ $} (yb1)
(ya1) edge node [right] {$ $} (yb6)
(ya2) edge node [right] {$ $} (yb2)
(ya3) edge node [right] {$ $} (yb3)
(ya4) edge node [right] {$ $} (yb4)
(ya5) edge node [right] {$ $} (yb5)
(yb1) edge node [right] {$ $} (yb2)
(yb1) edge node [right] {$ $} (yb3)
(yb1) edge node [right] {$ $} (yb4)
(yb1) edge node [right] {$ $} (yb5)
(yb1) edge node [right] {$ $} (yb6)
(yb2) edge node [right] {$ $} (yb3)
(yb2) edge node [right] {$ $} (yb4)
(yb2) edge node [right] {$ $} (yb5)
(yb2) edge node [right] {$ $} (yb6)
(yb3) edge node [right] {$ $} (yb4)
(yb3) edge node [right] {$ $} (yb5)
(yb3) edge node [right] {$ $} (yb6)
(yb4) edge node [right] {$ $} (yb5)
(yb4) edge node [right] {$ $} (yb6)
(yb5) edge node [right] {$ $} (yb6);
\end{tikzpicture}
$
    \caption{The graphs $\SZ{4}{1}$ and $\SZ{5}{1}$}
    \label{figk10}
\end{figure}

\begin{lemma}\label{lemk10}
The graph $\SZ{k}{1}$ does not occur as the prime character degree graph of any solvable group for all $k \geq 4$.
\end{lemma}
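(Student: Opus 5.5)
We show that every vertex of $\SZ{k}{1}$ is admissible when $k\geq4$, so that Theorem \ref{admissible} rules the graph out. Fix $k\geq4$. The graph consists of the clique $A=\{a_1,\ldots,a_k\}$ and the clique $B=\{b_1,\ldots,b_{k+1}\}$. The only cross edges are $\epsilon(a_i,b_i)$ for $1\leq i\leq k$, together with $\epsilon(a_1,b_{k+1})$. Up to symmetry there are four kinds of vertex: $a_1$, a generic $a_i$ with $i\geq2$, the pair $b_1,b_{k+1}$, and a generic $b_i$ with $2\leq i\leq k$.

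\emph{Edges inside a clique.} Removing $\epsilon(a_i,a_j)$ violates P\'alfy's condition (Lemma \ref{PC}) with the trio $a_i$, $a_j$, $b_l$, where $l\notin\{i,j\}$ and $l\neq k+1$; such an $l$ exists because $k\geq3$. Likewise, removing $\epsilon(b_i,b_j)$ violates P\'alfy's condition with $b_i$, $b_j$, $a_l$, where $a_l$ is adjacent to neither $b_i$ nor $b_j$; again $k\geq4$ leaves room to choose $l$. So only the cross edges need real work.

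\emph{Cross edges.} If we delete any nonempty proper subset of the cross edges, and the resulting graph is still connected, it is a proper connected subgraph of $\Gamma_{k+1,k}$ with the same vertex set. Here the $B$ side has $k+1$ vertices and the $A$ side has $k$, with $a_1$ matched to one of $b_1,b_{k+1}$. By the addendum of Theorem \ref{KT} this subgraph does not occur, since $k+1\geq k\geq2$. Note that $\SZ{k}{1}$ itself is $\Gamma_{k+1,k}$ with one extra cross edge at $a_1$. Deleting $\epsilon(a_1,b_1)$ gives exactly $\Gamma_{k+1,k}$, which does not occur. Deleting both cross edges at $a_1$ leaves a connected proper subgraph of $\Gamma_{k+1,k}$, which is also covered. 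Deleting the single edge $\epsilon(a_i,b_i)$ for $i\geq2$ leaves a graph that still contains both $\epsilon(a_1,b_1)$ and $\epsilon(a_1,b_{k+1})$, so it is not obviously a subgraph of $\Gamma_{k+1,k}$. For this case we plan to use a diameter argument: a longest shortest path such as $a_i\to a_1\to b_1\to b_i$ has length three, and Theorem \ref{diameter}\eqref{diam3} should give a contradiction. If that bound is not sharp enough, we fall back on the iterated normal-Sylow argument from Proposition \ref{propk1mL}.

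\emph{Vertex deletions.}
\begin{enumerate}[(i)]
\item $\SZ{k}{1}[b_1]$ is $\Gamma_{k,k}$, and $\SZ{k}{1}[b_i]$ for $2\leq i\leq k$ is isomorphic to $\Gamma_{k,k}$ with an additional pendant-type structure. More precisely, $a_i$ loses its partner, and $a_1$ still has two partners. For $b_1$ this is $\Gamma_{k,k}$ directly, which does not occur by Theorem \ref{KT}.
\item For $b_i$ with $2\leq i\leq k$, we expect the diameter-three argument used for $a_2$ in Proposition \ref{propk1mL} to apply. Sass's partition puts $\rho_3$ inside $A$, and iterating Theorem \ref{diameter}\eqref{diam4} over two quotients forces two normal Sylow subgroups for primes in $\rho_3$, which is impossible.
\item $\SZ{k}{1}[a_1]$ is $\Gamma_{k+1,k-1}$, which does not occur by Theorem \ref{KT}.
\item $\SZ{k}{1}[a_i]$ for $i\geq2$ has diameter three. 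It is handled like $\SLi{v+1}{1}{(t+1)}[a_2]$ in Proposition \ref{propk1mL}, either through that argument or directly through the counting in Theorem \ref{diameter}, since $|\rho_3\cup\rho_4|$ is small relative to $2^{|\rho_1\cup\rho_2|}$.
\end{enumerate}

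The main obstacle is the diameter-three graphs: those obtained by deleting $a_i$, $b_i$ (for $2\leq i\leq k$), or a single cross edge. For each of them we must check carefully that either inequality \eqref{diam3} fails or the two-step normal-Sylow contradiction of Proposition \ref{propk1mL} goes through. Once every vertex is admissible, Theorem \ref{admissible} gives the result.
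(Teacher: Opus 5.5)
Your overall strategy matches the paper's: show every vertex is admissible and then apply Theorem \ref{admissible}. Your treatment of the clique edges, of the cross edges (including the diameter-three argument for deleting a single $\epsilon(a_i,b_i)$), and of the deletions of $a_1$ and $b_1$ is also the paper's. The gap is in vertex deletions (ii) and (iv). Both $\SZ{k}{1}[a_i]$ for $2\leq i\leq k$ and $\SZ{k}{1}[b_i]$ for $2\leq i\leq k$ have diameter two, not three.
\begin{itemize}
\item In $\SZ{k}{1}[a_i]$, the only vertex without a cross-neighbour is $b_i$. It reaches each remaining $a_j$ in two steps through that vertex's partner in $B$ ($b_j$, or $b_1$ when $j=1$). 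Every other non-adjacent pair $a_j,b_l$ is joined through $a_j$'s partner.
\item Symmetrically, in $\SZ{k}{1}[b_i]$ the unmatched vertex $a_i$ reaches each remaining $b_l$ through $b_l$'s partner in $A$.
\end{itemize}
So Theorem \ref{diameter} does not apply to these graphs, and the two-quotient Sylow argument from Proposition \ref{propk1mL} does not transfer. There, distance three came from the $c$-vertices hanging off $A$, and these are absent when $m=0$. Your description of $\SZ{k}{1}[b_i]$ as $\Gamma_{k,k}$ with extra structure is also inaccurate. Its cross edges contain no perfect matching, since $a_i$ has no cross-neighbour and $a_1$ has two.

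The missing idea is to read the unmatched vertex as a $c$-vertex:
\begin{itemize}
\item $\SZ{k}{1}[a_i]\cong\SR{k-1}{1}$, with $b_i$ playing $c_1$ (it is adjacent exactly to the rest of $B$);
\item $\SZ{k}{1}[b_i]\cong\SL{k-1}{1}$, with $a_i$ playing $c_1$.
\end{itemize}
Theorems \ref{Rfam} and \ref{Lfam} then give non-occurrence. This is exactly where the hypothesis $k\geq4$ enters: Theorem \ref{Rfam} excludes $\SR{k-1}{1}$ only when $k-1\geq3$, because $\SR{2}{1}$ is unclassified.

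Your argument never genuinely uses $k\geq4$; the P\'alfy choices need only $k\geq3$. So if your step (iv) were valid, it would also show that $\SZ{3}{1}[a_2]\cong\SR{2}{1}$ does not occur, which the paper lists as open. That alone signals that a diameter-based argument cannot work there.
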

\begin{proof}
Let $k\in\mathbb{N}$ such that $k\geq 4$. We aim to show that the graph $\SZ{k}{1}$ does not occur as the prime character degree graph of any solvable group. We will do this by showing that every vertex is admissible.

First, we consider the vertex $a_1$. Observe that the graph induced by the loss of $a_1$ and all incident edges is isomorphic to the graph $\Gamma_{k+1,k-1}$, which is known not to occur by Theorem \ref{KT}. Next, we consider removing edges incident to $a_1$. The loss of the edge $\epsilon(a_1,a_i)$ for any $2\leq i\leq k$ results in a graph which does not occur by P\'alfy's condition. One can see this by using the trio $a_1$, $a_i$, and $b_j$ for some $2\leq j\leq k$ with $i\neq j$. Hence, the edge between $a_1$ and $a_i$ cannot be lost for all $2\leq i\leq k$. Furthermore, losing one or both of the edges $\epsilon(a_1,b_1)$ and $\epsilon(a_1,b_{k+1})$ results in the graph $\Gamma_{k+1,k}$ (or a proper connected subgraph of it with the same vertex set), and cannot occur again by Theorem \ref{KT}. Hence, $a_1$ is admissible.

Next, we consider the prime $a_2$. Observe that removing $a_2$ and all incident edges results in the graph $\SZ{k+1}{1}[a_2]=\SR{k-1}{1}$ which does not occur by Theorem \ref{Rfam}. Therefore, the first part of Definition \ref{adstrong} is satisfied. As for the second part, notice that the edge $\epsilon(a_2,a_i)$ cannot be lost for all $1\leq i\leq k$ with $i\neq2$ due to an argument with P\'alfy's condition, as considered above. The only edge that is left to consider losing, therefore, is the one between $a_2$ and $b_2$. Losing this edge yields a graph of diameter three which will violate the main result from \cite{S} (see Theorem \ref{diameter}\eqref{diam3} for example). Hence, $a_2$ is admissible. By symmetry, we also get that $a_i$ is admissible for all $3\leq i\leq k$.

Considering losing the vertex $b_1$ and all incident edges gives the graph $\Gamma_{k,k}$, which is known not to occur by Theorem \ref{KT}. Next, we investigate the loss of edges incident to $b_1$. First, we notice that removing the edge $\epsilon(b_1,b_i)$ for any $2\leq i\leq k+1$ violates P\'alfy's condition with $b_1$, $b_i$, and $a_j$ for some $2\leq j\leq k$ with $i\neq j$. Hence, these edges cannot be lost. Removing the remaining edge between $b_1$ and $a_1$ results in the graph $\Gamma_{k+1,k}$, which does not occur as the prime character degree graph of any solvable group by Theorem \ref{KT}. Thus, $b_1$ is admissible, and by symmetry, so is $b_{k+1}$.

Finally, we consider $b_2$. Losing $b_2$ and all incident edges induces a graph isomorphic to the non-occurring graph $\SL{k-1}{1}$ (see Theorem \ref{Lfam}). This verifies the first part of Definition \ref{adstrong}. As for the second part, the removal of edges incident to $b_2$ (that is, the edges $\epsilon(b_2,b_i)$ for all $1\leq i\leq k+1$ with $i\neq2$, and the edge $\epsilon(b_2,a_2)$) were all considered above and were shown that they cannot be lost. Thus, $b_2$ is admissible. By a symmetrical argument, one can glean that $b_i$ is also admissible for all $3\leq i\leq k$.

Since every vertex is admissible, then, by way of Theorem \ref{admissible}, we can conclude that the graph $\SZ{k}{1}$ does not occur as the prime character degree graph of any solvable group, which was what we wanted.
\end{proof}

As mentioned above, we will be viewing Lemma \ref{lemk10} as a base case of an induction argument. We now turn our attention to letting the index $n$ range, which is $1\leq n\leq k$ (see Figure \ref{figkn0} for examples).

\begin{figure}[htb]
    \centering
$
\begin{tikzpicture}[scale=2]
\node (xa1) at (0,0.85) {$a_1$};
\node (xa2) at (0,0.15) {$a_2$};
\node (xa3) at (-0.5,1) {$a_3$};
\node (xa4) at (-0.5,0) {$a_4$};
\node (xb1) at (1.5,0.85) {$b_1$};
\node (xb2) at (1.5,0.15) {$b_2$};
\node (xb3) at (1,1) {$b_3$};
\node (xb4) at (1,0) {$b_4$};
\node (xb5) at (0.5,0.7) {$b_5$};
\node (xb6) at (0.5,0.3) {$b_6$};
\path[font=\small,>=angle 90]
(xa1) edge node [right] {$ $} (xa2)
(xa1) edge node [right] {$ $} (xa3)
(xa1) edge node [right] {$ $} (xa4)
(xa2) edge node [right] {$ $} (xa3)
(xa2) edge node [right] {$ $} (xa4)
(xa2) edge node [right] {$ $} (xb6)
(xa3) edge node [right] {$ $} (xa4)
(xa1) edge node [right] {$ $} (xb1)
(xa2) edge node [right] {$ $} (xb2)
(xa3) edge node [right] {$ $} (xb3)
(xa4) edge node [right] {$ $} (xb4)
(xa1) edge node [right] {$ $} (xb5)
(xb1) edge node [right] {$ $} (xb2)
(xb1) edge node [right] {$ $} (xb3)
(xb1) edge node [right] {$ $} (xb4)
(xb1) edge node [right] {$ $} (xb5)
(xb1) edge node [right] {$ $} (xb6)
(xb2) edge node [right] {$ $} (xb3)
(xb2) edge node [right] {$ $} (xb4)
(xb2) edge node [right] {$ $} (xb5)
(xb2) edge node [right] {$ $} (xb6)
(xb3) edge node [right] {$ $} (xb4)
(xb3) edge node [right] {$ $} (xb5)
(xb3) edge node [right] {$ $} (xb6)
(xb4) edge node [right] {$ $} (xb5)
(xb4) edge node [right] {$ $} (xb6)
(xb5) edge node [right] {$ $} (xb6);
\node (ya1) at (3.25,0.75) {$a_1$};
\node (ya2) at (3.25,0.25) {$a_2$};
\node (ya3) at (2.75,1) {$a_3$};
\node (ya4) at (2.75,0) {$a_4$};
\node (ya5) at (2.25,.5) {$a_5$};
\node (yb1) at (3.75,0.7) {$b_1$};
\node (yb2) at (3.75,0.3) {$b_2$};
\node (yb3) at (4.25,1) {$b_3$};
\node (yb4) at (4.25,0) {$b_4$};
\node (yb5) at (5.25,.5) {$b_5$};
\node (yb6) at (4.75,.9) {$b_6$};
\node (yb7) at (4.75,.1) {$b_7$};
\path[font=\small,>=angle 90]
(ya1) edge node [right] {$ $} (ya2)
(ya1) edge node [right] {$ $} (ya3)
(ya1) edge node [right] {$ $} (ya4)
(ya1) edge node [right] {$ $} (ya5)
(ya2) edge node [right] {$ $} (ya3)
(ya2) edge node [right] {$ $} (ya4)
(ya2) edge node [right] {$ $} (ya5)
(ya3) edge node [right] {$ $} (ya4)
(ya3) edge node [right] {$ $} (ya5)
(ya4) edge node [right] {$ $} (ya5)
(ya1) edge node [right] {$ $} (yb1)
(ya1) edge node [right] {$ $} (yb6)
(ya2) edge node [right] {$ $} (yb2)
(ya2) edge node [right] {$ $} (yb7)
(ya3) edge node [right] {$ $} (yb3)
(ya4) edge node [right] {$ $} (yb4)
(ya5) edge node [right] {$ $} (yb5)
(yb1) edge node [right] {$ $} (yb2)
(yb1) edge node [right] {$ $} (yb3)
(yb1) edge node [right] {$ $} (yb4)
(yb1) edge node [right] {$ $} (yb5)
(yb1) edge node [right] {$ $} (yb6)
(yb1) edge node [right] {$ $} (yb7)
(yb2) edge node [right] {$ $} (yb3)
(yb2) edge node [right] {$ $} (yb4)
(yb2) edge node [right] {$ $} (yb5)
(yb2) edge node [right] {$ $} (yb6)
(yb2) edge node [right] {$ $} (yb7)
(yb3) edge node [right] {$ $} (yb4)
(yb3) edge node [right] {$ $} (yb5)
(yb3) edge node [right] {$ $} (yb6)
(yb3) edge node [right] {$ $} (yb7)
(yb4) edge node [right] {$ $} (yb5)
(yb4) edge node [right] {$ $} (yb6)
(yb4) edge node [right] {$ $} (yb7)
(yb5) edge node [right] {$ $} (yb6)
(yb5) edge node [right] {$ $} (yb7)
(yb6) edge node [right] {$ $} (yb7);
\end{tikzpicture}
$
    \caption{The graphs $\SZ{4}{2}$ and $\SZ{5}{2}$}
    \label{figkn0}
\end{figure}

\begin{proposition}\label{propkn0}
The graph $\SZ{k}{n}$ does not occur as the prime character degree graph of any solvable group for all $k\geq4$ and $1\leq n\leq k$.
\end{proposition}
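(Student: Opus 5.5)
Fix $k\geq4$ and induct on $n$. The base case $n=1$ is Lemma \ref{lemk10}. For the inductive step, assume $\SZ{k}{n}$ does not occur for some $1\leq n<k$, and show that $\SZ{k}{n+1}$ does not occur by proving that every vertex is admissible and then applying Theorem \ref{admissible}. The vertices split into four symmetry classes, handled in turn:
\begin{itemize}
\item the doubled $a$-vertices $a_1,\ldots,a_{n+1}$;
\item the single $a$-vertices $a_{n+2},\ldots,a_k$;
\item the $b$-vertices $b_1,\ldots,b_{n+1},b_{k+1},\ldots,b_{k+n+1}$ attached to doubled $a$'s;
\item the remaining $b$-vertices $b_{n+2},\ldots,b_k$.
\end{itemize}

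\textbf{Edge removals common to all classes.} Removing an edge inside $A$, say $\epsilon(a_i,a_j)$, fails by P\'alfy's condition applied to $a_i$, $a_j$ and any $b_l$ adjacent to neither. Such a $b_l$ exists because $k\geq4$. Removing an edge $\epsilon(b_i,b_j)$ inside $B$ fails by P\'alfy's condition applied to $b_i$, $b_j$ and some $a_l$ adjacent to neither. For the $A$--$B$ edges, deleting one or both edges from a doubled vertex $a_1$ yields a proper connected subgraph of the corresponding $\Gamma$-graph, or a diameter three graph. I would rule this out either by Theorem \ref{KT} (its addendum) or by Theorem \ref{diameter}\eqref{diam3}. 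In the latter case the small side $\rho_1\cup\rho_2$ contains about $k$ vertices while the other side has fewer than $2^k$, so the inequality fails.

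\textbf{Vertex deletions.}
\begin{itemize}
\item Deleting a doubled-side vertex $b_{k+1}$ gives $\SZ{k}{n}$, which does not occur by the inductive hypothesis.
\item Deleting $b_1$ gives a graph isomorphic to $\SZ{k}{n}$ after relabelling, since $b_1$ and $b_{k+1}$ are symmetric.
\item Deleting $b_{n+2}$ (when $n+2\leq k$) gives a graph in which $a_{n+2}$ still has its single neighbour removed. This graph is $\SR{k-1}{n+1}$ with $a_{n+2}$ playing the role of $c_1$. It does not occur by Theorem \ref{Rfam}, since $k-1\geq3$.
\item Deleting $a_{n+2}$ leaves $b_{n+2}$ inside $B$ with no $A$-neighbour. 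This is $\SLi{k-1}{n+1}{0}$ with an extra $B$-vertex, i.e. the graph $\SL{k-1}{n+1}$ in which $b_{n+2}$ is the $c$-vertex. That does not match directly, so I would instead read it as $\SRi{k-1}{n+1}{1}$, with $b_{n+2}$ acting as a $c$-vertex joined to all of $B$. It then does not occur by Theorem \ref{Rfam}.
\item Deleting a doubled vertex $a_1$ leaves $b_1$ and $b_{k+1}$ attached only to $B$. The resulting graph is $\SRi{k-1}{n}{2}$ (or $\SR{k-1}{n}$-type when $n=1$, as in Lemma \ref{lemk10}), which does not occur by Proposition \ref{propknmR}, since $k-1\geq3$ and $m=2$. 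In the case $n=k-1$, where every $a$ is doubled except one, the same reading applies.
\end{itemize}

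\textbf{Main obstacle.} The delicate step is correctly identifying each vertex-deleted graph with a member of the right family, $\SRi{k-1}{n'}{m}$, with the indices in range ($1\leq n'\leq k-1$). This requires care when $n+1=k$, where there are no single $a$-vertices and $\{b_{n+2},\ldots,b_k\}$ is empty, and when deleting $a_1$ would push $n'$ to $n-1=0$. If $n'=0$ arises, I would fall back on identifying the graph as $\Gamma_{k+n,k-1}$-type and invoking Theorem \ref{KT}. With all vertices admissible, Theorem \ref{admissible} finishes the inductive step.
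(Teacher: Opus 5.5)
Your plan follows the paper's proof closely: induction on $n$, showing every vertex is admissible, the same four symmetry classes, and the same deletions $\SZ{k}{n}$, $\SRi{k-1}{n}{2}$ and $\SR{k-1}{n+1}$. However, the step you give for removing a \emph{single} edge $\epsilon(a_1,b_1)$ (or $\epsilon(a_1,b_{k+1})$) fails, and you need this step for condition (ii) at $a_1$, $b_1$ and $b_{k+1}$.

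In the inductive step $n+1\geq2$, so $a_2,\ldots,a_{n+1}$ still have two neighbours in $B$. Since $k\geq4$, a clique meeting both $A$ and $B$ contains only one $a$-vertex. Hence $A$, $B$ is the only partition of the vertices into two cliques. So the graph is not a subgraph of any $\Gamma_{s,t}$ on the same vertex set, and the addendum of Theorem \ref{KT} says nothing; that reading works only in the base case, Lemma \ref{lemk10}. The graph is not of diameter three either: every $a_i$ still has a neighbour in the clique $B$, so all distances are at most $2$, and Theorem \ref{diameter} does not apply.

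The missing identification is that the graph is $\SR{k}{n}$. Here $b_1$ is the $c$-vertex joined to all of $B\setminus\{b_1\}$, $a_1$ is a single vertex matched with $b_{k+1}$, and $a_2,\ldots,a_{n+1}$ are the $n$ doubled vertices. It is non-occurring by Theorem \ref{Rfam} since $k\geq4$, which is exactly how the paper argues. Only removing both edges at $a_1$ produces a diameter three graph.

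There are also smaller errors.

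\begin{itemize}
\item Deleting $b_{n+2}$ leaves $a_{n+2}$ adjacent to exactly the rest of $A$, i.e.\ attached on the left. So the graph is $\SL{k-1}{n+1}$, and you should cite Theorem \ref{Lfam}. It is not $\SR{k-1}{n+1}$, which contains a clique on $k+n+1$ vertices, whereas this graph's largest clique has $k+n$.
\item You never treat the edge $\epsilon(a_j,b_j)$ for a single vertex $a_j$ with $j\geq n+2$, and it is needed for both $a_j$ and $b_j$. Its removal gives a diameter three graph with sides of sizes $k$ and $k+n+1<2^k$, so Theorem \ref{diameter}\eqref{diam3} disposes of it.
\item Your concern about $n'=0$ does not arise. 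For $n\geq1$, deleting $a_1$ always gives $\SRi{k-1}{n}{2}$ with $1\leq n\leq k-1$, which is in range for Proposition \ref{propknmR}.
\end{itemize}
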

\begin{proof}
We proceed by induction on $n$.
Our base case ($n=1$), has been proven in Lemma \ref{lemk10}. In particular, we have shown that $\SZ{k}{1}$ does not occur as the prime character degree graph of any solvable group for all $k\geq4$.

Next, for our inductive hypothesis: given any integer $w\geq1$, we assume that the graph $\SZ{k}{w}$ does not occur as the prime character degree graph of any solvable group for all $k\geq4$. Of course, this actually holds for all $k\geq\max\{4,w\}$.

We will complete our inductive step by showing that the subsequent graph $\SZ{k}{w+1}$ also does not occur. Following the strategy that we just witnessed in Lemma \ref{lemk10}, we will prove that every vertex is admissible.

First, we let $k\in\mathbb{N}$ such that $k\geq\max\{4,w+1\}$. Second, we notice that the vertex set associated with the graph $\SZ{k}{w+1}$ can be expressed as the disjoint union of $\alpha$, $\overline{\alpha}$, $\beta$, and $\overline{\beta}$ where
\begin{align*}
\alpha&=\{a_1,\ldots,a_{w+1}\},\\
\overline{\alpha}&=\{a_{w+2},\ldots,a_k\},\\
\beta&=\{ b_1,\ldots,b_{w+1},b_{k+1},\ldots,b_{k+w+1}\},\text{~and}\\
\overline{\beta}&=\{b_{w+2},\ldots,b_k\}.
\end{align*}
Furthermore, we notice that every vertex in each set is symmetric with every other vertex in that set. Finally, if indeed $k=w+1$, then both $\overline{\alpha}$ and $\overline{\beta}$ are empty.

We first consider the vertex $a_1$. Losing the vertex $a_1$ and all incident edges yields a graph that is isomorphic to $\SRi{k-1}{w}{2}$, which was proven not to occur in Proposition \ref{propknmR}. We next consider losing edges incident to the prime $a_1$. The edge $\epsilon(a_1,a_i)$ ($2\leq i\leq k$) cannot be lost without violating P\'alfy's condition with the vertices $a_1$, $a_i$, and $b_j$ for some $2\leq j\leq k$ with $i\neq j$. Next, losing either of the edges $\epsilon(a_1,b_1)$ or $\epsilon(a_1,b_{k+1})$ gives us the graph $\SR{k}{w}$, which does not occur by Theorem \ref{Rfam}. Finally, losing both of $\epsilon(a_1,b_1)$ and $\epsilon(a_1,b_{k+1})$ results in a graph of diameter three which is in contradiction to Theorem \ref{diameter}\eqref{diam3}. Hence, $a_1$ is admissible, and by symmetry, so is every vertex in $\alpha$.

Next, we investigate the vertex $a_{w+2}$. The graph resulting from the loss of $a_{w+2}$ and all incident edges is $\SR{k-1}{w+1}$, known not to occur by way of Theorem \ref{Rfam}. Looking at losing edges incident to $a_{w+2}$, we see that the removal of the edge $\epsilon(a_{w+2},a_i)$ for any $1\leq i\leq k$ with $i\neq w+2$ will violate P\'alfy's condition as before, and the loss of the edge $\epsilon(a_{w+2},b_{w+2})$ results in a diameter three graph, which will once again be handled by the main result from \cite{S}. Hence, $a_{w+2}$ is admissible, and by symmetry, so is every vertex in $\overline{\alpha}$.

We next consider the vertex $b_1$, whose removal along with incident edges yields the graph $\SZ{k}{w}$, which does not occur by our inductive hypothesis as stated above. One can also see that edges incident to $b_1$ cannot be lost. In fact, the removal of the edge $\epsilon(b_1,b_i)$ for any $2\leq i\leq k+w+1$ results in a graph that does not occur due to P\'alfy's condition with $b_1$, $b_i$, and $a_j$ for some $2\leq j\leq k$ with $j \not\in\{i,i-k\}$. Furthermore, the edge $\epsilon(b_1,a_1)$ was proven essential previously. Thus, $b_1$ is admissible. Again due to symmetry, we get that every vertex in $\beta$ is also admissible.

Finally, we consider $b_{w+2}$. Induced by the loss of $b_{w+2}$ and all incident edges, the resulting graph $\SL{k-1}{w+1}$ does not occur by Theorem \ref{Lfam}. For the second part of Definition \ref{adstrong}, we see that the loss of edges incident to $b_{w+2}$ will either violate P\'alfy's as above or will result in a violation of a diameter three graph. Regardless, we see that we cannot lose those edges, and therefore we conclude that $b_{w+2}$ is admissible. We therefore get that every vertex in $\overline{\beta}$ is admissible by a symmetric argument.

Since every vertex is admissible, then the graph $\SZ{k}{w+1}$ does not occur by Theorem \ref{admissible}. Hence, by induction, the graph $\SZ{k}{n}$ does not occur as the prime character degree graph of any solvable group for all $k\geq4$ and $1\leq n\leq k$.
\end{proof}

Uniting all our results from this section, we are finally able to state our main theorem for the family $\{\SZ{k}{n}\}$.

\begin{thmC}
The graph $\SZ{k}{n}$ occurs as the prime character degree graph of a solvable group when $(k,n)\in\{(1,1), (2,1), (2,2)\}$. Otherwise, $\SZ{k}{n}$ does not occur as the prime character degree graph of any solvable group.
\end{thmC}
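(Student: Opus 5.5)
Theorem C follows by assembling the results of this section, organized by the value of $k$; because $1\leq n\leq k$, this split covers every pair $(k,n)$ exactly once.

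For $k\in\{1,2\}$, the only possible pairs are $(1,1)$, $(2,1)$ and $(2,2)$. Proposition \ref{prop1and20} shows that all three graphs occur. The reasons are that $\SZ{1}{1}=K_3$ (see \cite{Z}), that $\SZ{2}{1}$ is Lewis's occurring five-vertex graph from \cite{L3}, and that $\SZ{2}{2}$ is realized by a direct product in \cite{BLL}. This gives the positive half of the statement.

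For $k=3$, there are three pairs to handle. Proposition \ref{prop310} (from \cite{LMS}) settles $n=1$. Proposition \ref{prop320} settles $n=2$; it depends on Lemmas \ref{lem32ad}, \ref{lem32pi}, \ref{lem32sub} and \ref{lem32min}, which feed into Proposition \ref{final}. Corollary \ref{cor330} settles $n=3$ by the same chain. For $k\geq4$, Proposition \ref{propkn0} covers all $1\leq n\leq k$. It inducts on $n$, with Lemma \ref{lemk10} as the base case, and in each step shows that every vertex is admissible and then applies Theorem \ref{admissible}.

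No new argument is needed; the task is bookkeeping. The point that deserves the most care, and the one most likely to hide a gap, is the inductive hypothesis in Proposition \ref{propkn0}. It must be applied only to graphs $\SZ{k}{w}$ that are actually defined, which requires $w\leq k$ and $k\geq4$. The vertex deletions used in that proof reduce to $\SRi{k-1}{w}{2}$, $\SR{k-1}{w+1}$ and $\SL{k-1}{w+1}$, and for these I will check that the indices satisfy the hypotheses of Proposition \ref{propknmR}, Theorem \ref{Rfam} and Theorem \ref{Lfam}. In particular, $k-1\geq3$ keeps us away from the exceptional and open cases $(2,1)$ and $(2,2)$ of Theorem \ref{Rfam}. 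Once this is confirmed, the three cases together give Theorem C.
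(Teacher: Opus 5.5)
Your proposal is correct and follows the paper's proof exactly. Theorem C is assembled from Proposition \ref{prop1and20} for $k\in\{1,2\}$, from Propositions \ref{prop310} and \ref{prop320} with Corollary \ref{cor330} for $k=3$, and from Proposition \ref{propkn0} for $k\geq4$; your added check that the reductions in Proposition \ref{propkn0} stay in valid index ranges (in particular $k-1\geq3$, which avoids the open cases $\SR{2}{1}$ and $\SR{2}{2}$) is sound but goes beyond what the paper's one-line proof does.
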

\begin{proof}
Combine the result from Proposition \ref{prop1and20}, which handles the cases of $k=1$ and $k=2$, along with the results from Propositions \ref{prop310} and \ref{prop320} and Corollary \ref{cor330} (the case of $k=3$), and the result from Proposition \ref{propkn0} (the case of $k\geq4$).
\end{proof}

\end{document}